\documentclass[12pt]{amsart}

\usepackage[top=1in, bottom=1in, left=1in, right=1in]{geometry}

\makeatletter
\renewcommand\part{\@startsection{part}{2}%
	\z@{0.5\linespacing\@plus1\linespacing}{\linespacing}%
	{\normalfont\large\scshape\bfseries\centering}}
\makeatother

%% PACKAGES %%

\usepackage[utf8]{inputenc}
\usepackage{amssymb,amsthm,amsmath,amsxtra,bbm}
\usepackage{enumitem,fancyhdr}
\usepackage{mathtools}
\usepackage{expl3}

\usepackage{times}
\usepackage{bm}
\usepackage{bbm}
\usepackage{scrextend}
\usepackage{mathrsfs}

\usepackage[table,dvipsnames]{xcolor}
\usepackage{color}
\definecolor{red}{rgb}{1,0,0}
\definecolor{orange}{rgb}{0.7,0.3,0}
\definecolor{blue}{rgb}{0,.3,.7}
\definecolor{green}{rgb}{0,.6,.4}

\PassOptionsToPackage{hyphens}{url}\usepackage[colorlinks=true, linkcolor=NavyBlue, citecolor=teal, urlcolor=gray]{hyperref}   %% linkable cross-refs

\urlstyle{rm}

%% COMMANDS %%

\newcommand{\NN}{\mathbb{N}}
\newcommand{\ZZ}{\mathbb{Z}}

\newcommand{\RR}{\mathbb{R}}
\newcommand{\CC}{\mathbb{C}}
\newcommand{\PP}{\mathbb{P}}
\newcommand{\EE}{\mathbb{E}}

\newcommand{\boldalpha}{\boldsymbol{\alpha}}
\newcommand{\bolddelta}{\boldsymbol{\delta}}
\newcommand{\boldrho}{\boldsymbol{\rho}}

\newcommand{\dee}{\,\mathrm{d}}

\newcommand{\one}{\mathbbm{1}}

\newcommand{\BC}{\mathcal{B}}
\newcommand{\EC}{\mathcal{E}}

\newcommand{\RS}{\mathscr{R}}

\newcommand{\prob}[1]{\PP[ #1 ]}

\newcommand{\bigoo}[1]{O\!\left(#1 \right)}

\renewcommand{\hat}{\widehat}
\renewcommand{\tilde}{\widetilde}
\newcommand{\abs}[1]{\left\lvert #1 \right\rvert}

\newcommand{\floor}[1]{\left\lfloor #1 \right\rfloor}

\newcommand{\riemint}[4]{\int_{#1}^{#2}\! #3 \, \mathrm{d}#4}

\newcommand{\maxnorm}[1]{\left\| #1 \right\|_{\infty}}

\renewcommand{\Re}{\text{Re}}

\newcommand{\dtv}{\mathrm{d}_{\mathrm{TV}}}
\newcommand{\dw}{\mathrm{d}_{\mathrm W}}

\renewcommand{\le}{\leqslant}
\renewcommand{\ge}{\geqslant}

\renewcommand{\epsilon}{\varepsilon}

\newcommand{\pextra}{P_{\mathtt{extra}}}
\newcommand{\parratia}{P_{\mathtt{Arratia}}}

\newcommand{\simplex}{\Delta^{k-1}}

\theoremstyle{plain}
\newtheorem{cor}{Corollary}[section]
\newtheorem{lem}[cor]{Lemma}
\newtheorem{prop}[cor]{Proposition}

\newtheorem{theo}{Theorem}

\theoremstyle{definition}
\newtheorem{example}{Example}
\newtheorem{deff}[cor]{Definition}

\theoremstyle{remark}

\newtheorem*{rems}{Remark}
\newtheorem*{remss}{Remarks}

\makeatletter
\ams@newcommand{\multiint}[1]{\DOTSI\protect\MultiIntegral{#1}}
\renewcommand{\MultiIntegral}[1]{%
	\edef\ints@c{\noexpand\intop
		\ifnum#1=\z@\noexpand\intdots@\else\noexpand\intkern@\fi
		\replicate{#1-2}{\noexpand\intop\noexpand\intkern@}%
		\noexpand\intop
		\noexpand\ilimits@
	}%
	\futurelet\@let@token\ints@a
}
\makeatother
\ExplSyntaxOn
\cs_new:Npn \replicate #1 #2 { \prg_replicate:nn { #1 } { #2 } }
\ExplSyntaxOff

%% Hyperlinks %%

\usepackage[labelfont=bf, width=\linewidth]{caption}

\usepackage{hypcap}   % Corrige la position du lien pour les images
\usepackage{bookmark} % Remédie à des petits problème
% de <hyperref> (important qu'il
% apparaisse APRÈS <hyperref>)

\DeclareMathOperator{\vard}{Var}

\DeclareMathOperator{\Dir}{Dir}

\usepackage{subfiles}
\usepackage{xr} 

\numberwithin{equation}{section}

\newcommand{\nn}{\nonumber \\}

\newcommand{\bg}{\big}
\newcommand{\bgg}{\Big}
\newcommand{\bggg}{\bigg}
\newcommand{\bgggg}{\Bigg}

\newcommand{\Rnt}{R_{\mathrm{NT}}}
\newcommand{\Rprob}{R_{\mathrm{PD}}}
\newcommand{\ECnt}{\EC_{\mathrm{NT}}}
\newcommand{\ECprob}{\EC_{\mathrm{PD}}}

\newcommand{\Bnt}{\mathcal{B}_{\mathrm{NT}}}
\newcommand{\Bprob}{\mathcal{B}_{\mathrm{PD}}}

%% DOCUMENT %%

\title{On Arratia's coupling and the Dirichlet law for the factors of a random integer}

\author{Tony Haddad}
\address{D\'epartement de math\'ematiques et de statistique\\
	Universit\'e de Montr\'eal\\
	CP 6128 succ. Centre-Ville\\
	Montr\'eal, QC H3C 3J7\\
	Canada}
\email{{\tt tony.haddad@umontreal.ca}}

\author{Dimitris Koukoulopoulos}
\address{D\'epartement de math\'ematiques et de statistique\\
	Universit\'e de Montr\'eal\\
	CP 6128 succ. Centre-Ville\\
	Montr\'eal, QC H3C 3J7\\
	Canada}
\email{{\tt dimitris.koukoulopoulos@umontreal.ca}}

\date{\today}

\begin{document}
	%%%% abstract %%%%
	
	\begin{abstract}
		Let $x\ge2$, let $N_x$ be an integer chosen uniformly at random from the set $\ZZ \cap [1, x]$, and let $(V_1, V_2, \ldots)$ be a Poisson--Dirichlet process of parameter $1$. We prove that there exists a coupling of these two random objects such that 
		\[
		\EE\, \sum_{i \ge 1} \bg|\log P_i- V_i\log x\bg| \asymp 1,
		\]
		where the implied constants are absolute and $N_x = P_1P_2 \cdots$ is the unique factorization of $N_x$ into primes or ones with the $P_i$'s being non-increasing. This establishes a 2002 conjecture of Arratia, who constructed a coupling for which the left-hand side in the above estimate is $\ll\log\!\log x$, and who also proved that the left-hand side is $\ge 1-o(1)$ for all couplings. In addition, we use our refined coupling to give a probabilistic proof of the Dirichlet law for the average distribution of the integer factorization into $k$ parts proved in 2023 by Leung and we improve on its error term.
	\end{abstract}

	\maketitle

	\section{Introduction}
	\label{sec:intro}
	
	Let $N_x$ be an integer chosen uniformly at random from the set $\ZZ \cap [1, x]$. We may then factor it uniquely as $N_x = P_1P_2 \cdots$ with the $P_i$'s forming a non-increasing sequence of primes or ones. In 1972, Billingsley \cite{Bill72} showed that, for any fixed positive integer $r$, the joint distribution of the random vector 
	\[
		\bggg(\frac{\log P_1}{\log x}, \ldots, \frac{\log P_r}{\log x}\bggg)
	\]
	converges in distribution as $x \to \infty$ to the first $r$ components of the Poisson--Dirichlet distribution (of parameter 1). 
	
	There are many ways to define the Poisson--Dirichlet distribution. One of the most intuitive ones involves a ``stick-breaking'' process that we will use throughout the paper. We start by sampling a sequence of i.i.d.~random variables $(U_i)_{i \ge 1}$ that are all uniformly distributed in $[0, 1]$. We then define the sequence $(L_i)_{i \ge 1}$ in the following way:
	\[
		L_1 \coloneqq U_1 \quad \text{and}\quad  L_j \coloneqq U_j \prod_{i=1}^{j-1} (1-U_i) \quad \text{for $j \ge 2$.}
	\] 
	The distribution of the process $\mathbf L = (L_1, L_2, \ldots)$ is called the \textit{GEM distribution} (of parameter 1), named after Griffiths, Engen and McCloskey (see \cite[Chapter 41]{JKB} for a discussion on the history of this distribution). Lastly, we sort the components of $\mathbf L$ in non-increasing order to create $\mathbf V = (V_1, V_2, \ldots)$. The distribution of this process is the \textit{Poisson--Dirichlet distribution} (of parameter 1)\footnote{The GEM and Poisson--Dirichlet distributions have more general definitions involving typically a parameter $\theta$. In the rest of the paper, we will not be mentioning the parameter since we will always work with $\theta = 1$.}. We note that both $\sum_{i \ge 1} L_i$ and $\sum_{i \ge 1} V_i$ are equal to $1$ almost surely.
	
	In 2000, Tenenbaum \cite{Tenenbaum00} studied the rate of convergence in Billingsley's theorem by providing an asymptotic series for the difference between the cumulative distribution functions of $\bg(\tfrac{\log P_1}{\log x}, \ldots, \tfrac{\log P_r}{\log x}\bg)$ and of $(V_1, \ldots, V_r)$.
	
	Another way to give a quantitative version of Billingsley's result is by constructing a \emph{coupling} of $N_x$ and $\mathbf V$, i.e. a single probability space over which lives copies of $N_x$ and $\mathbf V$, such that the expectation
	\begin{equation}
		\label{eq:expected-l1-dist}
		\EE\sum_{i \ge 1} \bg|\log P_i - V_i\log x\bg|
	\end{equation}
	is bounded by a positive monotone function that is $o(\log x)$ as $x \to \infty$. The random variables $N_x$ and $\mathbf V$ must be strongly correlated in this new probability space to achieve this. Indeed, if, for instance, $P_1$ and $V_1$ were independent, then we would have that $P_1\le x^{1/3}$ and $V_1>2/3$ can happen at the same time with positive probability, which implies $\bg|\log P_1 - V_1\log x\bg| \ge \frac{\log x}{3}$ with positive probability. Hence, a coupling with $N_x$ and $\mathbf V$ being independent (also called a \emph{trivial coupling}) makes \eqref{eq:expected-l1-dist} $\asymp \log x$. 
	
	In 2002, Richard Arratia \cite{Arratia02} constructed a coupling satisfying
	\begin{equation}
		\label{eq:arratia02-result}
		\EE\sum_{i \ge 1} \bg|\log P_i - V_i \log x \bg| \ll \log\!\log x
	\end{equation}
	for all $x \ge 3$. Moreover, he conjectured that there is a coupling for $N_x$ and $\mathbf V$ with the expectation above being $O(1)$. The main goal of this paper is to prove this conjecture:
	
	\begin{theo}
	\label{thm:coupling-ub} 
	There is a coupling of $N_x$ and $\mathbf V$ satisfying
	\[
		\EE\, \sum_{i \ge 1} \bg|\log P_i - V_i\log x\bg| \asymp 1
	\]
	for all $x \ge 1$.
	\end{theo}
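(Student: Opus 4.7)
The plan is to construct the coupling inductively, driven by the stick-breaking variables $(U_i)_{i\ge 1}$ that generate the GEM process $\mathbf L$. Since sorting $\mathbf L$ gives $\mathbf V$, it suffices to couple the $L_i$'s with the \emph{size-biased} permutation of the primes $P_1,P_2,\ldots$; the $\ell^1$ error is invariant under this common reordering. Concretely, I would build a random integer $M_x$ recursively: set $Y_1 \coloneqq \log x$, use $U_1$ to select a prime $q_1$ with $\log q_1$ close to $U_1 Y_1$, then set $Y_2 \coloneqq Y_1 - \log q_1$ and repeat with $U_2$, continuing until the remaining budget is exhausted. The target identity to aim for is $\log q_i \approx U_i Y_i = L_i\log x + (\text{small})$, which, summed, controls $\sum_i|\log P_i - V_i\log x|$ because the sorted version of $(\log q_i)$ matches the sorted version of $(L_i\log x)$. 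To keep $M_x$ close to uniform on $\ZZ\cap[1,x]$, the one-step sampler must respect the conditional distribution of the size-biased-chosen prime factor, namely that if $N_y$ is uniform in $[1,y]$ and $p$ is a prime factor of $N_y$ chosen with probability proportional to $\log p$, then $\log p/\log y$ is (approximately) uniform on $[0,1]$. After coupling $(N_y/p)$ with a fresh uniform integer in $[1,y/p]$, we are reduced to the same problem at a smaller scale.

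The per-step coupling has two regimes. When $Y_i$ is large, say $Y_i \ge (\log\log x)^{C}$, the prime number theorem in short intervals (in the range where unconditional results suffice) allows us to find a prime $q_i$ in a window around $e^{U_iY_i}$ of length $O(e^{U_iY_i}/Y_i^A)$, yielding per-step error $O(Y_i^{-A})$. Since $Y_i$ decreases roughly geometrically (in expectation by factor $1-U_i$), the number of indices spent in the large-$Y$ regime is $O(\log\log x)$ but the errors form a fast-decaying series with total contribution $O(1)$ — this is the cheap part. The expected single-term analysis should use the precise total-variation bounds between the uniform distribution on $\ZZ\cap[1,y]$ conditioned on its size-biased prime factor, and the product measure suggested by PD; quantitative Dickman-type estimates (e.g.\ Tenenbaum's expansion) should provide what is needed.

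The main obstacle is the small-$Y$ regime, where Arratia's per-step $O(1)$ losses accumulate to $\log\log x$. To bring this down to $O(1)$, my plan is to abandon a strict one-prime-per-step matching once $Y_i$ drops below a fixed threshold $Y_\star$, and instead couple the \emph{aggregate} contribution of small primes in a single move. On the integer side, the remaining factor is a $y_\star$-smooth integer distributed (approximately) as a uniform integer in $[1,e^{Y_i}]$; on the PD side, the remaining tail $(L_j)_{j\ge i}$ has expected total mass $Y_i$, and by Poisson--Dirichlet universality its joint distribution is governed by the same Dickman density. Thus one can hope to couple the two tails so that the total $\ell^1$ discrepancy over \emph{all} remaining steps is $O(1)$, absorbing the accumulation that hurt Arratia. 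A clean way to implement this is to quantify the statement that the joint law of $(\log P_i/\log x)_{i\ge i_0}$ and $(V_i)_{i\ge i_0}$, conditional on the large components, are within total variation $O(1/\log x)$ of one another on the scale $Y_\star$, then invoke the maximal (optimal transport) coupling for this tail. Matching the uniformity of $M_x$ with the uniform law of $N_x$, in addition, should follow by a rejection/reweighting argument that contributes only $O(1)$ to the expectation, since the total-variation defect is summable along the recursion.
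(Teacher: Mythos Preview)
Your proposal is a sketch, not a proof, and the gap is concentrated exactly where you say the difficulty lies. In the large-$Y$ regime your plan is essentially Arratia's, and you correctly note that the accumulated error there is harmless. But in the small-$Y$ regime you replace an argument by a wish: ``couple the two tails so that the total $\ell^1$ discrepancy over all remaining steps is $O(1)$'', justified by an unproven claim that the conditional joint laws are within total variation $O(1/\log x)$ at scale $Y_\star$. That claim \emph{is} the theorem at the tail scale; invoking optimal transport does not help unless you already know the TV bound, and you give no mechanism for establishing it. Likewise, ``a rejection/reweighting argument that contributes only $O(1)$'' for matching $M_x$ to the uniform law on $\ZZ\cap[1,x]$ is not an argument: quantifying that defect is precisely the content of the main proposition in the paper, and it takes several sections.

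The paper does not recurse at all. It maps the GEM components $L_2,L_3,\ldots$ directly to prime powers via a fixed step function $h$ (so $Q_i=e^{h(L_i\log x)}$), forms $J_x=\prod_{i\ge2}Q_i$, and then inserts a single extra prime $\pextra\le x/J_x$ to form $M_x=J_x\pextra$. The entire improvement over Arratia --- from $O(\log\log x)$ to $O(1)$ --- comes from one concrete change: $\pextra$ is drawn with probability proportional to $\log p$ (i.e.\ $\PP[\pextra=p\mid J_x=j]=\one_{p\le x/j}\log p/\theta(x/j)$) rather than uniformly. This specific weighting is what makes $\PP[M_x\ne N_x]=\dtv(M_x,N_x)\ll 1/\log x$ go through, via the identity $\sum_{p\mid n}\log p = \log n + O(\log s(n))$; with Arratia's uniform extra prime one cannot do better than $\log\log x/\log x$. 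Your recursive scheme contains no analogue of this device, and without it there is no reason to expect the TV defect in your ``reweighting'' step to be summable. If you want to salvage the inductive approach, you would need, at each step, a sampler whose cumulative TV error telescopes to $O(1/\log x)$ --- and identifying such a sampler is the actual content of the result.
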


	\begin{remss}
	(a) Theorem \ref{thm:coupling-ub} is optimal in the sense that no coupling of $N_x$ and $\mathbf V$ can make this expectation tend to $0$ as $x \to \infty$. This follows directly from the triangle inequality:
	\[
		\EE \sum_{i \ge 1} \bg|\log P_i - V_i\log x\bg| \ge \EE \bgg|\sum_{i \ge 1} \bg(\log P_i - V_i\log x\bg)\bgg| = \EE \bg[\log(x/N_x)\bg] = 1-o(1).
	\]
%	
%	 $\log P_i$ can never take any value in $(0, \log 2)$, thus we have 
%		\[
%			\EE \sum_{i \ge 1} \bg|\log P_i - V_i\log x\bg|  \ge \frac{\log 2}{3}\cdot \EE \#\bggg\{i \ge 1 : \frac{\log2}{3\log x}\le V_i\le \frac{2\log2}{3\log x} \bggg\}
%		\]
%	no matter how we choose the coupling between $N_x$ and $\mathbf V$. However, the expectation on the right-hand side is exactly equal to $\log 2$ for any choice of $x \ge 2$.
	\medskip
	
	(b) If $X$ and $Y$ are two random variables taking values in a separable metric space $(S,\mathrm{d})$, %such that both $\EE[d(X, z)]$ and $\EE[d(Y,z)]$ are finite for some $z \in S$, 
	their {\it Wasserstein distance} equals
	\begin{equation}
		\label{eq:def wasserstein}
		\dw(X,Y)  \coloneqq \inf \EE\bg[\mathrm{d}(X,Y)\bg],
	\end{equation}
	where the infimum is taken over all possible couplings of $X$ and $Y$. Taking $S$ to be the $\ell^1$ space of real valued sequences with its associated metric, and using remark (a) above, we find that Theorem \ref{thm:coupling-ub} is equivalent to the statement that $\dw(\mathbf{P}_x, \mathbf V) \asymp1/\log x$, where $\mathbf{P}_x \coloneqq \bg(\frac{\log P_1}{\log x}, \frac{\log P_2}{\log x}, \ldots\bg)$.
	
	\medskip
	
	(c) The coupling we construct contains a copy of the random process $(N_x)_{x \ge 1}$. This fact might be useful in applications of Theorem \ref{thm:coupling-ub}. 
	\medskip
	
	(d) Let $\sigma$ be a random permutation uniformly distributed in the permutation group $S_n$. It is well known that the factorization into primes of $N_x$ and the decomposition into disjoint cycles of $\sigma$ share similar statistics when $n \approx \log x$.  In 2006, Arratia, Barbour and Tavaré \cite{ABT06} have proved that there exists a coupling between $\sigma$ and $\mathbf V$ such that
	\begin{equation}
		\label{eq:ABT permutation thm}
		\EE \sum_{i \ge 1} |C_i - n V_i| \sim \frac{\log n}{4},
	\end{equation}
	with $C_i$ being the number of cycles of length $i$ in $\sigma$. They showed that \eqref{eq:ABT permutation thm} was optimal by using the inequality $|C_i - nV_i| \ge \|nV_i\|$ where $\left\|\cdot\right\|$ is the distance to the closest integer, and computing $\EE \sum_{i \ge 1} \|nV_i\|$. This breaks the analogy between primes and permutations since Theorem \ref{thm:coupling-ub} and \eqref{eq:ABT permutation thm} are not of the same order of magnitude when $n$ is replaced by $\log x$. The main reason why it is possible to get a better result in Theorem \ref{thm:coupling-ub} is because the set $\{\log p : p \text{ primes}\}$ have much shorter gaps around $\log x$ than the gaps of $\ZZ$ around $n$.  
	\end{remss}

	\subsection{Application to the distribution of factorizations of random integers}
	
	We now give some applications of Theorem \ref{thm:coupling-ub} to the theory of divisors of integers.
	
	Let $\Delta^{\infty}$ be the set of sequences $(v_i)_{i \ge 1}$ of non-negative reals satisfying $\sum_{i \ge 1} v_i = 1$, and let $\psi\colon \Delta^{\infty} \to \RR$ be a $K$-Lipschitz function with respect to the $\ell^1$-metric on $\Delta^\infty$. Since the diameter of $\Delta^\infty$ is $2$, the image of $\psi$ is contained in an interval of length $2K$. Consequently, $\psi$ is bounded, and we have $\EE[\one_{N_x = 1}\cdot \psi(\mathbf V)] \ll_{\psi} 1/x$.  By Theorem \ref{thm:coupling-ub}, it follows that
	\begin{equation}
		\label{eq:dual wasserstein}
		\EE\bgg[\one_{N_x \ge 2}\cdot \psi\bg(\tfrac{\log P_1}{\log N_x}, \tfrac{\log P_2}{\log N_x}, \ldots\bg)\bgg] = \EE\bgg[\psi\bg(V_1, V_2, \ldots\bg)\bgg] + O_\psi(1/\log x) .
	\end{equation}
	Using this simple fact, we obtain: 
	\begin{cor}
		Let $\rho(n) \coloneqq \min\{d|n : d\ge \sqrt n\}$. Then there is a constant $c \in (1/2,1)$ such that 
		\[
			\sum_{n\le x} \log \rho(n) = cx\log x + O(x) \qquad (x \ge 1) .
		\]
	\end{cor}
	
	\begin{proof}
	Let $\psi(v_1, v_2, \ldots) \coloneqq \inf \big( \mathcal D(v_1, v_2, \ldots) \cap [\frac{1}{2}, 1]\big)$ with $\mathcal D(v_1, v_2, \ldots) \coloneqq \{\sum_{i \in B} v_i : B\subseteq \mathbb N\}$. It is easy to check that this is a $1$-Lipschitz function. Note that the left-hand side of \eqref{eq:dual wasserstein} with this definition for $\psi$ is exactly equal to $\frac{1}{\floor x} \sum_{2 \le n \le x} \frac{\log \rho(n)}{\log n}$. With \eqref{eq:dual wasserstein} and partial summation, we prove the corollary with $c = \EE[\psi(\mathbf V)]$.
	\end{proof}

	\medskip

	For the next application, we use the coupling again to extract information about integer factorizations. However, to obtain the desired result, we require much more than just the expectation in Theorem \ref{thm:coupling-ub}.
	
	Let $\simplex$ be the set of $k$-tuples $\boldalpha=(\alpha_1, \ldots, \alpha_k) \in \RR^k_{\ge 0}$ satisfying $\alpha_1+\cdots+\alpha_k=1$. We also need to define a special class of functions:
	
	\begin{deff}[The class of functions $\mathcal F_k(\boldalpha)$]\label{dfn:Fktheta}
			Given $k\in\ZZ_{\ge2}$ and $\boldalpha\in \simplex$, let $\mathcal F_k(\boldalpha)$ be the set of functions $f \colon \NN^{k} \to \RR_{ \ge 0}$ satisfying the following three properties:
		\begin{enumerate}[label=(\alph*)]
			\item For any fixed positive integer $n$, the function $f(\mathbf d)$ is a probability mass function over all vectors $\mathbf d \in \NN^k$ satisfying $d_1\cdots d_k = n$, i.e. $\sum_{d_1\cdots d_k = n} f(d_1, \ldots, d_k) = 1$ for all $n \in \NN$.
			
			\item Whenever $\mathbf d$ satisfies 
			\[
			d_i \coloneqq \begin{cases}
				p &\text{if $i = j$} \\
				1 &\text{if $i \ne j$}.
			\end{cases}
			\]
			for some $1 \le j \le k$ and prime $p$, then $f(\mathbf d) = \alpha_j$.
			
			\item The function $f$ is \emph{multiplicative}, i.e. for any vectors $\mathbf a, \mathbf b \in \NN^k$ such that $(a_1\cdots a_k, b_1 \cdots b_k) = 1$, we have the property
			\[
			f(a_1b_1, \ldots, a_kb_k) = f(a_1, \ldots, a_k) \cdot f(b_1, \ldots, b_k).
			\]
		\end{enumerate}
	\end{deff}
	
	\begin{rems}
			Let $\omega(n)$ denote the number of distinct prime factors of $n$. If $d_1\cdots d_k$ is a square-free number and $f\in\mathcal{F}_k (\boldalpha)$, then properties (b) and (c) of the definition above imply that
		\begin{equation}
			\label{eq:f on squarefrees}
			f(d_1,\dots,d_k) = \prod_{j=1}^k \alpha_j^{\omega(d_j)}.
		\end{equation}
	\end{rems}
	
	We will use the class of functions $\mathcal{F}_k(\boldalpha)$ to define certain ``random factorizations'' into $k$ parts of a random integer. Specifically, let us fix $f\in\mathcal{F}_k(\boldalpha)$ and $x\ge1$. We then define a {\it random $k$-factorization corresponding to $f$} to be a random vector $\mathbf{D}_{f, x}=(D_{f,x,1},\dots,D_{f,x,k})$ taking values on $\NN^k$ and satisfying the formula\footnote{Strictly speaking, we only need property (a) of Definition \ref{dfn:Fktheta} to define $\mathbf{D}_{f,x}$. But we will also need the other two properties when proving Theorem \ref{thm:fact into k parts} below.}
	\begin{equation}
		\label{eq:prob mass function f}
		\PP\bgg[D_{f,x,i} = d_i \ \, \forall i \le k\, \bgg|\, N_x = n\bgg] = f(d_1, \ldots, d_k)
	\end{equation}
 for all $n\in\NN$ and all $k$-tuples $(d_1,\dots,d_k)\in\NN^k$ with $d_1\cdots d_k=n$. 
	
	Here are three examples of such random factorizations.
	
	\begin{example}[Uniform sampling]
		\label{ex:uniform sampling}
		Let $f_{\mathrm U(k)}(d_1, \ldots, d_k) \coloneqq \tau_k(d_1\cdots d_k)^{-1}$ with $\tau_k(n)$ being the number of $k$-factorizations of $n$. Then $f_{\mathrm U(k)} \in \mathcal F_k(\frac{1}{k}, \ldots, \frac{1}{k})$. If $f_{\mathrm U(k)}$ is seen as a probability mass function as in \eqref{eq:prob mass function f}, then we are sampling $D_{f_{\mathrm U(k)}, x, 1}\cdots D_{f_{\mathrm U(k)}, x, k}$ uniformly among all $k$-factorizations of $N_x$.
	\end{example}

	\begin{example}[Recursive sampling]
		\label{ex:recursive sampling}
		Let $f_{\mathrm R(k)}(d_1, \ldots, d_k) \coloneqq \prod_{j=1}^{k-1} \tau(d_j \cdots d_k)^{-1}$ with $\tau(n)$ being the number of divisors of $n$. Then $f_{\mathrm R(k)} \in \mathcal F_k(\frac{1}{2}, \frac{1}{4}, \ldots, \frac{1}{2^{k-1}}, \frac{1}{2^{k-1}})$. One way to realize this random $k$-factorization is by first sampling uniformly a divisor $D_{f_{\mathrm R(k)}, x, 1}$ of $N_x$. Then, for all $j < k$, we recursively sample $D_{f_{\mathrm R(k)}, x, j}$ uniformly among the divisors of $\frac{N_x}{D_{f_{\mathrm R(k)}, x, 1} \cdots D_{f_{\mathrm R(k)}, x, j-1}}$.
	\end{example}

	\begin{example}[Multinomial sampling]
		\label{ex:multinomial sampling}
		For any fixed $\boldalpha \in \simplex$, let $f_{\mathrm M(\boldalpha)}(d_1, \ldots, d_k) \coloneqq \prod_{i=1}^{k} \alpha_i^{\Omega(d_i)} \cdot \prod_{p|n} \binom{\nu_p(d_1\cdots d_k)}{\nu_p(d_1), \ldots, \nu_p(d_k)}$ with $\nu_p(d)$ being the $p$-valuation of $d$ and $\Omega(d)$ being the number of prime factors of $d$ counted with multiplicity. The function $f_{\mathrm M(\boldalpha)}$ is in $\mathcal F_k(\boldalpha)$. This sampling can be understood as considering a sequence of i.i.d.~random variables $(B_i)_{i \ge 1}$ satisfying $\PP[B_i = j] = \alpha_j$ and constructing the $k$-factorization $D_{f_{\mathrm M(\boldalpha)}, x,1}\cdots D_{f_{\mathrm M(\boldalpha)}, x,k} = N_x$ as 
		\[
		D_{f_{\mathrm M(\boldalpha)}, x,j} \coloneqq \prod_{\substack{i\ge 1:\ B_i = j}} P_i,
		\]
		where $P_1 P_2 \cdots$ is the prime factorization of $N_x$ as before. With this definition, the vectors $(\nu_p(D_{f_{\mathrm M(\boldalpha)}, x,1}), \cdots, \nu_p(D_{f_{\mathrm M(\boldalpha)}, x,k}))$ all follow multinomial distributions for every prime $p$.
	\end{example}
	
	Starting with the pioneering work of Deshouillers, Dress, and Tenenbaum \cite{DDT79}, many different people have studied the distribution of $\mathbf D_{f, x}$ for various choices of $f \in \mathcal F_k(\boldalpha)$, and they proved that it converges on a logarithmic scale to a certain {\it Dirichlet distribution}. Recall that if $\boldalpha \in \simplex$ satisfies $\alpha_i > 0$ for all $i$, we say that a $\simplex$-valued random vector $\mathbf Z$ follows the Dirichlet distribution $\Dir(\boldalpha)$ if 
	\[
		\PP\bg[Z_i \le u_i\ \, \forall i<k\bg] = F_{\boldalpha}(\mathbf u) \coloneqq \prod_{i=1}^{k} \Gamma(\alpha_i)^{-1} \mathop{\int\cdots\int}\limits_{\substack{0 \le t_i \le u_i \ \forall i < k \\ t_1 + \cdots + t_{k-1} \le 1}} \prod_{i=1}^k t_i^{\alpha_i - 1} \dee t_1\cdots \dee t_{k-1}
	\]
	With this notation, most known results on $\mathbf D_{f, x}$ can be stated in the following form: uniformly for $x \ge 2$ and $\mathbf u \in [0, 1]^{k-1}$, we have 
	\begin{equation}
		\label{eq:previous result on k fact}
		\PP\bg[D_{f, x, i} \le N_x^{u_i}\ \, \forall i < k\bg] = F_{\boldalpha}(\mathbf u) + O_{\boldalpha}\bgg((\log x)^{-\min\{\alpha_1, \ldots, \alpha_{k}\}}\bgg)
	\end{equation}

	As we indicated above, the first result of this type was obtained in 1979 by Deshouillers, Dress, and Tenenbaum \cite{DDT79}, who proved \eqref{eq:previous result on k fact} for $f = f_{\mathrm U(2)}$ (which is also equal to $f_{\mathrm R(2)}$; both cases involve sampling a divisor of $N_x$ uniformly). For higher values of $k$, Nyandwi and Smati \cite{NyandwiSmati13} extended this result in 2013 by proving \eqref{eq:previous result on k fact} for $f = f_{\mathrm U(3)}$, and La Bretèche and Tenenbaum \cite{delaBretecheTenenbaum16} further proved it for $f = f_{\mathrm R(3)}$ in 2016. In 2023, Leung \cite{Ken22} demonstrated that \eqref{eq:previous result on k fact} applies to $f = f_{\mathrm U(k)}$ for all $k \ge 2$. Leung also claimed that his proof could be adapted to any $f$ in a more general class of $k$-dimensional multiplicative functions than $\mathcal F_k(\boldalpha)$ (for a proof, see \cite{KenWeb24}). This more general class allows the quantities $f(1, \ldots, 1, p, 1, \ldots, 1)$, with $p$ being at the $j$\textsuperscript{th} coordinate, to equal $\alpha_j$ {\it on average} rather than pointwise.
	
	Returning to 2007, Bareikis and Manstavi\v{c}ius \cite{BareikisManstavicius07} generalized the result of Deshouillers, Dress, and Tenenbaum with an improvement on the error term. They showed that if $f$ belongs to the more general class of two-dimensional multiplicative functions containing $\mathcal F_2(\alpha_1, \alpha_2)$, as described in Leung's claim, then \eqref{eq:previous result on k fact} holds with the improved error term 
	\[
		\PP\bg[D_{f, x, 1} \le N_x^u\bg] = F_{(\alpha_1,\alpha_2)}(u) + O_{\alpha_1, \alpha_2}\bgg((1 + u\log x)^{-\alpha_1}(1 + (1-u)\log x)^{-\alpha_2}\bgg).
	\]
	
	All results mentioned above rely on Fourier-analytic techniques, such as the Landau--Selberg--Delange method, as their main ingredients to get to their results. 
	
	\medskip
	
	We have a new approach to this problem: since the size of the divisors on a logarithmic scale of any integer is entirely determined by the size of its prime factors, one might expect that the distribution of $\mathbf D_{f, x}$ can be accessed via a quantitative form of Billingsley's theorem. Indeed, using the coupling from Theorem \ref{thm:coupling-ub}, we establish the following result, which proves \eqref{eq:previous result on k fact} in full generality with a Bareikis-Manstavi\v{c}ius type error-term. 
	
	\begin{theo}[Dirichlet law for the factorization into $k$ parts]
		\label{thm:fact into k parts}
		Let $k \ge 2$, let $\boldalpha \in \simplex$ be fixed with $\alpha_i > 0$ for all $i$, and let $f\in\mathcal{F}_k(\boldalpha)$. In addition, let $x>1$ and let $\mathbf{D}_{f, x}$ be the random $k$-factorization corresponding to $f$.
		
		For any $\mathbf u \in [0, 1]^{k-1}$ with at least one $i \in \{1, \ldots, k-1\}$ with $u_i \ne 1$, we have 
		\[
		\PP\bg[D_{f, x,i} \le N_x^{u_i}\ \, \forall i < k\bg] = F_{\boldalpha}(\mathbf u)
		+ O\bgggg(\sum_{\substack{1 \le i < k \\ u_i \ne 1}} \frac{1}{(1+u_i\log x)^{1-\alpha_i}(1+(1-u_i)\log x)^{\alpha_i}}\bgggg) \,;
		\]
		the implied constant in the big-Oh is completely uniform in all parameters.
	\end{theo}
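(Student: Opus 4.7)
The plan is to transport everything onto the coupling of Theorem~\ref{thm:coupling-ub} and to realise the Dirichlet law as the colouring/fragmentation of the Poisson--Dirichlet process from Example~\ref{ex:multinomial sampling}.

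First, I would reduce to the multinomial representative of the class $\mathcal{F}_k(\boldalpha)$. By properties (b)--(c) of Definition~\ref{dfn:Fktheta}, any $f\in\mathcal{F}_k(\boldalpha)$ is multiplicative and agrees with the multinomial choice on square-free integers; see \eqref{eq:f on squarefrees}. Writing $N_x=s\cdot q$ with $s$ squarefree and $q$ squarefull, the two random factorisations differ only in how they split $q$, so each $\log D_{x,j}$ moves by at most $\log q$. Since $\EE[\log q]\ll 1$ (by convergence of $\sum_{p}(\log p)/p^{2}$ together with summation over squarefull $q$), this swap costs only $O(1/\log x)$ on the $\log/\log x$ scale, which is absorbed into the target error. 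So I can and will assume that $f$ is the multinomial sampling of Example~\ref{ex:multinomial sampling}.

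Second, place $N_x=P_1P_2\cdots$ and $\mathbf V$ on the same probability space via Theorem~\ref{thm:coupling-ub}, and introduce an independent i.i.d.\ sequence $(B_i)_{i\ge1}$ with $\PP[B_i=j]=\alpha_j$. Define
\[
D_{x,j}=\prod_{i\,:\,B_i=j}P_i \qquad\text{and}\qquad \tilde Z_j=\sum_{i\,:\,B_i=j}V_i ,
\]
so that $\mathbf D_x$ is distributed as the multinomial $k$-factorisation of $N_x$. The classical colouring property of the Poisson--Dirichlet distribution yields $\tilde{\mathbf Z}\sim\mathrm{Dir}(\boldalpha)$, hence $\PP[\tilde Z_j\le u_j\ \forall j<k]=F_{\boldalpha}(\mathbf u)$ on the nose. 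The triangle inequality combined with Theorem~\ref{thm:coupling-ub} gives
\[
\EE\sum_{j=1}^{k}\bgg|\frac{\log D_{x,j}}{\log x}-\tilde Z_j\bgg|\ \le\ \frac{1}{\log x}\,\EE\sum_{i\ge1}|\log P_i-V_i\log x|\ \ll\ \frac{1}{\log x}.
\]
Writing $X_j=\log D_{x,j}/\log x$ and $\delta_j=X_j-\tilde Z_j$, the symmetric difference between the events $\{X_j\le u_j\ \forall j<k\}$ and $\{\tilde Z_j\le u_j\ \forall j<k\}$ sits inside $\bigcup_{j<k}\{|\tilde Z_j-u_j|\le|\delta_j|\}$. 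The marginal density of $\tilde Z_j$ behaves like $\phi_j(y)\asymp y^{\alpha_j-1}(1-y)^{-\alpha_j}$, and a direct computation shows that $\phi_j(u_j)/\log x$, truncated at scale $1/\log x$ near the endpoints, is exactly the shape $(1+u_j\log x)^{-(1-\alpha_j)}(1+(1-u_j)\log x)^{-\alpha_j}$ appearing in the statement. So the theorem reduces to proving
\[
\PP\bg[|\tilde Z_j-u_j|\le|\delta_j|\bg]\ \ll\ \frac{1}{(1+u_j\log x)^{1-\alpha_j}(1+(1-u_j)\log x)^{\alpha_j}}.
\]

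The main obstacle is obtaining this last inequality at the correct \emph{linear}-in-$1/\log x$ rate. A generic Kolmogorov-versus-Wasserstein comparison only yields $\sqrt{\phi_j(u_j)\,\EE|\delta_j|}$, which is off by a factor of $\sqrt{\log x}$. To recover the sharp rate, one must exploit structure of the coupling beyond its $L^{1}$ control: I expect to use (i) a tail bound showing $|\delta_j|\ll 1/\log x$ except on an event of probability $\ll 1/\log x$, which should follow from the fact that the coupling essentially pairs each $V_i\log x$ with a nearby prime so that individual discrepancies among the large $V_i$'s that dominate $\delta_j$ live at scale $1/(\log x)^{2}$, and (ii) a near-independence argument decoupling $\delta_j$ from the local behaviour of $\tilde Z_j$ at $u_j$, exploiting that $\tilde Z_j$ is determined mostly by the few largest $V_i$'s while the drift $\delta_j$ averages over many coordinates. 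The boundary regime $u_j\log x=O(1)$ or $(1-u_j)\log x=O(1)$ would be handled by a separate, more robust estimate using the explicit form of the Dirichlet density near the corresponding face of the simplex. Assembling these bounds and summing over $j<k$ produces the stated error term.
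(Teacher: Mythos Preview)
Your overall architecture --- couple $N_x$ with $\mathbf V$, colour both sequences with the same i.i.d.\ labels, invoke Donnelly--Tavar\'e for the Dirichlet law, and bound a boundary event --- is exactly the paper's strategy. The reduction to the multinomial representative is a cosmetic variant: the paper instead builds the general $f$ directly into the coupling, using $f$ on the squarefull part and the multinomial rule on the squarefree part, so that the colours attached to the primes dividing $N_x^\flat$ are genuinely i.i.d.; either route arrives at the same place.

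The genuine gap is precisely the step you flag as the main obstacle, and your proposed fix (i) is not correct as stated. It is \emph{not} true that $|\delta_j|\ll1/\log x$ off an event of probability $O(1/\log x)$. Opening up the coupling (Lemma~\ref{lem:coupling ineq} together with Proposition~\ref{prop:dtv of M and N}) one finds that on the good event $\{M_x=N_x\}$ the $\ell^1$ discrepancy is bounded by $\log(x/N_x)+2\log s(N_x)+2\Theta_x$; each of these three pieces has exponential moments (Lemmas~\ref{lem:properties of N}--\ref{lem:properties of Theta}) but none is almost surely $O(1)$. For instance $\PP[\log(x/N_x)>t]\asymp e^{-t}$, so one only gets $|\delta_j|\le C(\log_2 x)/\log x$ with probability $1-O(1/\log x)$. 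Plugging this into the Beta density recovers the theorem with a $\log_2 x$ loss --- this is exactly the weaker argument sketched in the Remarks following Lemma~\ref{lem:bound of prob of ECprob}. Your heuristic that ``the large $V_i$'s dominate $\delta_j$'' is also off: the small $V_i$'s contribute $\Theta_x/\log x$ to $\delta_j$, and $\Theta_x$, while $O(1)$ in mean, is unbounded.

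What the paper does instead is to keep the random radius unsimplified and split it as $\Rnt\asymp(\log(x/N_x)+\log s(N_x))/\log x$ plus $\Rprob=2\Theta_x/\log x$. The symmetric-difference event is then covered by
\[
\bg\{\,|\delta^*_{x,j}-u_j|\le\Rnt\,\bg\}\ \cup\ \bg\{\,|\tilde Z_j-u_j|\le\Rprob\,\bg\},
\]
with the first ball centred at the \emph{number-theoretic} point $\bolddelta_x^*$ and the second at the \emph{probabilistic} point $\tilde{\mathbf Z}$. Each piece is then estimated at the sharp rate by a bespoke argument: the first by elementary multiplicative-function sums over $d_i$ in a short multiplicative window, trading the random radius $\Rnt$ for a dyadic sum weighted by the density of squarefull integers (Section~\ref{sec:boundary event NT}); the second by realising $\tilde Z_j$ and $\Theta_x$ jointly through the Poisson process of intensity $e^{-y}\dee y/y$ and applying the Mecke equation to control the correlation between the random centre and the random radius (Section~\ref{sec:boundary event Prob}). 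Neither is a generic ``near-independence'' statement; both exploit specific structure, and this is where the actual work of the proof lies.
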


	\begin{remss} (a) When $u\in[0,1/2]$, we have $(1+u_i\log x)^{1-\alpha_i}(1+(1-u_i)\log x)^{\alpha_i}\ge (0.5\log x)^{\alpha_i}$. Similarly, when $u\in[1/2,1]$, we have $(1+u_i\log x)^{1-\alpha_i}(1+(1-u_i)\log x)^{\alpha_i}\ge (0.5\log x)^{1-\alpha_i}$. We thus find that the expression in the big-Oh in Theorem \ref{thm:fact into k parts} is $\le (0.5\log x)^{-\min\{\alpha_1,\dots,\alpha_k,1-\alpha_1,\dots,1-\alpha_k\} }$ uniformly in $\mathbf u\in[0,1]^{k-1}$. Since $1-\alpha_j\ge\alpha_i$ for all $i\neq j$ by our assumption that $\alpha_1+\cdots+\alpha_k=1$, we conclude that the error term in Theorem \ref{thm:fact into k parts} is $\ll_{\boldalpha} (\log x)^{-\min\{\alpha_1,\dots,\alpha_k\}}$, thus recovering Leung's estimate \eqref{eq:previous result on k fact} when $f$ lies in the class $\mathcal{F}_k(\boldalpha)$.
		
	(b) If we relied solely on Arratia's original coupling from \cite{Arratia02}, which leads to \eqref{eq:arratia02-result}, our approach would yield the same Theorem \ref{thm:fact into k parts} with each instance of $\log x$ in the error term being replaced by $\frac{\log x}{\log_2 x}$. This adjustment would still offer an improvement over Leung's result if all $u_i$ values are sufficiently far from $0$ or $1$. We provide further explanations about this point in the remarks following Lemma \ref{lem:bound of prob of ECprob}.
	\end{remss}
	
	The proof of Theorem \ref{thm:fact into k parts} is based on a 1987 result of Donnelly and Tavaré \cite{DonnellyTavare87}, who proved the following probabilistic version of Leung's theorem: If $\mathbf V = (V_1, V_2, \ldots)$ is a Poisson--Dirichlet process, and $(C_i)_{i \ge 1}$ is a sequence of i.i.d.~random variables that is independent of $\mathbf V$, supported on $\{1, \ldots, k\}$ and satisfying $\PP[C_i = j] = \alpha_j$ for all $j$, then
	\begin{equation}
		\label{eq:donnelly tavare 87}
		\bgggg(\sum_{i:\ C_i = 1} V_i, \ldots, \sum_{i:\ C_i = k} V_i\bgggg)
	\end{equation}
	follows exactly the Dirichlet distribution $\Dir(\boldalpha)$. In 1998, Arratia \cite{Arratia98} used this result to show that $\PP\bg[D_{f_{\mathrm U(2)}, x, 1} \le N_x^u\bg]$ is $F_{(\frac{1}{2}, \frac{1}{2})}(u) + o(1)$ as $x \to \infty$ with probabilistic methods. We use the coupling to bridge between the distribution of \eqref{eq:donnelly tavare 87} and Theorem \ref{thm:fact into k parts} and get an explicit error term.
	
	\begin{rems}
		Here is a brief heuristic about the shape of the error term we obtain in Theorem \ref{thm:fact into k parts}. Suppose that $N_x \ne 1$ and let $\bolddelta_{f, x} \coloneqq (\frac{\log D_{f, x, 1}}{\log N_x}, \ldots, \frac{\log D_{f, x, k}}{\log N_x})$.  There exists a coupling between $\bolddelta_{f, x}$ and the random vector \eqref{eq:donnelly tavare 87} such that their distance is of typical size $\asymp \frac{1}{\log x}$. For each $j$, the marginal distribution of the $j$\textsuperscript{th} component of $\Dir(\boldalpha)$ is $\mathrm{Beta}(\alpha_j, 1-\alpha_j)$, which is why we get the error term of Theorem \ref{thm:fact into k parts}.
	\end{rems}

	\subsection{Structure of the paper}
	
	We have organized the paper in two main parts. 
	
Part \ref{part:Arratia} contains the proof of Theorem \ref{thm:coupling-ub}. It is divided as follows:
	\begin{itemize}
		\item In Section \ref{sec:coupling}, we present the coupling implicit in Theorem \ref{thm:coupling-ub} and present a proof of the latter as a corollary of four key results (Lemmas \ref{lem:coupling ineq}-\ref{lem:properties of Theta} and Proposition \ref{prop:dtv of M and N}). Lemma \ref{lem:properties of N} is simple and proven right away.
		
		\item In Section \ref{sec:proof of Lemma coupling ineq}, we prove Lemma \ref{lem:coupling ineq}.

		\item In Section \ref{sec:GEM}, we explain another way to realize a GEM process that was presented by Arratia in \cite{Arratia02}, and we use it to prove Lemma \ref{lem:properties of Theta}. This alternative way of describing a GEM process will be also key in proving Proposition \ref{prop:dtv of M and N}.
		
		\item Sections \ref{sec:integer friendly J}, \ref{sec:different J and J*} and \ref{sec:dtv} are reserved to prove Proposition \ref{prop:dtv of M and N}. 
	\end{itemize}  

Finally, Part \ref{part:DDT} contains the proof of Theorem \ref{thm:fact into k parts} and it is organized in the following way:
\begin{itemize}
	\item In Section \ref{sec:Donnelly-Tavare}, we present the argument due to Donnelly--Tavar\'e showing that random $k$-partitions of the components of a Poisson--Dirichlet process are distributed according to a Dirichlet law. 
	
	\item In Section \ref{sec:coupling for factorizations}, we use the coupling of Section \ref{sec:coupling} to construct a coupling of the random $k$-factorization $\mathbf D_{f, x}$ and an analogous $k$-partition of the components of the Poisson--Dirichlet process. We then use this coupling to reduce Theorem \ref{thm:fact into k parts} to estimating two boundary events, one involving number-theoretic objects and the other one using pure probabilistic objects.

	\item In Section \ref{sec:boundary event NT}, we prove the necessary estimate for the number-theoretic boundary event, and in Section \ref{sec:boundary event Prob} we show the analogous probabilistic estimate.
\end{itemize}  

\begin{rems}
	The readers interested only in Theorem \ref{thm:fact into k parts} do not need to go beyond Section \ref{sec:coupling} in Part \ref{part:Arratia}.
\end{rems}

	\subsection{Notation}
	We let $\log_j$ denote the $j$-iteration of the natural logarithm, meaning that $\log_1=\log$ and $\log_j=\log\circ \log_{j-1}$ for $j\ge2$. 
	
	Throughout the paper, the letter $p$ is reserved for prime numbers and the letter $n$ is reserved for natural numbers, unless stated otherwise. Given such $p$ and $n$, we write $\nu_p(n)$ for the $p$-adic valuation of $n$, that is to say the largest integer $v\ge0$ such that $p^v|n$. In addition, we write $\omega(n)$ for the number of distinct prime factors of $n$.
	
	Moreover, for $n\in\NN$, we let $s(n)$ denote its largest square-full divisor. Also, we let $n^\flat \coloneqq n/s(n)$ and we note that $n^\flat$ is always square-free and co-prime to $s(n)$.

	We write $\pi(x)$ for the number of primes $\le x$. We shall also use heavily Chebyshev's function $\theta(x) \coloneqq \sum_{p\le x}\log p$.
	
	To describe various estimates, we use Vinogradov's notation $f(x) \ll g(x)$ or Landau's notation $f(x) = \bigoo{g(x)}$ to mean that $\abs{f(x)} \le C\cdot g(x)$ for a positive constant $C$. If $C$ depends on a parameter $\alpha$,  we write $f(x) \ll_\alpha g(x)$ or $f(x) = O_\alpha(g(x))$. If two positive functions $f, g$ have the same order of magnitude in the sense that $f(x) \ll g(x) \ll f(x)$, then we write $f(x) \asymp g(x)$. 
%	Also, if $g$ takes positive real values in a neighborhood of $a$, then
%	\[
%	f(x) = o(g(x)) \quad (x \to a)\quad \iff\quad \lim_{x \to a} \frac{f(x)}{g(x)} = 0.
%	\] 
	
	If $P$ is some proposition, then the indicator function $\one _P$ will be equal to 1 if $P$ is true and $0$ if $P$ is false.

	\section*{Acknowledgements}
	The authors would like to thank Matilde Lal\'in, Sun-Kai Leung and G\'erald Tenenbaum for their helpful comments on the paper.
	
	TH is supported by the Courtois Chair II in fundamental research. DK is supported by the Courtois Chair II in fundamental research, by the Natural Sciences and Engineering Research Council of Canada (RGPIN-2018-05699 and RGPIN-2024-05850) and by the Fonds de recherche du Qu\'ebec - Nature et technologies (2022-PR-300951 and 2025-PR-345672).

	\newpage
	
	\clearpage
	\thispagestyle{fancy}
	\fancyhf{} % sets both header and footer to nothing
	\renewcommand{\headrulewidth}{0cm}
	\lhead[{\scriptsize \thepage}]{}
	\rhead[]{{\scriptsize\thepage}}
	\part{Sharpening Arratia's coupling}\label{part:Arratia}

	\section{The coupling}
	\label{sec:coupling}
	
	In this section, we describe the coupling behind Theorem \ref{thm:coupling-ub}. To construct it, we begin with an ambient probability space $\Omega$ containing the following objects:
	\begin{itemize}
		\item a GEM process $\mathbf L = (L_1, L_2, \ldots)$;
		
		\item three mutually independent random variables $U'_1, U'_2$ and $U'_3$ that are also independent from $\mathbf L$, and which are uniformly distributed in the open interval $(0, 1)$.
	\end{itemize}  
	We shall extract an integer $N_x$ and a Poisson--Dirichlet process $\mathbf V$ as deterministic functions of these random objects. Extracting $\mathbf V$ is done by sorting the components of $\mathbf L$ in non-increasing order. The extraction of $N_x$ is more complicated, and we need to introduce additional notation to describe it.
	
	Let $(\lambda_j)_{j \ge 0}$ be the increasing sequence of positive real numbers defined by $\lambda_0 \coloneqq e^{-\gamma}$ and 
	\[
	\lambda_j \coloneqq \exp\!\bggg(\!-\gamma+ \sum_{i \le j} \frac{1}{v_iq_i}\bggg)\quad \text{for $j \ge 1$}
	\] 
	with  $\gamma$ being the Euler-Mascheroni constant and $q_j = p_j^{v_j}$ being the $j$\textsuperscript{th} smallest prime power, i.e., $(q_j)_{j \ge 1}$ is the sequence $2, 3, 2^2, 5, 7, 2^3, 3^2,\dots$ Note that
	\begin{equation}
		\label{eq:lambda_j}
		\lambda_j  = \log q_j +O(1/(\log q_j)^2)
	\end{equation}
	by a stronger version of Mertens's estimate (Proposition \ref{prop:mertens}).
		
	Moreover, we have that
	\begin{equation}
		\label{eq:q_j}
		q_{j+1}  = q_j+O(q_j/(\log q_j)^3).
	\end{equation}	
	Indeed, the Prime Number Theorem (Proposition \ref{pnt}) implies that $\theta(q_j+Cq_j/(\log q_j)^3) -\theta(q_j)>0$ if $C$ is large enough, whence $q_{j+1}\le q_j+Cq_j/(\log q_j)^3$, as needed. 
	
	Next, we define the step-function $h:\RR_{>0} \to \RR_{>0}$ by 
	\begin{equation}
		\label{eq:h(t) dfn}
		h(t) \coloneqq \sum_{j \ge 1} (\log q_j) \cdot \one_{\lambda_{j-1} < t \le \lambda_j}
	\end{equation}
	In particular, \eqref{eq:lambda_j} and \eqref{eq:q_j} imply that, if $r(t) \coloneqq |h(t) - t|$, then 
		\begin{equation}
			\label{eq:h(t) estimate}
			r(t) \ll \min\{t, t^{-2}\} \quad \text{for all $t > 0$.}
		\end{equation}
		
	Using the above notation, here is how to extract $N_x$ from $\mathbf L$, $U'_1$, $U'_2$ and $U'_3$:
		\begin{enumerate}[topsep=1em, itemsep=1em]
			\item Construct the sequence of random prime powers or ones $(Q_i)_{i \ge 1}$ by letting $Q_i \coloneqq e^{h(L_i\log x)}$. Note that we have $Q_i = 1$ whenever $L_i \le \frac{e^{-\gamma}}{\log x}$ so there are only finitely many $Q_i$'s that are prime powers.
			
			\item Define the random integer $J_x \coloneqq \prod_{j \ge 2} Q_j$.
			
			\item Define the \emph{extra prime} $\pextra$ as the smallest element in the set $\{1\}\cup\{\text{primes}\}$ that would satisfy $\theta(\pextra)\ge U_1'\theta(x/J_x)$, where $\theta(y)=\sum_{p\le y}\log p$ is Chebyshev's function. (In particular, we have $\pextra=1$ when $J_x>x/2$; otherwise, $\pextra$ is a prime $\le x/J_x$.)
			
			\item Let $\mu_x$ be the probability measure induced by the random variable  $M_x\coloneqq J_x\pextra$, and let $\nu_x$ be the uniform counting measure on $\ZZ\cap[1,x]$. Then, by Lemma \ref{lem:coupling-dtv} and our assumption that $U_2'$ and $U_3'$ exist on $\Omega$, there exists a random variable $N_x$ on $\Omega$ such that:
			
			\smallskip 
			
					\begin{itemize}
						\item $N_x$ is uniformly distributed on $\ZZ\cap[1,x]$,
						\item $\PP\bg[M_x\neq N_x\bg]=\dtv(\mu_x,\nu_x)$
					\end{itemize}
			\smallskip
			with $\dtv$ being the total variation distance defined in \eqref{eq:dtv dfn}.
		\end{enumerate}

	This completes the definition of our coupling, since the space $\Omega$ contains a Poisson--Dirichlet process $\mathbf V$ and also a random variable $N_x$ with distribution $\nu_x$. 
	
	In Section \ref{sec:coupling:reduction}, we show how to use the coupling to prove Theorem \ref{thm:coupling-ub}. Lastly, in Section \ref{sec:coupling:remarks}, we make some technical remarks on the coupling.

	\subsection{Reducing Theorem \ref{thm:coupling-ub} to three lemmas and a proposition}\label{sec:coupling:reduction}
	
	With the following four key results, we directly get Theorem \ref{thm:coupling-ub}. Recall that $s(n)$ denotes the largest square-full divisor of the integer $n$. In addition, let 
	\begin{equation}
		\label{eq:def of Theta_x}
	\Theta_x \coloneqq \sum_{i \ge 1} r(V_i\log x) .
	\end{equation}
	
	\begin{lem}[The $\ell^1$ distance within the coupling]
		\label{lem:coupling ineq}
		When $M_x = N_x$, we have the inequality
		\[
			\sum_{i \ge 1} |\log P_i - V_i\log x| \le \log(x/N_x) + 2\cdot \log s(N_x) + 2\cdot \Theta_x.
		\]
	\end{lem}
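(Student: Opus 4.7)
The plan is to exploit that the sorted matching minimizes the $\ell^1$ transportation cost between two non-increasing $\ell^1$-summable sequences, so it suffices to exhibit a single bijection $\pi:\NN\to\NN$ for which $\sum_i|\log P_{\pi(i)}-V_i\log x|$ satisfies the claimed upper bound. I will build $\pi$ through an auxiliary intermediate sequence. Let $\sigma$ be the permutation such that $V_i=L_{\sigma(i)}$, set $i_0\coloneqq\sigma^{-1}(1)$, and define
\[
W'_i\;\coloneqq\;\begin{cases}h(V_i\log x)&\text{if }i\neq i_0,\\ \log\pextra&\text{if }i=i_0.\end{cases}
\]
For $i\neq i_0$, one has $e^{W'_i}=Q_{\sigma(i)}$ (a prime power or $1$), whereas $e^{W'_{i_0}}=\pextra$, and $\prod_i e^{W'_i}=\pextra\,J_x=M_x=N_x$, so $\{e^{W'_i}\}$ is a prime-power factorization of $N_x$. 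The triangle inequality then gives
\[
\sum_i|\log P_{\pi(i)}-V_i\log x|\;\le\;\underbrace{\sum_i|\log P_{\pi(i)}-W'_i|}_{\mathrm{(I)}}\;+\;\underbrace{\sum_i|W'_i-V_i\log x|}_{\mathrm{(II)}},
\]
and I will bound each piece separately.

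For (II), the terms with $i\neq i_0$ sum $r(V_i\log x)$ and so contribute at most $\Theta_x$. For $i=i_0$, combining $\log x=L_1\log x+\sum_{j\ge 2}L_j\log x$, $\log J_x=\sum_{j\ge 2}h(L_j\log x)$, and $\log N_x=\log\pextra+\log J_x$ (which uses $M_x=N_x$) yields the identity
\[
\log\pextra-L_1\log x\;=\;R-\log(x/N_x),\qquad R\;\coloneqq\;\sum_{j\ge 2}\bigl(L_j\log x-h(L_j\log x)\bigr),
\]
with $|R|\le\Theta_x$. Since $\log(x/N_x)\ge 0$, this gives $|\log\pextra-L_1\log x|\le\Theta_x+\log(x/N_x)$, and hence $(\mathrm{II})\le 2\Theta_x+\log(x/N_x)$.

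For (I), I choose $\pi$ as follows: for each $i$ with $e^{W'_i}=p^e>1$, let $\pi(i)$ be the index of one specific copy of the prime $p$ in the sorted prime factorization $P_1P_2\cdots$ of $N_x$; the remaining $e-1$ copies of $p$ (present because $p^e\mid N_x$) are paired with indices $i$ having $W'_i=0$, of which there are infinitely many; any still-unmatched indices with $W'_i=0$ are paired with the tail positions where $P_{\pi(i)}=1$. Each prime power $p^e$ with $e\ge 2$ arising from the $W'$-sequence contributes $|e\log p-\log p|=(e-1)\log p$ from its own slot plus an additional $(e-1)\log p$ from the $e-1$ matched extras, so $(\mathrm{I})=2\sum(e-1)\log p$ summed over these prime powers. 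The hard step will be verifying that the product of these prime powers divides $s(N_x)$, which reduces to the $p$-adic inequality $\sum_{i:\,e_i\ge 2,\,p_i=p}e_i\le\nu_p(N_x)=\nu_p(s(N_x))$; this holds since any prime contributing a proper prime power via some $Q_j$ automatically lies in $s(N_x)$. Granting this, $(\mathrm{I})\le 2\log s(N_x)$, and combining with the bound on (II) yields the lemma.
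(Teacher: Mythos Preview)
Your proof is correct and follows essentially the same strategy as the paper: invoke the rearrangement inequality, then compare the sorted prime factorization of $N_x$ to the sequence $(\pextra,Q_2,Q_3,\dots)$ via a triangle inequality, bounding the ``prime-power defect'' by $2\log s(N_x)$ and the remaining discrepancy by $\log(x/N_x)+2\Theta_x$ through the identity linking $\log\pextra$, $L_1\log x$, $\log J_x$ and $\log N_x$. The only presentational difference is that you build a single explicit bijection $\pi$ and a single intermediate sequence $(W'_i)$, whereas the paper passes through two intermediate sequences $(\hat P_i)$ and $(\tilde P_i)$, incurring $\log s(N_x)$ twice in two separate steps; your packaging is a touch more direct but the underlying ideas are identical.
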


	\begin{lem}[Properties of $N_x$]
		\label{lem:properties of N}
			Fix $\alpha\in[0,1)$ and $\beta\in[0,1/2)$. Uniformly over $x\ge1$, we have 
			\[
			\EE\bg[(x/N_x)^\alpha s(N_x)^\beta\bg] \ll_{\alpha,\beta} 1.
			\]
	\end{lem}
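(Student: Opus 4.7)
The plan is to compute the expectation directly from the uniform distribution of $N_x$, then perform the standard decomposition $n = s(n)\cdot n^\flat$ with $s(n)$ square-full and $n^\flat$ square-free coprime to $s(n)$.

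First, for $x\in[1,2)$ we have $N_x=1$ almost surely, so $(x/N_x)^\alpha s(N_x)^\beta\le 2^\alpha$ and the bound is trivial. Henceforth assume $x\ge 2$, so that $\lfloor x\rfloor\ge x/2$. Then
\[
\EE\bigl[(x/N_x)^\alpha s(N_x)^\beta\bigr]
\;=\;\frac{1}{\lfloor x\rfloor}\sum_{n\le x}\bigl(x/n\bigr)^\alpha s(n)^\beta
\;\le\;\frac{2\,x^\alpha}{x}\sum_{\substack{s\le x\\ s\text{ square-full}}}s^{\beta-\alpha}\!\!
\sum_{\substack{m\le x/s\\ (m,s)=1,\ \mu^2(m)=1}}\!\!m^{-\alpha},
\]
where I have dropped the coprimality and square-freeness conditions when bounding the inner sum.

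Next, since $\alpha<1$, partial summation (or comparison with an integral) gives $\sum_{m\le y}m^{-\alpha}\ll y^{1-\alpha}$ uniformly in $y\ge 1$. Substituting $y=x/s$ and simplifying yields
\[
\EE\bigl[(x/N_x)^\alpha s(N_x)^\beta\bigr]
\;\ll_\alpha\; \sum_{\substack{s\ge 1\\ s\text{ square-full}}} s^{\beta-1}.
\]

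Finally, the last series is a classical one: by the Euler product,
\[
\sum_{s\text{ square-full}}s^{-c}\;=\;\prod_p\Bigl(1+\sum_{k\ge 2}p^{-kc}\Bigr)\;=\;\frac{\zeta(2c)\zeta(3c)}{\zeta(6c)},
\]
which converges provided $2c>1$. Taking $c=1-\beta$, the hypothesis $\beta<1/2$ gives $2c>1$, and the series is finite. This yields the desired bound $\EE[(x/N_x)^\alpha s(N_x)^\beta]\ll_{\alpha,\beta}1$.

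There is no real obstacle here; the only thing worth highlighting is that the cutoff $\beta<1/2$ enters precisely because the square-full zeta series $\sum s^{-(1-\beta)}$ fails to converge at $\beta=1/2$ (the term $\zeta(2(1-\beta))$ in the Euler product develops its pole), and the cutoff $\alpha<1$ enters because one needs the harmonic-type sum $\sum_{m\le y}m^{-\alpha}$ to grow like a power of $y$ rather than logarithmically (or diverge).
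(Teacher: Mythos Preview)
Your proof is correct and follows essentially the same approach as the paper: decompose $n=s(n)\cdot n^\flat$, bound the inner sum over the square-free part using $\alpha<1$, and then use $\beta<1/2$ to ensure convergence of $\sum_{s\text{ square-full}}s^{\beta-1}$. The only differences are cosmetic---you treat the case $x<2$ separately and write out the Euler product $\zeta(2c)\zeta(3c)/\zeta(6c)$ explicitly, whereas the paper simply asserts convergence.
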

	
	\begin{proof} We must show that
	\[
	S\coloneqq	\sum_{n \le x} (x/n)^\alpha s(n)^\beta\ll_{\alpha,\beta} x.
	\]
	Indeed, if we let $b=s(n)$ and $a=n^\flat=n/b$, then
	\[
	S\le \sum_{\substack{b\le x \\ b\ \text{square-full} }} x^\alpha b^{\beta-\alpha} \sum_{a\le x/b} a^{-\alpha} 
	\ll_\alpha \sum_{\substack{b\le x \\ b\ \text{square-full} }} x^\alpha b^{\beta-\alpha} (x/b)^{1-\alpha} 
	\le x \sum_{b\ \text{square-full} } b^{\beta-1},
	\]
	where we used our assumption that $\alpha<1$. 
	Since we have assumed that $\beta<1/2$, the last sum over $b$ converges, thus completing the proof.
	\end{proof}
	
	\begin{lem}[Properties of $\Theta_x$]
		\label{lem:properties of Theta}
			Fix $\alpha\ge 0$. Uniformly over $x\ge1$, we have 
			\[
			\EE\bg[e^{\alpha \Theta_x}\bg] \ll_{\alpha} 1.
			\]
	\end{lem}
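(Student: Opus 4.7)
The plan is to compute all integer moments of $\Theta_x$ via the factorial moment density of the Poisson--Dirichlet process, then recover the exponential moment by summing the resulting series. The key input is the Palm--Campbell formula for $\mathrm{PD}(1)$: for every $k\ge 1$ and every Borel $g\colon(0,1)^k\to[0,\infty)$,
\[
\EE\sum_{\substack{i_1,\ldots,i_k\ge 1\\ \text{pairwise distinct}}} g(V_{i_1},\ldots,V_{i_k}) = \int\!\cdots\!\int_{\substack{v_j>0\\ v_1+\cdots+v_k<1}} \frac{g(v_1,\ldots,v_k)}{v_1\cdots v_k}\,dv_1\cdots dv_k.
\]
I would prove this by induction on $k$. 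The base case $k=1$ is the standard one-point intensity computation $\rho_1(v)=1/v$ on $(0,1)$ for $\mathrm{PD}(1)$. The inductive step rests on the size-biased invariance of $\mathbf V$: if $I$ is drawn with $\PP[I=i\mid\mathbf V]=V_i$, then $V_I\sim\mathrm{Unif}(0,1)$ and $(V_j)_{j\ne I}/(1-V_I)$ is an independent copy of $\mathbf V$. This invariance is a direct consequence of Arratia's alternative GEM representation recalled in Section~\ref{sec:GEM}.

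With this formula in hand, write $e^{\alpha\Theta_x}=\prod_{i\ge1}\bg(1+f(V_i)\bg)$ with $f(v):=e^{\alpha r(v\log x)}-1\ge0$, and expand the product as
\[
e^{\alpha\Theta_x} = \sum_{k\ge0}\ \sum_{i_1<\cdots<i_k}\ \prod_{j=1}^k f(V_{i_j}).
\]
The infinite product converges almost surely since $\sum_i f(V_i)\le \alpha e^{\alpha\|r\|_\infty}\Theta_x<\infty$ a.s., which itself follows from $\EE[\Theta_x]=\int_0^{\log x}r(t)/t\,dt\ll 1$. As every summand is non-negative, I may exchange sum and expectation by monotone convergence, apply the factorial moment formula termwise, and drop the simplex constraint $v_1+\cdots+v_k<1$, obtaining
\[
\EE[e^{\alpha\Theta_x}] \le \sum_{k\ge0}\frac{1}{k!}\bggg(\int_0^1\frac{f(v)}{v}dv\bggg)^{\!k} = \exp\!\bgg(\int_0^{\log x}\frac{e^{\alpha r(t)}-1}{t}\,dt\bgg)
\]
after the substitution $t=v\log x$. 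The hypothesis $r(t)\ll\min(t,t^{-2})$ supplies both $\|r\|_\infty\ll1$ (the envelope $\min(t,t^{-2})$ is maximized at $t=1$) and $\int_0^\infty r(t)/t\,dt\ll1$ (the integrand is $\ll\min(1,t^{-3})$); combining with the elementary inequality $e^{\alpha r(t)}-1\le\alpha e^{\alpha\|r\|_\infty}r(t)$ controls the exponent by $O_\alpha(1)$, giving the desired uniform bound.

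The main obstacle is the careful derivation of the factorial moment density, or equivalently of the size-biased invariance of $\mathrm{PD}(1)$. Although both are well known in the folklore, their proofs should be made rigorous in this paper using the specific GEM representation of Section~\ref{sec:GEM}; once that is available, the remaining manipulations are routine.
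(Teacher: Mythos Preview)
Your argument is correct, but the paper takes a shorter route. Instead of working with the $k$-point correlation functions of $\mathrm{PD}(1)$, the paper exploits the Poisson realization of Section~\ref{sec:GEM} directly: from $L_1=1-S_1/\log x$ and $L_j=Y_{j-1}/\log x$ one has
\[
\Theta_x = r(L_1\log x)+\sum_{i\ge 1} r(Y_i) \le \Theta_\infty + O(1),
\qquad \Theta_\infty:=\sum_{i\in\ZZ} r(Y_i),
\]
and since the $Y_i$ are the $y$-coordinates of a Poisson process with intensity $\mathrm dy/y$, a single application of Campbell's theorem (Proposition~\ref{prel:campbell}) gives
\[
\EE\bigl[e^{\alpha\Theta_\infty}\bigr]=\exp\!\Bigl(\int_0^\infty\frac{e^{\alpha r(y)}-1}{y}\,\mathrm dy\Bigr),
\]
which is your final expression without the detour through factorial moments. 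What your approach buys is that it stays entirely inside the $\mathrm{PD}(1)$ framework and never needs to pass to the ambient Poisson process $\RS$, at the cost of having to establish the Palm formula for $\mathrm{PD}(1)$ (which you correctly identify as the main obstacle). The paper's approach trades that lemma for Campbell's theorem, which is already quoted as Proposition~\ref{prel:campbell}; in effect, your product expansion plus the correlation-function identity is a re-derivation of Campbell's formula specialized to $\mathrm{PD}(1)$.
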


	\begin{prop}[Total variation distance between $M_x$ and $N_x$]
		\label{prop:dtv of M and N}
		For $x \ge 2$, we have 
		\[
			\PP\bg[M_x \ne N_x\bg] \ll \frac{1}{\log x} .
		\]
	\end{prop}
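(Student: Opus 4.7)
The plan is to bound the total variation distance $\dtv(\mu_x,\nu_x)$ directly, since by Step 4 of the coupling construction this quantity equals $\PP[M_x\ne N_x]$. Conditioning on $J_x$ and using the definition of $\pextra$, for every $n\in\NN$ we have
\[
\mu_x(n) \;=\; \one_{n>x/2}\,\PP[J_x=n] \;+\; \sum_{\substack{p\mid n,\ p\text{ prime}\\ n/p\le x/2}}\PP[J_x=n/p]\cdot\frac{\log p}{\theta(xp/n)}.
\]
The driving heuristic is that, by the Prime Number Theorem, $\log p/\theta(y)\approx 1/y$ uniformly for primes $p\le y$, so the law on primes $p\le y$ with mass $\log p/\theta(y)$ at $p$ is a PNT-smoothed analog of the uniform law on $\ZZ\cap[1,y]$. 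Consequently, given $J_x=j$, $\pextra$ is ``approximately uniform'' on $\ZZ\cap[1,x/j]$, so $M_x=J_x\pextra$ should be close to uniform on $\ZZ\cap[1,x]$, with a deviation that ought to be exactly of order $1/\log x$.

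To make this rigorous, I would follow the three-step plan suggested by the section headings. First, introduce an ``integer-friendly'' variable $J_x^\ast$, coupled to $J_x$ through the same underlying GEM process but genuinely integer-valued, obtained by replacing each factor $e^{h(L_j\log x)}$ by a carefully chosen nearby prime power; the bound $r(t)\ll\min(t,t^{-2})$ of \eqref{eq:h(t) estimate} should ensure that $\log J_x$ and $\log J_x^\ast$ differ by an absolutely bounded amount with high probability, making this replacement essentially costless. Second, compare the laws of $M_x=J_x\pextra$ and of $M_x^\ast=J_x^\ast\pextra^\ast$ (the latter defined by the same step-4 procedure with $J_x^\ast$ in place of $J_x$), propagating the coupling between $J_x$ and $J_x^\ast$ through the extra-prime sampling; the multiplicative/stick-breaking structure of the GEM process should let one control exactly how a localized perturbation of a single factor propagates to the joint law. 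Third, compare $M_x^\ast$ directly with $\nu_x$: since $J_x^\ast$ is integer-valued and $\pextra^\ast$ has mass $\log p/\theta(y)$ at each prime $p\le y$, the identity $\theta(y)=\sum_{p\le y}\log p$ together with a quantitative form of PNT should collapse the resulting sum over representations $n=j\cdot p$ into $1/\lfloor x\rfloor+O(1/(x\log x))$ for each $n\le x$, which upon summation delivers the desired bound $\dtv(\mathrm{law}(M_x^\ast),\nu_x)\ll 1/\log x$.

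The hardest part will be controlling the behaviour in the two boundary regimes of $L_1$. When $L_1\ll 1/\log x$, one has $J_x>x/2$ so that $\pextra=1$ and $M_x=J_x$ is typically not uniformly distributed; this event, however, already has probability $\ll 1/\log x$ from the uniformity of $L_1$. When $L_1$ is close to $1$, $J_x$ is very small, the range of $\pextra$ is nearly all of $[1,x]$, and the PNT approximation $\log p/\theta(y)\approx 1/y$ is at its weakest for the smallest primes; here the proof must exploit a sufficiently strong quantitative form of PNT so that the accumulated error across all divisors contributes only $O(1/\log x)$. The recursive stick-breaking property of the GEM process --- conditional on $L_1$, the normalized tail $(L_j/(1-L_1))_{j\ge2}$ is again an independent GEM process --- should be the key structural tool that permits this case analysis and decouples the boundary contributions from the bulk.
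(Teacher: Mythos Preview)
Your three-step outline (introduce $J_x^\ast$, show it coincides with $J_x$ with high probability, then use PNT to compare with $\nu_x$) is exactly the skeleton of the paper's argument in Sections~\ref{sec:integer friendly J}--\ref{sec:dtv}. However, your description of the individual steps contains misconceptions that would prevent the plan from going through as written.

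First, you describe $J_x^\ast$ as ``obtained by replacing each factor $e^{h(L_j\log x)}$ by a carefully chosen nearby prime power''. But $e^{h(L_j\log x)}$ is \emph{already} a prime power (or $1$); that is the whole point of $h$. The paper's $J_x^\ast$ is not a rounding of $J_x$: it is the product of the \emph{same} prime-power factors, but under a different indexing of the underlying Poisson process $\RS$. Specifically, the points are re-sorted by $T_i^\ast\coloneqq W_iY_i/h(Y_i)$ rather than by $W_i$, and the cutoff for the product is determined by $\prod_{i\ge1}Q_i^\ast\le x<\prod_{i\ge0}Q_i^\ast$ rather than by $S_1\le\log x<S_0$. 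This re-indexing is what makes the law of $J_x^\ast$ exactly computable (Lemma~\ref{lem:probaj}): for fixed $t$, the product $\prod_{T_i^\ast>t}Q_i^\ast$ has mass $j^{-1-t}/\zeta(1+t)$ at $j$, a clean identity with no analogue for the $W$-ordering.

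Second, you misidentify the hard step. The delicate part is not ``boundary regimes of $L_1$'', but proving $\PP[J_x\ne J_x^\ast]\ll 1/\log x$ (Proposition~\ref{prop:prob J neq J*}). This requires controlling two separate phenomena: the event $\EC_1^c$ that the thresholds defining $J_x$ and $J_x^\ast$ fall near each other (Lemma~\ref{lem:prob e1c}, which needs a second-moment argument on $R_x=\sum_{i\ge2}r(Y_i)$ via Mecke's equation), and the events $\EC_2^c,\EC_3^c$ that the two orderings swap the index-$0$ point with a neighbour (Lemma~\ref{lem:prob e2c e3c}). The bound $r(t)\ll\min(t,t^{-2})$ is used here, but not in the way you suggest: it controls the probability that $W_iY_i/h(Y_i)$ and $W_0Y_0/h(Y_0)$ are close enough to flip order. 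Arratia's original paper only obtained $\ll(\log_2 x)/\log x$ at this step; sharpening to $1/\log x$ is the main technical novelty.

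Third, the paper does not introduce a separate $M_x^\ast=J_x^\ast\pextra^\ast$. Once $\PP[J_x\ne J_x^\ast]\ll1/\log x$ is known, one simply works on the event $\{J_x=J_x^\ast\le x/2\}$, substitutes the explicit law of $J_x^\ast$ from Lemma~\ref{lem:probaj}, and reduces to the elementary identity $\sum_{pj\in A}\log p=\sum_{a\in A}L(a)$ with $L(a)=\sum_{p\mid a}\log p$; the remaining error terms $\sum_{a\le x}\log(x/a)$ and $\sum_{a\le x}(\log a-L(a))$ are both $O(x)$ by trivial estimates.
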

	
	The proof of Lemma \ref{lem:coupling ineq} is given in Section \ref{sec:proof of Lemma coupling ineq}, and the proof of Lemma \ref{lem:properties of Theta} is given in Section \ref{sec:GEM}.  The proof of Proposition \ref{prop:dtv of M and N} is the longest part. We set it up in Sections \ref{sec:integer friendly J} and \ref{sec:different J and J*} to eventually give it in Section \ref{sec:dtv}. Here is how we get Theorem \ref{thm:coupling-ub} with these results:

	\begin{proof}[Proof of Theorem \ref{thm:coupling-ub}]
		Let $S \coloneqq \sum_{i \ge 1} |\log P_i - V_i\log x|$. The proof that $\EE[S] \gg 1$ is explained in the remark after the statement of the theorem in the introduction. For the upper bound, we first note that we always have the trivial bound $S \le 2 \log x$. This bound and Lemma \ref{lem:coupling ineq} gives us
		\[
			S \le \one_{M_x \ne N_x}\cdot (2\log x) + \one_{M_x = N_x}\cdot \bg(\log(x/N_x) + 2\cdot \log s(N_x) + 2\cdot \Theta_x\bg).
		\] 
		Taking expectations on both sides, we get $\EE[S] \ll 1$ with Lemmas \ref{lem:properties of N}-\ref{lem:properties of Theta} and with Proposition \ref{prop:dtv of M and N}.
	\end{proof}

	In fact, if we condition on the event $M_x = N_x$, we can obtain a much stronger bound:
	
	\begin{prop}\label{thm:coupling-ub strong}
		Fix $\alpha\in[0,1/4)$. For $x\ge2$, we have
		\[
		\EE\bgggg[\exp\bgg(\alpha \sum_{i \ge 1} |\log P_i - V_i \log x|\bgg)  \,\bggg|\, M_x = N_x \bgggg] \ll_{\alpha} 1    .
		\]
	\end{prop}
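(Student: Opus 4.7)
The plan is to exponentiate the pointwise bound from Lemma \ref{lem:coupling ineq} and split the resulting product via H\"older's inequality. Writing $S\coloneqq \sum_{i\ge1}|\log P_i - V_i\log x|$, Lemma \ref{lem:coupling ineq} gives, on the event $M_x=N_x$, the inequality
\[
e^{\alpha S} \le (x/N_x)^{\alpha}\cdot s(N_x)^{2\alpha}\cdot e^{2\alpha\Theta_x}.
\]
Hence the conditional expectation in the statement is bounded by
\[
\frac{1}{\PP[M_x=N_x]}\,\EE\bgg[(x/N_x)^{\alpha} s(N_x)^{2\alpha} e^{2\alpha\Theta_x}\bgg].
\]
By Proposition \ref{prop:dtv of M and N}, there is an absolute constant $x_0$ such that $\PP[M_x=N_x]\ge 1/2$ for $x\ge x_0$, so the prefactor is $O(1)$ in that range; for $2\le x\le x_0$ the sum $S$ is trivially $\le 2\log x_0$, so the claimed estimate is immediate. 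It therefore suffices to bound the unconditional expectation on the right for $x\ge x_0$.

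Next I would apply H\"older's inequality with conjugate exponents $p,q>1$ satisfying $1/p+1/q=1$:
\[
\EE\bgg[(x/N_x)^{\alpha} s(N_x)^{2\alpha} e^{2\alpha\Theta_x}\bgg]
\le \EE\bgg[(x/N_x)^{\alpha p} s(N_x)^{2\alpha p}\bgg]^{1/p}\EE\bgg[e^{2\alpha q\Theta_x}\bgg]^{1/q}.
\]
Lemma \ref{lem:properties of N} ensures that the first factor is $O_\alpha(1)$ provided the two exponent conditions $\alpha p<1$ and $2\alpha p<1/2$ both hold; the binding constraint is $\alpha p<1/4$. Lemma \ref{lem:properties of Theta} handles the second factor for every fixed $q\ge 1$, since $2\alpha q$ is a fixed non-negative constant.

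The only thing to verify is that one can choose such a $p$. Since $\alpha<1/4$ by hypothesis, the interval $(1,1/(4\alpha))$ is non-empty, and any $p$ in this interval yields both conditions simultaneously (with $q=p/(p-1)$ being some fixed finite quantity depending only on $\alpha$). This is precisely where the threshold $\alpha<1/4$ in the statement comes from: the exponent $2$ in front of $\log s(N_x)$ in Lemma \ref{lem:coupling ineq}, combined with the constraint $\beta<1/2$ in Lemma \ref{lem:properties of N}, produces the factor $1/4$. There is no further analytic obstacle beyond matching these exponents, so the argument is essentially bookkeeping after the substitution provided by Lemma \ref{lem:coupling ineq}.
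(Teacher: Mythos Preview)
Your proof is correct and is essentially the same approach the paper indicates: exponentiate the pointwise bound from Lemma~\ref{lem:coupling ineq} and apply H\"older's inequality together with Lemmas~\ref{lem:properties of N} and~\ref{lem:properties of Theta}. You have simply filled in the details the paper leaves implicit, including the use of Proposition~\ref{prop:dtv of M and N} to control the conditioning factor $\PP[M_x=N_x]^{-1}$ and the verification that the exponent constraint $\alpha<1/4$ arises from the requirement $2\alpha p<1/2$ in Lemma~\ref{lem:properties of N}.
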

	
	\begin{proof}
	This follows readily by H\" older's inequality and by Lemmas \ref{lem:coupling ineq}-\ref{lem:properties of Theta}.
	\end{proof}

	\subsection{Remarks on the coupling}\label{sec:coupling:remarks}

	(a) As discussed previously, we have $\lambda_{j-1}\approx \lambda_j\approx \log q_j$, and thus $(\log x)L_i \approx \log Q_i$ as long as $L_i$ is not too small. In particular, we expect that $\sum_{i \ge 1} \log Q_i$ would be too close to $\log x$, and thus $\prod_{i\ge1}Q_i$ cannot serve as a proxy of $N_x$. This is the reason we have to delete $Q_1$ from the factors of $J_x$, and we insert instead an extra random prime $\pextra$ conveniently chosen so that $J_x\pextra$ has a distribution close to $\nu_x$. 
	
	As we already remarked, we have $\pextra=1$ if, and only if, $J_x > x/2$ (which happens rarely); otherwise, $\pextra$ is a prime $\le x/J_x$. As a matter of fact, for all $j\in\ZZ\cap[1,x/2]$, we have
	\begin{equation}
		\label{eq:distribution of Pextra}
		\PP\bg[\pextra = p\,|\, J_x = j\bg] =  \frac{\one_{p\le x/j}\cdot\log p}{\theta(x/j)}.
	\end{equation}
	This is the crucial property that will allow us to show that $M_x=J_x\pextra$ is close to being uniformly distributed.
	
	\medskip
	
	(b) The coupling we defined above is a modification of Arratia's coupling in \cite{Arratia02}. Some of the differences in our definition are purely aesthetic. The one major difference is within step (3), which is the whole reason why we obtain a stronger bound than \eqref{eq:arratia02-result}. The construction of Arratia's extra prime $\parratia$ had a different distribution which satisfied 
	\begin{equation}
		\label{eq:extraprime-arratia}
		\PP\bg[\parratia = p\,|\, J_x=j\bg] = \frac{1}{1+\pi(x/j)} 
	\end{equation}
	for all $j\le x$ and all $p \in \{1\}\cup\{\text{primes}\le x/j\}$. It is possible to get the inequality in Lemma \ref{lem:coupling ineq} with Arratia's original coupling. However, it would be impossible to get a version of Proposition \ref{prop:dtv of M and N} with a bound better than $\frac{\log_2x}{\log x}$.

	\section{The $\ell^1$ distance within the coupling}\label{sec:proof of Lemma coupling ineq}
	
	In this section we establish Lemma \ref{lem:coupling ineq}. 
	We need the following rearrangement inequality.
	
	\begin{lem}[Rearrangement inequality]
		\label{lem:rearrangement-ineq}
		For any two non-increasing sequences $(x_i)_{i \ge 1}$ and $(y_i)_{i \ge 1}$ of real numbers, and for any two permutations $\sigma, \rho \colon \NN \to \NN$, we have 
		\[
		\sum_{i \ge 1} |x_i - y_i| \le \sum_{i \ge 1} |x_{\sigma(i)} - y_{\rho(i)}|.
		\]
	\end{lem}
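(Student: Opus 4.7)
The plan is to reduce to a single permutation, establish a four-term swap inequality, and propagate it via adjacent transpositions, with a truncation step to handle the infinite case.

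First, by substituting $j = \rho(i)$ and setting $\tau \coloneqq \sigma \circ \rho^{-1}$, the right-hand side equals $\sum_{j \ge 1} |x_{\tau(j)} - y_j|$, so it suffices to show that for every permutation $\tau$ of $\NN$,
\[
\sum_{i \ge 1} |x_i - y_i| \le \sum_{i \ge 1} |x_{\tau(i)} - y_i|.
\]

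The key ingredient is the elementary four-term inequality: whenever $a \le b$ and $c \le d$, one has $|a-c| + |b-d| \le |a-d| + |b-c|$. I would verify this by a short case analysis on the relative ordering of $a,b,c,d$ (six cases, with equality in the two extreme ones and strict inequality in the mixed ones). Since $(x_i)$ and $(y_i)$ are non-increasing, if $i < j$ and $\tau(i) > \tau(j)$ then $x_{\tau(i)} \le x_{\tau(j)}$ and $y_j \le y_i$; applying the four-term inequality with $a = x_{\tau(i)}$, $b = x_{\tau(j)}$, $c = y_j$, $d = y_i$ shows that transposing the values $\tau(i)$ and $\tau(j)$ weakly decreases $\sum_k |x_{\tau(k)} - y_k|$.

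For any permutation of a finite set, iterating such bubble-sort swaps monotonically reduces $\tau$ to the identity while the sum only decreases, yielding the inequality in the finite case. For a general permutation $\tau$ of $\NN$, I would argue by truncation. In the paper's application both sequences are non-negative and summable (indeed $\log P_i=0$ for all large $i$, and $\sum_i V_i\log x=\log x$), so given $\varepsilon>0$ one can choose $N$ with $\sum_{i>N}(x_i+y_i)<\varepsilon$. I would then modify $\tau$ on the finite set $\{1,\dots,N\}\cup\tau(\{1,\dots,N\})\cup\tau^{-1}(\{1,\dots,N\})$ into a permutation of this finite set by re-routing the outgoing/incoming pairs, changing the relevant partial sum by at most $O(\varepsilon)$; applying the finite case to this modification and letting $\varepsilon\to0$ completes the proof.

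The main obstacle will be the truncation step, since a general permutation of $\NN$ need not restrict to a permutation of an initial segment. The bookkeeping is elementary but requires carefully tracking the swaps sending an index $\le N$ to one $>N$, and this is precisely where the decay of $(x_i)$ and $(y_i)$ enters; for the non-negative, summable sequences arising in the application this is straightforward.
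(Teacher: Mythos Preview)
The paper does not prove this lemma; it simply cites \cite[Lemma~3.2]{ABT06}. Your argument---reduce to a single permutation, establish the four-point swap inequality $|a-c|+|b-d|\le|a-d|+|b-c|$ for $a\le b$ and $c\le d$, then bubble-sort---is the standard one and handles the finite case correctly. The truncation you sketch also works for the non-negative summable sequences arising in the application: every index $i$ in your set $S$ that requires rerouting has $i>N$ (since $i\le N$ forces $\tau(i)\in\tau(\{1,\dots,N\})\subseteq S$), and both $\tau(i)$ and the rerouted target lie above $N$ as well, so the total change is indeed controlled by $\sum_{j>N}(x_j+y_j)<\varepsilon$. This suffices for the paper's needs, though not for the lemma in the generality stated (arbitrary non-increasing real sequences with no summability assumed); if you want the full statement without a truncation step, the layer-cake identity $|a-b|=\int_{\RR}|\one_{a>t}-\one_{b>t}|\dee t$ reduces the problem to a pointwise comparison of symmetric differences of level sets, which is immediate since the sorted level sets are initial segments of $\NN$.
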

	
	\begin{proof} See \cite[Lemma 3.2]{ABT06}. 
	\end{proof}

	\begin{proof}[Proof of Lemma \ref{lem:coupling ineq}]
		Recall the definitions of the sequence of prime powers $(Q_i)_{i \ge 1}$, the extra prime $\pextra$ and $M_x$. We create another sequence of primes or ones $(\tilde P_i)_{i \ge 1}$ in the following way: 
		\begin{itemize}
			\item We set $\tilde P_1 \coloneqq \pextra$.
			
			\item If $i \ge 2$ with $Q_i = 1$, then we set $\tilde P_i \coloneqq 1$.
			
			\item If $i \ge 2$ with $Q_i > 1$, we set $\tilde P_i$ to be the only prime dividing $Q_i$.
		\end{itemize}
		We let $(\hat P_i)_{i\ge1}$ be the sequence $(\tilde{P}_i)_{i\ge1}$ in non-increasing order, and we set
		\[
		\hat M_x \coloneqq \prod_{i\ge1} \tilde{P}_i = \prod_{i\ge1} \hat P_i. 
		\]
		Since $M_x=N_x$, we have $P_i \ge \hat P_i$ for all $i$, because $(\hat P_i)_{i \ge 1}$ is a subsequence of the non-increasing sequence $(P_i)_{i \ge 1}$. Therefore, 
		\[
		\sum_{i \ge 1} |\log P_i - V_i\log x|\le \sum_{i \ge 1} |\log \hat P_i - V_i\log x| + \sum_{i \ge 1} \log(P_i/\hat P_i)
		\]
		We have that $\prod_{i \ge 1} P_i/\hat P_i = M_x/\hat M_x$. Furthermore, the integer $M_x/\hat M_x$ only contains prime factors whose square divides $N_x$. Thus, $M_x/\hat M_x$ divides $s(N_x)$, and
		\begin{equation}
			\label{eq:l1 dist reduction 1}
			\sum_{i \ge 1} \log(P_i/\hat P_i) = \log(M_x/\hat M_x) \le \log s(N_x),
		\end{equation}
		and thus
		\begin{equation}
			\label{eq:l1 dist reduction 2}
			\sum_{i \ge 1} |\log P_i - V_i\log x|\le \sum_{i \ge 1} |\log \hat P_i - V_i\log x| + \log s(N_x).
		\end{equation}
		
		Next, we use the rearrangement inequality (Lemma \ref{lem:rearrangement-ineq}) to find that
		\begin{align*}
			\sum_{i \ge 1} |\log \hat P_i - V_i\log x| &\le |\log \pextra - L_1\log x|+ \sum_{i \ge 2} |\log \tilde P_i - L_i\log x| \\
			&\le |\log \pextra - L_1\log x|+ \sum_{i \ge 2} |\log Q_i - L_i\log x| +  \sum_{i \ge 2} \log(Q_i/\tilde P_i) ,
		\end{align*}
		where we used that $\tilde{P_i}\le Q_i$ for each $i$. Moreover, we have $\prod_{i \ge 2} Q_i/\tilde P_i = M_x/\hat M_x$, so \eqref{eq:l1 dist reduction 1}  implies that
		\[
		\sum_{i \ge 1} |\log \hat P_i - V_i\log x| 
		\le |\log \pextra - L_1\log x|+ \sum_{i \ge 2} |\log Q_i - L_i\log x| +  \log s(N_x) .
		\]					
		Finally, we also note that $\log \pextra = \log M_x - \sum_{i \ge 2} \log Q_i$ and $L_1 = 1-\sum_{i \ge 2} L_i$. Since we have assumed that $M_x=N_x$, we have
		\[
		\label{eq:l1 dist reduction 4}
		|\log \pextra - L_1\log x| \le \log(x/N_x) + \sum_{i \ge 2} |\log Q_i - L_i\log x|.
		\]
		Combining the two above displayed inequalities with \eqref{eq:l1 dist reduction 2}, we conclude that
		\[
		\sum_{i \ge 1} |\log P_i - V_i\log x|\le \log(x/N_x) + 2\sum_{i \ge 2} |\log Q_i - L_i\log x|   + 2\log s(N_x).
		\]
		To complete the proof, recall that $\log Q_i = h(L_i\log x)$ and $r(t) = |h(t) - t|$, whence
		\[
		\label{eq:l1 dist reduction 6}
		\sum_{i \ge 2} |\log Q_i - L_i\log x| = \sum_{i \ge 2} r(L_i\log x) \le \Theta_x. 
		\]
		This proves Lemma \ref{lem:coupling ineq}. 
	\end{proof}

	\section{Another realization of the GEM distribution}
	\label{sec:GEM}
	
	For the proof of Lemma \ref{lem:properties of Theta} and Proposition \ref{prop:dtv of M and N}, we will need to be more precise as to how the GEM process $\mathbf L$ is sampled in the coupling. The construction we present below is also the one used by Arratia \cite{Arratia02}. 
	
	\begin{deff}[The Poisson Process $\RS$]\label{dfn:Poisson Process R}\ 
	\begin{enumerate}[label=(\alph*)]
		\item We denote by $\RS$ the Poisson process on $\RR_{>0}^2$ that has intensity measure $e^{-wy}\, \mathrm dw\, \mathrm dy$. We may assume that the $w$-coordinates of the points of $\RS$ are all distinct since this happens almost surely.
		\item We index the points of $\RS = \bg\{(W_i, Y_i) : i \in\ZZ\bg\}$ according to the following rules:
			\begin{itemize}
				\item $W_i < W_{i+1}$ for all $i\in\ZZ$;
				\item if we let $S_i\coloneqq \sum_{\ell \ge i} Y_\ell$ for all $i\in\ZZ$, then we have $S_1 \le \log x < S_0$.
			\end{itemize} 
	\end{enumerate} 
	\end{deff}
	
	\begin{rems}
		By the Mapping Theorem (Proposition \ref{prel:map}), projecting $\RS$ on the $w$-axis yields a Poisson Process with intensity $\frac{\mathrm dw}{w}$ and, similarly, projecting $\RS$ on its $y$-axis yields a Poisson Process with intensity $\frac{\mathrm dy}{y}$. Therefore, the $w$-coordinates of the points in $\RS$ have almost surely exactly one limit point at $0$ and they are almost surely unbounded. Hence, the indexing $(W_i,Y_i)$ in part (b) of the above definition is well-defined.
	\end{rems}
	
	The following lemma describes the distribution of the point process $S_i$. 
	
	\begin{lem}[Scale-invariant spacing lemma]
		\label{lem:scale-invariant-spacing-lemma}
		The point process $\{S_i \,:\, i \in \ZZ\}$ is a Poisson process on the positive real line with intensity measure $\frac{\mathrm ds}{s}$.
	\end{lem}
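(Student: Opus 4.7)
The plan is to show that $\mathcal{S} := \{S_i : i \in \ZZ\}$ has the Laplace functional of a Poisson point process (PPP) on $(0, \infty)$ with intensity $ds/s$. The key auxiliary object is the tail-sum process $T(w) := \sum_{W_j > w} Y_j$. By Campbell's formula for $\RS$,
\[
\EE[e^{-\alpha T(w)}] = \exp\!\Bigl(-\int_w^\infty\!\!\int_0^\infty(1 - e^{-\alpha y})e^{-w'y}\,dy\,dw'\Bigr) = \frac{w}{w+\alpha},
\]
so $T(w) \sim \mathrm{Exp}(w)$, and in particular $T(w) < \infty$ a.s. Independence of $\RS$ across disjoint $w$-strips gives $T$ independent (non-stationary) increments in $w$, a property I will use decisively. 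Observe also that $S_i = T(W_i^-) = Y_i + T(W_i)$, where $T(W_i)$ depends only on $\RS \setminus \{(W_i, Y_i)\}$.

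The form and constant of the intensity are then fixed by two observations. For scale invariance, the transformation $(w, y) \mapsto (cw, y/c)$ preserves $e^{-wy}\,dw\,dy$ (unit Jacobian) and the distribution of $\RS$, while sending $S_i \mapsto S_i/c$; hence $\mathcal{S}$ is scale-invariant, and its intensity must be proportional to $ds/s$. The Slivnyak--Mecke formula then pins the constant: taking one marked point,
\[
\EE\!\Bigl[\sum_i f(S_i)\Bigr] = \iint \EE[f(y + T(w))]\,e^{-wy}\,dw\,dy,
\]
where $T(w)$ on the right is computed from an independent fresh copy of $\RS$ (so $\mathrm{Exp}(w)$, independent of $(w, y)$); the substitution $s = y + t$ and Fubini reduce this to $\int_0^\infty f(s)\,ds/s$.

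\textbf{Main step and main obstacle.} To upgrade this first-moment identity to the full Poisson distribution, the plan is to apply the $k$-point Slivnyak--Mecke formula to compute every factorial moment measure of $\mathcal{S}$ and match it with the product $\prod_{j=1}^k \int f_j(s)\,ds/s$ for the target PPP. With the marked points labeled so that $w_1 < \cdots < w_k$, one has
\[
S_{i_j}^{\mathrm{marked}} = y_j + y_{j+1} + \cdots + y_k + T_{\mathrm{fresh}}(w_j),
\]
where $T_{\mathrm{fresh}}(w_j)$ comes from a single independent copy of $\RS$ and inherits the independent-increment structure from Step~1. The main technical task is the telescoping change of variables $s_j := y_j + \cdots + y_k + T_{\mathrm{fresh}}(w_j)$: using the mutual independence (and explicit distribution) of the increments $T_{\mathrm{fresh}}(w_\ell) - T_{\mathrm{fresh}}(w_{\ell+1})$, the integrand factorizes and successive Fubini applications produce the desired product. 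The main obstacle is the combinatorial bookkeeping in this factorization --- the joint distribution of the $T_{\mathrm{fresh}}(w_j)$'s has atoms on the diagonals (each increment vanishing with probability $w_\ell/w_{\ell+1}$, reflecting empty strips), so the factorization must be verified carefully at every stage. An alternative, avoiding factorial moments, is Rényi's void-probability criterion $\PP(\mathcal{S}\cap(a, b) = \emptyset) = a/b$, which one would obtain by analyzing the first down-crossing of $T$ below $b$ together with its overshoot, again leveraging scale invariance.
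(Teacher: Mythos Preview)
The paper does not give its own proof of this lemma; it simply cites \cite[Lemma~7.1]{ABT06}. So there is no in-paper argument to compare against, and your proposal must be assessed on its own terms.

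Your plan is sound and the ingredients are correct. The scale-invariance observation (that $(w,y)\mapsto(cw,y/c)$ preserves the intensity $e^{-wy}\,dw\,dy$ and rescales every $S_i$ by $1/c$) is valid and forces the intensity of $\mathcal{S}$, if Poisson, to be a multiple of $ds/s$. Your Campbell computation $\EE[e^{-\alpha T(w)}]=w/(w+\alpha)$, giving $T(w)\sim\mathrm{Exp}(w)$, is exactly Proposition~\ref{prop:icexponential} of the paper. The one-point Mecke calculation pinning the constant to $1$ is also correct: after substituting $s=y+t$ one is left with $\int_0^\infty\!\int_0^\infty f(s)\,s\,w\,e^{-ws}\,dw\,ds=\int_0^\infty f(s)\,ds/s$.

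The real work, as you identify, is the upgrade from the intensity to the full Poisson law. Your factorial-moment route does go through. After ordering $w_1<\cdots<w_k$ and writing $T_{\mathrm{fresh}}(w_j)=T_{\mathrm{fresh}}(w_k)+\sum_{\ell=j}^{k-1}\Delta_\ell$ with the increments $\Delta_\ell$ mutually independent and independent of $T_{\mathrm{fresh}}(w_k)$, one integrates from the inside out: the innermost integral over $(y_k,w_k)$ with $s_k:=y_k+T_{\mathrm{fresh}}(w_k)$ produces the factor $\int f_{\sigma(k)}(s_k)\,ds_k/s_k$ exactly as in your $k=1$ computation, and the remaining integral has the same structure with $k-1$ marked points. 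The atoms of $\Delta_\ell$ at zero that concern you are harmless here --- they are absorbed upon integrating in $w_\ell$, since the Laplace transform $\EE[e^{-\alpha\Delta_\ell}]=w_\ell(w_{\ell+1}+\alpha)/\bigl(w_{\ell+1}(w_\ell+\alpha)\bigr)$ is perfectly smooth in those variables.

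One caution about your alternative: R\'enyi's criterion requires the void probabilities on all finite unions of intervals, not merely on single intervals. Establishing $\PP(\mathcal{S}\cap(a,b)=\emptyset)=a/b$ alone does not suffice without also proving independence over disjoint intervals, which is essentially the Poisson property itself. The factorial-moment route is the cleaner self-contained path; as written, your proposal is an accurate and well-structured plan rather than a finished proof.
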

	
	\begin{proof}
	See \cite[Lemma 7.1]{ABT06}. 
	\end{proof}
	
	Using this lemma, we have the following description of the GEM distribution. 
	
	\begin{prop}[Arratia, \cite{Arratia02}]
		\label{prop:gemfrompi}
		The process $\bg(1-\frac{S_1}{\log x}, \frac{Y_1}{\log x}, \frac{Y_2}{\log x}, \ldots\bg)$ follows a GEM distribution.
	\end{prop}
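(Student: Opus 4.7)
The plan is to invoke the scale-invariant spacing lemma (Lemma~\ref{lem:scale-invariant-spacing-lemma}) to turn $\{S_i\}_{i\in\ZZ}$ into a Poisson process of a very simple kind, and then to read off the GEM stick-breaking variables from its gap structure by memorylessness. Concretely, Lemma~\ref{lem:scale-invariant-spacing-lemma} identifies $\{S_i\}_{i\in\ZZ}$ as a Poisson process on $(0,\infty)$ of intensity $\mathrm ds/s$; applying the Mapping Theorem with the bijection $s\mapsto\log s$ then shows that $T_i\coloneqq\log S_i$ defines a unit-rate Poisson process on $\RR$ (the pushforward of $\mathrm ds/s$ under $s\mapsto\log s$ is Lebesgue measure). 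Since $i\mapsto S_i$ is decreasing, so is $i\mapsto T_i$, and the condition $S_1\le\log x<S_0$ becomes $T_1\le\log\log x<T_0$: thus $T_1$ is the largest point of this unit-rate Poisson process that does not exceed the deterministic threshold $\log\log x$.

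Next I would extract the underlying uniforms. Setting $E_1\coloneqq\log\log x-T_1$ and $E_{i+1}\coloneqq T_i-T_{i+1}$ for $i\ge 1$, the memoryless property of the unit-rate Poisson process on $\RR$ (applied as one scans downward from the deterministic point $\log\log x$) yields that $(E_i)_{i\ge 1}$ are i.i.d.\ $\mathrm{Exp}(1)$, so that $U_i\coloneqq 1-e^{-E_i}$ are i.i.d.\ $\mathrm{Uniform}[0,1]$. The only subtlety here is that the index $1$ on the $T_i$'s is defined via a random condition, but conditioning a unit-rate Poisson process on $\RR$ on the existence of some point below a fixed threshold is vacuous (it holds almost surely), and memorylessness then supplies independence of the successive gaps.

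Finally I would verify that $(U_i)_{i\ge 1}$ plays the role of the GEM stick-breaking sequence for the process $\bigl(1-S_1/\log x,\,Y_1/\log x,\,Y_2/\log x,\dots\bigr)$. From $e^{-E_1}=S_1/\log x$ we get $U_1=1-S_1/\log x$, matching the first component. For $j\ge 2$, using $1-U_i=S_i/S_{i-1}$ for $i\ge 2$, we compute
\[
U_j\prod_{i=1}^{j-1}(1-U_i)=\Bigl(1-\frac{S_j}{S_{j-1}}\Bigr)\cdot\frac{S_1}{\log x}\cdot\prod_{i=2}^{j-1}\frac{S_i}{S_{i-1}}=\Bigl(1-\frac{S_j}{S_{j-1}}\Bigr)\cdot\frac{S_{j-1}}{\log x}=\frac{Y_{j-1}}{\log x},
\]
matching the remaining components. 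There is no genuine obstacle beyond these standard Poisson manipulations; the only care needed is in the bookkeeping around the reversed indexing and the random event defining $T_1$.
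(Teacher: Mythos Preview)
Your proof is correct and follows essentially the same approach as the paper: both apply the scale-invariant spacing lemma, push forward via a logarithmic map to obtain a unit-rate Poisson process on $\RR$, extract i.i.d.\ exponential gaps by memorylessness, and convert to uniforms to verify the stick-breaking identity. The only cosmetic difference is that the paper uses the map $s\mapsto\log\log x-\log s$ (so that the transformed process is increasing with $T_x(S_1)\ge 0>T_x(S_0)$), whereas you use $s\mapsto\log s$ and scan downward; your explicit telescoping verification of $U_j\prod_{i<j}(1-U_i)=Y_{j-1}/\log x$ is left implicit in the paper.
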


	\begin{proof}
		Applying the map $T_x(s) \coloneqq \log_2 x - \log s$ to the points of $\{S_i \,:\, i \in \ZZ\}$ yields a homogeneous Poisson process on the real line with constant rate $1$, by the Mapping Theorem (Proposition \ref{prel:map}) and Lemma \ref{lem:scale-invariant-spacing-lemma}. Furthermore, we have $T_x(S_0) < 0 \le T_x(S_1)$ and $(T_x(S_i))_{i \in \ZZ}$ increasing. Therefore, $T_x(S_1)$ and $T_x(S_{i+1}) - T_x(S_i)$ for all $i \ge 1$ are independent exponential random variables of parameter $1$. If $X$ is a standard exponential random variable, then $1-e^{-X}$ is a uniform random variable in $[0, 1]$. We conclude that $1-\frac{S_1}{\log x},\ \frac{Y_1}{S_1},\ \frac{Y_2}{S_2},\, \ldots$ are independent uniform random variables in $[0, 1]$. The proposition follows by the characterization of the GEM distribution described in the introduction.
	\end{proof}
	
	For the next sections and the proof below, we will assume that the process $\mathbf L = (L_1, L_2, \ldots)$ sampled for our coupling was determined by $\RS$ by defining $L_1 \coloneqq 1-\frac{S_1}{\log x}$ and $L_j \coloneqq \frac{Y_{j-1}}{\log x}$ for $j \ge 2$. In this setting, we now have that 
	\begin{equation}
		\label{eq:J_x alternative definition}
			J_x = \prod_{i \ge 1} e^{h(Y_i)}.
	\end{equation}

	\subsection{Proof of Lemma \ref{lem:properties of Theta}}
		We conclude this section by using the realization of the GEM described here to prove Lemma \ref{lem:properties of Theta}. Note that 
		\begin{equation}
			\label{eq:theta bounded}
			\Theta_x = r(L_1\log x) + \sum_{i \ge 1} r(Y_i) \le \Theta_\infty + O(1)
		\end{equation}
		with $\Theta_\infty \coloneqq \sum_{i \in \ZZ} r(Y_i)$. With Campbell's Theorem (Proposition \ref{prel:campbell}), we directly compute that  
		\begin{equation}
			\label{eq:mgf theta infty}
			\EE[e^{\alpha \Theta_\infty}] = \exp\!\bgggg(\!\int_{0}^\infty \frac{e^{\alpha r(y)} - 1}{y}\, \mathrm dy\bgggg).
		\end{equation}
		This integral is convergent for all fixed $\alpha > 0$ because $e^{\alpha r(y)} -1 \ll_\alpha r(y) \ll\min\{y,y^{-2}\}$. Combining this fact with \eqref{eq:theta bounded} proves that $\EE[e^{\alpha \Theta_x}] \ll_\alpha 1$. We have thus established Lemma \ref{lem:properties of Theta}.

	\section{An integer-friendly version of $J_x$}
	\label{sec:integer friendly J}
	
	Let $\Lambda$ be the von Mangoldt function, that is to say
	\[
	\Lambda(n) = \begin{cases}
		\log p &\text{if $n = p^k$ for some prime power $p^k$,} \\
		0 &\text{otherwise.}
	\end{cases}
	\]
	Recall the Poisson Process $\RS$ given in Definition \ref{dfn:Poisson Process R}. We then define 
	\[
	\RS^* \coloneqq \bgg\{     \bgg( WY/ h(Y), e^{h(Y)}  \bgg): (W,Y)\in \RS,\ Y > e^{-\gamma}\bgg\}. 
	\]
	Without loss of generality, we may assume that the quantities $WY/h(Y)$ with $(W,Y)\in\RS$ and $Y>e^{-\gamma}$ are all distinct. By the Mapping Theorem (Proposition \ref{prel:map}) applied to the map $(w,y)\mapsto (wy/h(y),e^{h(y)})$ on the Poisson process $\RS$ restricted to $\RR_{>0} \times \RR_{>e^{-\gamma}}$, the random set $\RS^*$ is a Poisson process on the space $\RR_{>0}\times \{\text{prime powers}\}$ with mean measure $\mu^*$ satisfying 
	\[
		\mu^*(B \times \{q\}) = \int_{B} \frac{\Lambda(q)}{q^{1+t}} \, \mathrm dt
	\]
	for any $B\subseteq\RR_{>0}$ and any $q\in\NN$. Note that the $w$-coordinates of the points in $\RS$ restricted to $\RR_{>0} \times \RR_{>e^{-\gamma}}$ are now bounded with probability one. For each such realization of the points of $\RS^*$, there is a unique labeling of them as $\{(T^*_i, Q^*_i)\,:\,i \in \ZZ_{\le K}\}$ in such a way that the following properties hold:
	\begin{itemize}
		\item  $T^*_{i-1} <T^*_{i}$ for all $i\in\ZZ_{\le K}$;
		\item $\prod_{i=1}^K  Q_i^*\le x< \prod_{i=0}^K Q_i^*$.
	\end{itemize}
	(Note that $K$ is a random variable and is not fixed.) We then define the random integer
	\[
	 J_x^*\coloneqq \prod_{i=1}^K Q^*_i. 
	 \]
	 One advantage of introducing $J_x^*$ is that the computation for the distribution of $J_x^*$ can be done more easily and precisely than the distribution of $J_x$. We perform this calculation in this section. We will then see in the next section that the probability that $J_x$ is different from $J_x^*$ is small enough to be negligible in the calculation of the total variation distance in Section \ref{sec:dtv}. 
	 
	 This random integer $J^*_x$ was used by Arratia in \cite{Arratia02}, and he estimated its distribution in his Lemma 2. For the sake of completeness, we give a proof for this estimation.	
	
	\begin{lem}[Arratia \cite{Arratia02}]
		\label{lem:probaj}
		For $x \ge 2$ and $1 \le j \le x$, we have
		\[
		\PP\bg[J_x^*=j\bg] = \frac{1}{j\log x}\left(1+\bigoo{\frac{1}{\log x}}\right).
		\]
	\end{lem}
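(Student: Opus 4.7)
The plan is to apply the Palm--Mecke formula to the Poisson process $\RS^*$, evaluate the resulting inner probability using Lemma~\ref{lem:poisson-to-geom}, and extract the asymptotic from the Prime Number Theorem. To begin, the event $\{J_x^* = j\}$ holds if and only if (almost surely uniquely) there is a point $(T_0^*, Q_0^*)$ of $\RS^*$ with $Q_0^* > x/j$ and $\prod_{(t',q')\in\RS^*,\, t' > T_0^*} q' = j$. Applying the Mecke formula then yields
\[
    \PP[J_x^* = j] = \int_0^\infty \sum_{q > x/j} \frac{\Lambda(q)}{q^{1+T}} \cdot \PP\!\left[\prod_{i:\, T_i^* > T} Q_i^* = j\right] dT.
\]

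Next I would evaluate the inner probability. For each prime $p$ and each $k \ge 1$, the count $N_{p,k}(T) \coloneqq \#\{i : T_i^* > T,\, Q_i^* = p^k\}$ is Poisson with mean $\tfrac{1}{k p^{k(1+T)}}$, and these counts are jointly independent. Applying Lemma~\ref{lem:poisson-to-geom} with $\lambda = p^{-(1+T)}$ shows that $\sum_k k N_{p,k}(T)$ is geometric with mass $(1-p^{-(1+T)}) p^{-\ell(1+T)}$ at $\ell$. Multiplying across primes via the Euler product,
\[
    \PP\!\left[\prod_{i:\, T_i^* > T} Q_i^* = j\right] = \frac{1}{\zeta(1+T)\, j^{1+T}},
\]
so that $\PP[J_x^* = j] = \tfrac{1}{j} \int_0^\infty \tfrac{j^{-T}}{\zeta(1+T)} \sum_{q > x/j} \tfrac{\Lambda(q)}{q^{1+T}} \, dT$.

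The remaining step is analytic. Partial summation from Chebyshev's estimate $\psi(y) = y + O(y/\log y)$ gives, for $y \ge 2$, the asymptotic $\sum_{q > y} \Lambda(q)/q^{1+T} = y^{-T}/T + O(y^{-T}/(T\log y))$. With $y = x/j$, the factor $j^{-T}(x/j)^{-T} = x^{-T}$ collapses cleanly, and combining with the Laurent expansion $1/(T\zeta(1+T)) = 1 - \gamma T + O(T^2)$, the integrand reduces to $x^{-T}(1 + O(T) + O(1/\log(x/j)))$. Evaluating $\int_0^\infty x^{-T} dT = 1/\log x$ delivers the main term $1/(j\log x)$ with a multiplicative error $O(1/\log x) + O(1/\log(x/j))$.

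The chief obstacle is obtaining this error uniformly when $j$ is close to $x$ and $\log(x/j)$ fails to grow. In that regime (say $j \ge x/2$), the bound $q > x/j$ is automatic since $x/j < 2$, so the inner sum equals $-\zeta'(1+T)/\zeta(1+T)$; multiplied by $1/\zeta(1+T)$ this becomes $\tfrac{d}{dT}\bigl(1/\zeta(1+T)\bigr)$, and integrating by parts against $j^{-T}$ (boundary terms vanish) produces $\PP[J_x^* = j] = \frac{\log j}{j}\int_0^\infty j^{-T}/\zeta(1+T)\,dT$. Using $1/\zeta(1+T) = T + O(T^2)$ near $T = 0$, a routine Laplace-style estimate shows this equals $\tfrac{1}{j\log j}(1+O(1/\log x))$, matching the claim since $\log j = \log x + O(1)$ in this range. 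Gluing the two arguments at a cutoff such as $j = x^{1/2}$ yields the uniform estimate.
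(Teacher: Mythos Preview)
Your strategy—Mecke equation, Lemma~\ref{lem:poisson-to-geom} to get the geometric law, Euler product to reach $j^{-1-T}/\zeta(1+T)$, then partial summation—is exactly the paper's, and those steps are fine. The problem is in the analytic endgame.

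Your partial summation with $\psi(y)=y+O(y/\log y)$ correctly gives a multiplicative error $O(1/\log(x/j))$. This is $O(1/\log x)$ only when $\log(x/j)\gg\log x$, i.e.\ when $j\le x^{1-c}$ for some fixed $c>0$. Your separate patch via $-\zeta'/\zeta$ and integration by parts is valid only when $x/j<2$, i.e.\ $j>x/2$. So the proposed ``gluing at $j=x^{1/2}$'' leaves the entire range $x^{1/2}<j\le x/2$ uncovered: there, $1/\log(x/j)$ lies in $(2/\log x,\,1/\log 2]$ and your first estimate gives only an $O(1)$ relative error, while the second argument does not apply because the condition $q>x/j$ is not vacuous. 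No choice of cutoff repairs this, since any estimate of the shape $\psi(y)=y+O(y\cdot g(\log y))$ yields a relative error $O(g(\log(x/j)))$ that stays bounded away from zero when $x/j$ is bounded.

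The paper sidesteps the issue by doing the partial summation against the \emph{Mertens} sum $S(u)=\sum_{q\le u}\Lambda(q)/q=\log u+O(1)$ rather than $\psi$. This produces
\[
\sum_{q>x/j}\frac{\Lambda(q)}{q^{1+t}}=\frac{(x/j)^{-t}}{t}\bigl(1+O(t)\bigr)
\]
uniformly for all $x/j\ge 1$; the error no longer depends on $x/j$. After multiplying by $j^{-1-t}/\zeta(1+t)$ and integrating, the $O(t)$ becomes $O(1/(\log x)^2)$ uniformly in $j$, and no case analysis is needed. Replacing your $\psi$-based step with this Mertens-based one closes the gap.
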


	\begin{proof}
		Let $t>0$ and let $q$ be a prime power, and consider the random variable $N_q(t)$ that counts the number of points $(T^*_i, Q^*_i)$ with $T^*_i > t$ and $Q^*_i = q$. Note that $N_q(t)$ has Poisson distribution with parameter $\frac{\Lambda(q)}{q^{1+t}\log q}$. Moreover, let $I^*_t \coloneqq \prod q^{N_q(t)}$ with the product being over all prime powers $q$. On one hand, when $t$ is fixed, the expectation $\EE[(I^*_t)^{-s}]$ can be expanded as a Dirichlet series whose $i$\textsuperscript{th} coefficient equals $\PP[I^*_t = i]$. This Dirichlet series converges absolutely for $\Re(s)\ge0$. On the other hand, we have
		\[
		\EE[(I^*_t)^{-s}] = \prod_q \EE\bg[q^{-sN_q(t)}\bg]  = \prod_q \exp\!\bgg(\tfrac{\Lambda(q)}{q^{1+t}\log q} (q^{-s}-1)\bgg) = \frac{\zeta(1+t+s)}{\zeta(1+t)}.
		\]
		It follows that $\PP[I^*_t = i] = \frac{i^{-1-t}}{\zeta(1+t)}$ for all $i \ge 1$. All points of $\RS^*$ have distinct $T^*$-coordinates with probability one. In this situation, we have that $J_x^* = j$ if, and only if, there exists exactly one point $(T^*, Q^*) \in \RS^*$ such that $I^*_{T^*} = j$ and $Q^* > \frac{x}{j}$. Thus, we have 
		\[
			\one _{J_x^* = j} = \sum_{(T^*, Q^*) \in \RS^*} \one _{I^*_{T^*} = j \text{ and } Q^{*}> x/j}
		\]
		almost surely. Taking expectations on both sides and using the Mecke equation (Proposition \ref{prel:mecke}), we obtain the distribution of $J_x^*$:
		\begin{equation}
			\label{eq:probaJ}
			\PP\bg[J_x^*=j\bg]
				= \riemint{0}{\infty}{\Bigg(\sum_{q > x/j} \frac{\Lambda(q)}{q^{1+t}}\Bigg)\cdot \frac{j^{-1-t}}{\zeta(1+t)}}{t}.
		\end{equation}
		We want to estimate this integral. Let $S(u) \coloneqq \sum_{q \le u} \frac{\Lambda(q)}{q}$. We have $S(u) = \log u + O(1)$ for $u \ge 1$ by Mertens's estimate \cite[Theorem 3.4(a)]{Koukoulo19}. We use partial summation to get
		\[
			\sum_{q > x/j} \frac{\Lambda(q)}{q^{1+t}} = \riemint{x/j}{\infty}{u^{-t}}{S(u)} = \frac{(x/j)^{-t}}{t}(1+O(t)).
		\]
		for all $t > 0$. By putting this estimate in \eqref{eq:probaJ}, we have
		\[
			\PP[J_x^* = j] = \frac{1}{j} \riemint{0}{\infty}{\frac{x^{-t}(1+O(t))}{t\cdot \zeta(1+t)}}{t} .
		\]
		Since $\zeta(1+t)\ge1$ for all $t>0$, the portion of the integral over $t\ge1$ is $\ll 1/(x\log x)$. On the other hand, if $t\in(0,1]$, we have $1/\zeta(1+t)=t+O(t^2)$. We conclude that
		\[
			\PP[J_x^* = j] = \frac{1}{j} \int_0^1 x^{-t}(1+O(t)) \,\mathrm dt + O\bggg(\frac{1}{jx\log x}\bggg) = \frac{1}{j\log x}\bggg(1+O\bggg(\frac{1}{\log x}\bggg)\bggg).
		\]
		This concludes the proof.
	\end{proof}

	%%%% When are JP and JR different %%%%
	
	\section{When $J_x$ and $J_x^*$ are different}
	\label{sec:different J and J*}

	To prove Proposition \ref{prop:dtv of M and N}, we must get a hold of the distribution of $J_x$. Since we have a good approximation for the distribution of $J_x^*$, it will be enough, for our purposes, to show that the event $\{J_x\neq J_x^*\}$ occurs with low probability. 
	
	For any $t > 0$, consider the random variable 
	\[
		I_t \coloneqq \sum_{(W,Y)\in \RS} Y\cdot \one_{W>t} .
	\]
	We compute below the distribution of $I_t$.
	
	\begin{prop}[Arratia, \cite{Arratia02}]
		\label{prop:icexponential}
		For any fixed $t > 0$, the random variable $I_t$ follows an exponential distribution of parameter $t$, i.e. $\prob{I_t > y} = e^{-ty}$ for all $y > 0$.
	\end{prop}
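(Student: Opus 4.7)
The plan is to identify $I_t$ by computing its Laplace transform via Campbell's Theorem (Proposition~\ref{prel:campbell}). Applied to the Poisson process $\RS$ with the non-negative integrand $(w,y)\mapsto \lambda y\cdot\one_{w>t}$, Campbell's formula yields
\[
\EE\bg[e^{-\lambda I_t}\bg] = \exp\bgggg(-\int_0^\infty\!\int_t^\infty (1-e^{-\lambda y})\, e^{-wy}\, \mathrm dw\, \mathrm dy\bgggg)
\]
for every $\lambda>0$. Note in passing that $I_t$ is almost surely finite, since $\EE[I_t] = \int_t^\infty\!\int_0^\infty y\, e^{-wy}\, \mathrm dy\, \mathrm dw = \int_t^\infty \mathrm dw/w^2 = 1/t$, which is already a reassuring sign that the correct answer is $\mathrm{Exp}(t)$.

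Next I would evaluate the double integral in the exponent by integrating in $w$ first: since $\int_t^\infty e^{-wy}\, \mathrm dw = e^{-ty}/y$, the exponent reduces to
\[
-\int_0^\infty (1-e^{-\lambda y})\,\frac{e^{-ty}}{y}\, \mathrm dy = -\int_0^\infty \frac{e^{-ty}-e^{-(t+\lambda)y}}{y}\, \mathrm dy = -\log\frac{t+\lambda}{t},
\]
where the last step is the standard Frullani identity. Hence $\EE[e^{-\lambda I_t}] = t/(t+\lambda)$, which is precisely the Laplace transform of an exponential random variable of parameter $t$. By the uniqueness of Laplace transforms for non-negative random variables, we conclude that $I_t\sim\mathrm{Exp}(t)$; in particular $\PP[I_t>y]=e^{-ty}$ for all $y>0$, as required.

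There is no substantive obstacle here: the non-negativity of the integrand makes the Laplace-functional form of Campbell's theorem directly applicable, and everything collapses to the one-line Frullani computation. An alternative route would be to apply the Mapping Theorem to the restriction of $\RS$ to $\{w>t\}$ in order to describe the marginal point process of the $Y$-coordinates directly, but the Laplace-transform path above is the most economical and dovetails with the way $I_t$ will be used in the next section.
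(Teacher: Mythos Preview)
Your argument is correct and essentially identical to the paper's: both apply Campbell's Theorem to compute the transform of $I_t$ and then identify the exponential law. The paper states the moment generating function $\EE[e^{sI_t}]=t/(t-s)$ for $\Re(s)<t$ in one line, whereas you compute the Laplace transform $\EE[e^{-\lambda I_t}]=t/(t+\lambda)$ and spell out the Frullani step, but this is the same proof.
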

	
	\begin{proof}
		A direct application of Campbell's Theorem (Proposition \ref{prel:campbell}) implies that $\EE[e^{sI_t}]=t/(t-s)$ for $\Re(s)<t$, which agrees with the moment generating function of an exponential distribution of parameter $t$. This completes the proof.
	\end{proof}
	
	Let $\eta$ be the smallest positive constant satisfying
	\[
		\frac{a}{h(a)}\cdot \frac{h(b)}{b} \le 1 + \frac{\eta}{\min\{a,b\}^2} .
	\]
	for all $a, b > e^{-\gamma}$. Such a constant must exist by \eqref{eq:h(t) estimate}. In addition, let
	\[
	r_0\coloneqq \sup_{y > 0} r(y). 
	\]
	Let us then define the following events: 
	
	\smallskip
	
	\begin{itemize}[itemsep=0.5em]
		\item $\EC_1\coloneqq \bg\{ S_1<\log x-R_x-r_0,\ S_0>\log x+R_x+2r_0\bg\}$, where $R_x\coloneqq \sum_{i\ge2} r(Y_i)$.

		\item $\EC_2$ is the event where $\frac{W_i}{W_0} > 1 + \frac{\eta}{\min\{Y_0,Y_i\}^2}$ for all $i \ge 1$.
		
		\item $\EC_3$ is the event where $\frac{W_0}{W_i} > 1 + \frac{\eta}{\min\{Y_0,Y_i\}^2}$ for all $i \le -1$.
	\end{itemize}
	The variable $R_x$ depends on the value of $x$ since the labeling of points in $\mathscr R$ change as $x$ grows, even if $\mathscr R$ stays fixed.

	\begin{lem}
		\label{lem:decomposition J neq J*}
		For $x > 1$, we have $\EC_1\cap \EC_2\cap \EC_3\subseteq \{ J_x = J_x^*\}$. In particular,
		\[
			\PP\bg[J_x \ne J_x^*\bg] \le \PP[\EC_1^c] + \PP[\EC_1 \cap \EC_2^c] + \PP[\EC_1 \cap \EC_3^c].
		\]
	\end{lem}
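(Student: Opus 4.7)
The plan is to prove the set-theoretic inclusion $\EC_1 \cap \EC_2 \cap \EC_3 \subseteq \{J_x = J_x^*\}$; the probability bound is then a trivial union bound on the complement. The strategy is to show that, under these three events, the natural bijection between points of $\RS$ with $Y$-coordinate above $e^{-\gamma}$ and points of $\RS^*$ respects both the ordering used to relabel $\RS^*$ and the threshold condition defining $J_x^*$, so that the boundary point $(W_0, Y_0)$ of $\RS$ ends up as the boundary point of $\RS^*$.

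First I would check that $Y_0 > e^{-\gamma}$ on $\EC_1$, so that $(W_0, Y_0)$ actually contributes a point to $\RS^*$. Since $h \equiv 0$ on $(0, e^{-\gamma}]$, the constant $r_0 = \sup_{y>0} r(y)$ is at least $r(e^{-\gamma}) = e^{-\gamma}$; subtracting the two inequalities in $\EC_1$ then gives $Y_0 = S_0 - S_1 > 2R_x + 3r_0 \ge 3 e^{-\gamma}$. Write $(T', Q') = (W_0 Y_0/h(Y_0), e^{h(Y_0)})$ for the image of $(W_0, Y_0)$ in $\RS^*$, and let $j^*$ denote its label in the canonical relabeling $(T^*_j, Q^*_j)_{j \in \ZZ}$.

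Next I would use $\EC_2$, $\EC_3$ and the defining property of $\eta$ to show that the relabeling of $\RS^*$ is order-compatible with the $\RS$-indexing near the boundary: for every $i \ge 1$ with $Y_i > e^{-\gamma}$, the image of $(W_i, Y_i)$ in $\RS^*$ has $T^*$-coordinate strictly above $T'$, and symmetrically for $i \le -1$. Indeed, $W_i Y_i / h(Y_i) > W_0 Y_0 / h(Y_0)$ rearranges to $W_i/W_0 > (Y_0/h(Y_0)) \cdot (h(Y_i)/Y_i)$, the right-hand side is at most $1 + \eta/\min\{Y_0, Y_i\}^2$ by the definition of $\eta$, and $\EC_2$ strictly beats this bound (the argument with $\EC_3$ is symmetric). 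Hence the labels $j > j^*$ in $\RS^*$ correspond to exactly the $\RS$-indices $i \ge 1$ with $Y_i > e^{-\gamma}$, so
\[
\prod_{j > j^*} Q^*_j = \prod_{\substack{i \ge 1 \\ Y_i > e^{-\gamma}}} e^{h(Y_i)} = \prod_{i \ge 1} e^{h(Y_i)} = J_x,
\]
using once more that $h \equiv 0$ on $(0, e^{-\gamma}]$.

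Finally I would show that $j^* = 0$, which yields $J_x^* = \prod_{j \ge 1} Q^*_j = J_x$. By the uniqueness of the canonical relabeling of $\RS^*$, this reduces to verifying $J_x \le x < J_x \cdot e^{h(Y_0)}$. For the upper bound, $\log J_x = \sum_{i \ge 1} h(Y_i) \le S_1 + r(Y_1) + R_x \le S_1 + r_0 + R_x$, and the first inequality in $\EC_1$ gives $\log J_x < \log x$. For the lower bound, $\log J_x + h(Y_0) = \sum_{i \ge 0} h(Y_i) \ge S_0 - r(Y_0) - r(Y_1) - R_x \ge S_0 - 2r_0 - R_x$, and the second inequality in $\EC_1$ gives $\log x < \log J_x + h(Y_0)$. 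The main technical obstacle is the bookkeeping between the two indexings of $\RS$ and $\RS^*$: the events $\EC_2$ and $\EC_3$ are engineered so that the $T^*$-order agrees with the $W$-order in a neighborhood of the boundary point, while the margins $R_x + r_0$ and $R_x + 2r_0$ built into $\EC_1$ are exactly what is needed to absorb the perturbations $|h(Y_i) - Y_i| = r(Y_i)$ and align the two threshold conditions.
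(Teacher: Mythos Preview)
Your proof is correct and follows essentially the same approach as the paper's: establish $Y_0 > e^{-\gamma}$, use $\EC_2$, $\EC_3$ and the definition of $\eta$ to show the $T^*$-ordering agrees with the $W$-ordering relative to index $0$, deduce $\prod_{j>j^*} Q_j^* = J_x$, and then use the margins in $\EC_1$ to verify the threshold inequality $J_x \le x < J_x e^{h(Y_0)}$ forcing $j^*=0$. The only cosmetic difference is that you obtain $Y_0 > e^{-\gamma}$ directly from $S_0 - S_1 > 2R_x + 3r_0 \ge 3e^{-\gamma}$, whereas the paper first derives $J_x < x < J_x e^{h(Y_0)}$ and reads off $Y_0 > e^{-\gamma}$ from the fact that $h(Y_0)>0$.
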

	
	\begin{proof} Recall that $J_x=\prod_{i\ge1}e^{h(Y_i)}$ and assume that $\mathcal E_1$ occurs. Then 
		\begin{equation}
			\label{eq:e1 implies J < x}
			\log J_x = \sum_{i=1}^{\infty} h(Y_i) \le \sum_{i \ge 1} \bg(Y_i + r(Y_i)\bg) \le S_1 + R_x  +r_0 < \log x,
		\end{equation}
		since $r(Y_1)\le r_0$. Similarly, we have 
		\[
			\log\!\bg(J_x\cdot e^{h(Y_0)}\bg) = \sum_{i=0}^{\infty} h(Y_i) \ge \sum_{i \ge 0} \bg(Y_i - r(Y_i)\bg) \ge S_0 - R_x -2r_0 > \log x.
		\] 		
		Therefore, $\EC_1$ implies the inequalities 
		\begin{equation}
			\label{eq:J_x inequalities under E_1}
			J_x < x  < J_x e^{h(Y_0)} .
		\end{equation} 
		In particular, we must have $Y_0>e^{-\gamma}$.
		
		Assume now further that $\EC_2$ and $\EC_3$ also occur. We claim that this implies $J_x = J_x^*$. Let $B_+$ be the set of integers $i \ge 1$ such that $Y_i > e^{-\gamma}$, and let $B_-$ be the set of integers $i\le-1$ such that $Y_i>e^{-\gamma}$. 
		By our assumption that $\EC_2\cap\EC_3$ occurs and by the definition of $\eta$, we have
		\begin{equation}
			\label{eq:preserving order 1}
		\frac{W_iY_i}{h(Y_i)} > 		\frac{W_0Y_0}{h(Y_0)} \qquad\text{if}\ i\in B_+
		\end{equation}
		and, similarly, 
		\begin{equation}
			\label{eq:preserving order 2}
		\frac{W_iY_i}{h(Y_i)} <	\frac{W_0Y_0}{h(Y_0)} \qquad\text{if}\ i\in B_-. 
		\end{equation}
		Now, let $K \ge 0$ and $(T_j^*,Q_j^*)$ with $j\in\ZZ_{\le K}$ be the indexing of points of $\RS^*$ given in Section \ref{sec:integer friendly J}, that is to say we have $T_{j-1}^*<T_{j}^*$ for all $j \in \ZZ_{\le K}$, and such that $\prod_{j=0}^K Q_j^*>x \ge \prod_{j=1}^K Q_j^*$. Let $j_0\in\ZZ_{\le K}$ be such that $T_{j_0}^* = W_0Y_0/h(Y_0)$ (which exists because $Y_0>e^{-\gamma}$). Using relations \eqref{eq:preserving order 1} and \eqref{eq:preserving order 2}, we find that 
		\[
		\prod_{j_0 < j \le K} Q_j^*=\prod_{i\in B_+} e^{h(Y_i)} = \prod_{i\ge1} e^{h(Y_i)} = J_x .
		\] 
		On the other hand, we have $\prod_{j_0 \le j \le K} Q_j^*=e^{h(Y_0)} J_x$. Hence, using \eqref{eq:J_x inequalities under E_1}, we find that $\prod_{j_0 < j \le K}Q_j^*<x<\prod_{j_0 \le j \le K}Q_j^*$. In particular, $j_0=0$ and thus $J_x^*=\prod_{j_0 < j \le K} Q_j^*=J_x$, as claimed. 
	\end{proof}

	The following lemma requires a lot more care than in Arratia's paper \cite{Arratia02} since he only needed $\PP[J \ne J^*] \ll \frac{\log_2 x}{\log x}$ to be true for his coupling. 
	
	\begin{lem}
		\label{lem:prob e1c}
		We have $\PP[\EC_1^c] \ll \frac{1}{\log x}$ for $x\ge2$.
	\end{lem}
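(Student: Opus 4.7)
The plan is to apply a union bound $\PP[\EC_1^c] \le \PP[A_-] + \PP[A_+]$, where $A_- := \{S_1 \ge \log x - R_x - r_0\}$ and $A_+ := \{S_0 \le \log x + R_x + 2r_0\}$. By Lemma \ref{lem:scale-invariant-spacing-lemma}, $\{S_i\}_{i\in\ZZ}$ is a Poisson process on $(0,\infty)$ with intensity $\dee s/s$, and a direct computation shows that $S_1$ is uniformly distributed on $(0,\log x]$ with density $1/\log x$, that $S_0$ has density $(\log x)/s^2$ on $(\log x,\infty)$, and that $S_0$ and $S_1$ are in fact independent.

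For $A_+$ the key observation is independence: $R_x$ is a function of $\{S_i\}_{i \ge 2}$ and hence of $\{S_i\} \cap (0,\log x]$, whereas $S_0$ depends only on $\{S_i\} \cap (\log x,\infty)$, so the independence of disjoint Poisson restrictions gives $R_x \perp S_0$. Conditioning on $R_x$ and using $\PP[S_0 \le \log x + c] = c/(\log x + c)$ for $c \ge 0$ yields
\[
\PP[A_+] = \EE\left[\frac{R_x + 2r_0}{\log x + R_x + 2r_0}\right] \le \frac{\EE[R_x] + 2r_0}{\log x} \ll \frac{1}{\log x},
\]
where $\EE[R_x] \ll 1$ follows from Campbell's theorem (Proposition \ref{prel:campbell}) applied to the $Y$-projection of $\RS$, together with the bound $r(y) \ll \min\{y,y^{-2}\}$ coming from \eqref{eq:h(t) estimate}.

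The harder term is $\PP[A_-]$, since $R_x$ and $S_1$ are both functions of $\{S_i\} \cap (0,\log x]$ and are genuinely correlated. The plan is to condition on $S_1 = s_1$ and use the Palm property of Poisson processes: given $S_1 = s_1$, the sub-process $\{S_i\}_{i \ge 2}$ is distributed as an unconditional Poisson process on $(0,s_1)$ with intensity $\dee s/s$, and $R_x$ is then the sum of $r$ applied to its consecutive gaps. The key device is to dominate $R_x$ by a quantity whose conditional distribution given $S_1 = s_1$ does not depend on $s_1$: extend this restricted sub-process by grafting on an independent Poisson process on $(s_1,\infty)$ with intensity $\dee s/s$, producing a fresh Poisson process $\mathcal{P}^*$ on $(0,\infty)$. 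Since the gaps of the restriction form a subset of the gaps of $\mathcal{P}^*$, this yields the pointwise domination $R_x \le \tilde R$, where $\tilde R$ denotes the sum of $r$ over all gaps of $\mathcal{P}^*$. By the Mapping theorem (as exploited in Section \ref{sec:GEM}), the gaps of $\mathcal{P}^*$ themselves form a Poisson process on $(0,\infty)$ with intensity $\dee y/y$, and so Campbell's theorem yields
\[
\EE\left[e^{\alpha \tilde R}\right] = \exp\!\left(\int_0^\infty \frac{e^{\alpha r(y)}-1}{y}\dee y\right) < \infty
\]
for every fixed $\alpha > 0$, the integral being finite once more by \eqref{eq:h(t) estimate}. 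This gives the uniform bound $\PP[R_x \ge c \mid S_1 = s_1] \ll_\alpha e^{-\alpha c}$. Integrating over $s_1 \in (0,\log x)$ against the density $1/\log x$, handling the sub-interval $[\log x - r_0,\log x]$ trivially, and applying the substitution $v = \log x - s_1$ to the remainder reduces the main contribution to $\int_{r_0}^{\log x} e^{-\alpha v}\dee v/\log x \ll_\alpha 1/\log x$, as required. The main obstacle is setting up the Palm coupling correctly so that the almost-sure domination $R_x \le \tilde R$ holds and the tail bound on $\tilde R$ is uniform in $s_1$; once this is in place, the rest of the estimate is routine bookkeeping.
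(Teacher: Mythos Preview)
Your proof is correct and takes a genuinely different route from the paper's.

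The paper works with the two-dimensional process $\RS$ throughout: it first peels off the events $\{R_x>\log_2 x\}$ and $\{|S_k-\log x|\le 4r_0\}$, then localizes $W_k$ to $(0,(\log_2 x)^{-10}]$, dyadically decomposes $R_x$, replaces $\one_{R_x>2^{m-1}r_0}$ by $R_x^2$ and expands this as a double sum $\sum_{i>j>k}r(Y_i)r(Y_j)$, and finally applies the Mecke equation to three points of $\RS$ simultaneously, ending with a six-fold integral estimate. This is considerably more intricate.

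Your argument instead stays entirely on the one-dimensional process $\{S_i\}$. For $A_+$ you observe that $R_x$ is measurable with respect to the restriction of the Poisson process to $(0,\log x]$ while $S_0$ depends only on the restriction to $(\log x,\infty)$; the independence of Poisson restrictions to disjoint intervals then makes $\PP[A_+]$ an elementary expectation. For $A_-$ your Palm step is exactly what one gets by applying Mecke's equation once (to the single point $S_1$) on the one-dimensional process, and the key idea---extending the conditional sub-process on $(0,s_1)$ to a full Poisson process $\mathcal P^*$ on $(0,\infty)$ so that $R_x\le\tilde R$ with $\tilde R\stackrel{d}{=}\Theta_\infty$ independently of $s_1$---gives the uniform exponential tail with no further work. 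The coupling is clean because the gaps $Y_2,Y_3,\dots$ appear verbatim among the gaps of $\mathcal P^*$ (the extension only adds gaps above $S_2$).

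What your approach buys is brevity and transparency: one union bound, one independence observation, one Palm/coupling step, and Campbell's theorem (already invoked for \eqref{eq:mgf theta infty}). What the paper's approach buys is that it never leaves the framework of the ambient process $\RS$ and Mecke's equation, which is the toolkit already in play for the neighbouring Lemma~\ref{lem:prob e2c e3c}; so while heavier, it is methodologically uniform with the rest of Section~\ref{sec:different J and J*}.
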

	
	\begin{proof} We may assume that $x$ is large enough. 
		We have $\EE[e^{R_x}]\le \EE[e^{\Theta_\infty}] \ll 1$ by \eqref{eq:mgf theta infty}. Therefore, the event $R_x > \log_2 x$ occurs with probability $\ll\frac{1}{\log x}$. Using Lemma \ref{lem:scale-invariant-spacing-lemma}, we find that the probability that there exists $i\in\ZZ$ such that $|S_i-\log x|\le 4r_0$ is $O(1/\log x)$. 	In conclusion,
		\[
						\PP[\EC_1^c] \le \sum_{k\in\{0,1\}} \PP\bgg[4r_0<|S_k-\log x|\le R_x+2r_0,\ R_x\le \log_2x \bgg] +O\bggg(\frac{1}{\log x}\bggg)  .
		\]
		Note that if there exists $k \in \ZZ$ with $W_k>(\log_2x)^{-10}$ and $|S_k-\log x|\le \log_2x + 2r_0$, then $I_{(\log_2x)^{-10}}\ge \log x-\log_2x-2r_0$. So, using Proposition \ref{prop:icexponential}, we find that
		\begin{align*}
		\PP\bgg[\exists k: |S_k-\log x|\le \log_2x+2r_0,\ W_k>(\log_2x)^{-10} \bgg] 
			&\le  \PP\bg[I_{(\log_2 x)^{-10}} > \log x-\log_2x-2r_0\bg] \\
			&= \exp\!\bggg(\!-\frac{\log x-\log_2x-2r_0}{(\log_2x)^{10}}\bggg)
		\end{align*}
		for $x$ large enough. Consequently, 
		\[
		\PP[\EC_1^c] 
			\le \sum_{k\in\{0,1\}} \PP\bgg[4r_0<|S_k-\log x|\le R_x+2r_0,\ R_x\le \log_2x,\ W_k\le(\log_2x)^{-10} \bgg] +O\bggg(\frac{1}{\log x}\bggg) .
		\]
		Let $m_0$ be the largest integer such that $2^{m_0-1}r_0\le \log_2x$. Hence, if $2r_0<R_x\le\log_2x$, then there exists a unique integer $m\in[2,m_0]$ such that $2^{m-1}r_0<R_x\le 2^mr_0$, in which case the condition $|S_k-\log x|\le R_x+2r_0$ implies that $|S_k-\log x|\le 2^mr_0 +2r_0\le 2^{m+1}r_0$. We conclude that
			\[
					\PP[\EC_1^c]					
							\le  \mathop{\sum\sum}_{\substack{k\in\{0,1\} \\ 2\le m\le m_0} }
								\PP\bgg[R> 2^{m-1}r_0,\ |S_k-\log x|\le 2^{m+1}r_0,\ W_k \le (\log_2 x)^{-10} \bgg]	
						 +O\bggg(\frac{1}{\log x}\bggg) .
				\]
		Note that $\one_{R_x > 2^{m-1}r_0}\le \frac{4^{1-m}}{r_0^2} \one_{R_x>2r_0}R_x^2$. In addition, if $R_x>2r_0$, then $\sum_{i\ge2}r(Y_i)^2\le r_0R_x<R_x^2/2$, whence $R_x^2\le 4\sum_{i>j\ge2}r(Y_i)r(Y_j)$.
		We conclude that
		\[
									\PP[\EC_1^c]	\le   \mathop{\sum\sum}_{\substack{k\in\{0,1\} \\ 2\le m\le m_0} } 
						 	\frac{4^{2-m}}{r_0^2} \EE\bggg[ \mathop{\sum\sum}_{i,j :\ i>j>k}r(Y_i)r(Y_j) \cdot \one_{|S_k-\log x|\le 2^{m+1}r_0}  \cdot \one_{W_k \le (\log_2x)^{-10}} \bggg]	
						 +O\bggg(\frac{1}{\log x}\bggg) ,
		\]
		Therefore, to complete the proof of the lemma, it is enough to show that
		\begin{equation}
			\label{eq:E(z)}
		E(z) \coloneqq	\EE \bggg[\mathop{\sum\sum\sum}_{\substack{i, j, k \in \ZZ:\ i>j>k}}r(Y_i)r(Y_j) \cdot \one_{|S_k-\log x|\le z} \cdot \one_{W_k \le (\log_2x)^{-10}}  \bgg] \ll \frac{z}{\log x}
		\end{equation}
		uniformly for $z\in[0,4\log_2x]$. 
		
		For the rest of the proof, we fix $z\in[0,4\log_2x]$. 
	Given $t''>t'>t > 0$, let 
	\[
	I_{t,t',t''}\coloneqq \sum_{(W,Y)\in\RS,\ W\in\RR_{>t}\setminus\{t',t''\}} Y. 
	\]
	Since the $W_i$'s are almost surely distinct, we have
	\[
	E(z) \le \EE\bggg[ \mathop{\sum\sum\sum}_{\substack{(W,Y),(W',Y'),(W'',Y'')\in\RS \\ W''>W'>W }} 
		r(Y')r(Y'') \cdot  \one_{|Y+Y'+Y''+I_{W,W',W''}-\log x|\le z} \cdot \one_{W \le (\log_2x)^{-10}}  \bggg]  . 
	\]
	Hence, using the Mecke equation (Proposition \ref{prel:mecke}) and the fact that $I_{t, t', t''}$ has the same distribution as $I_{t}$, we find that
	\[
	E(z) \le \mathop{\int\cdots\int}\limits_{\substack{0<w<w'<w'' \\ w\le (\log_2x)^{-10} \\ y,y',y''\ge0 \\  y+y'+y''\le \log x+z }} r(y')r(y'') 
		\PP\bgg[ \bg|I_w+y+y'+y''-\log x\bg|\le z\bgg] 
		\frac{\dee w\cdots \dee y''}{  e^{wy+w'y'+w''y''}} ,
	\]
	where the integral is sixfold with variables $w,w',w'',y,y',y''$. Proposition \ref{prop:icexponential} implies that
 	\[
 	\PP\bgg[ \bg|I_w+y+y'+y''-\log x\bg|\le z\bgg]  \le e^{-w(\log x-y-y'-y'')} (e^{wz}-e^{-wz}) \ll wz e^{-w(\log x-y-y'-y'')},
 	\]
	since $w\le(\log_2x)^{-10}$ and $z\le 4\log_2x$. Consequently,
		\[
		\begin{split}
	E(z) &\ll 
	\mathop{\int\cdots\int}\limits_{\substack{0<w<w'<w'' \\ y,y',y''\ge0 \\  y+y'+y''\le \log x+z }} r(y')r(y'') 
		 zwe^{-w\log x-(w'-w)(y'+y'')-(w''-w')y''}\dee w\cdots \dee y'' \\
		&= \frac{z}{(\log x)^2} \iiiint\limits_{\substack{t,y,y',y''\ge 0 \\  y+y'+y''\le \log x+z }} \frac{r(y')r(y'')}{y''(y'+y'')}
		t e^{-t} \dee t\dee y\dee y'\dee y'' ,
		\end{split}
	\]
	where we made the change of variables $t=w\log x$. Since $\int_0^\infty r(u)/u \dee u\ll 1$ and $\log x+z\ll\log x$, relation \eqref{eq:E(z)} follows. This completes the proof of the lemma.
	\end{proof}
	
	\begin{lem}
		\label{lem:prob e2c e3c}
		We have $\PP[\EC_1 \cap \EC_2^c] + \PP[\EC_1 \cap \EC_3^c] \ll 1/\log x$ for $x\ge2$.
	\end{lem}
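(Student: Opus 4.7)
I treat $\PP[\EC_1 \cap \EC_2^c]$ in detail; the bound for $\PP[\EC_1 \cap \EC_3^c]$ follows by the same strategy applied to a pair $(W_i, Y_i), (W_0, Y_0) \in \RS$ with $W_i < W_0$ (the residual tail sum above $W_0$ is then independent of $Y_i$, giving a slightly simpler integrand). On $\EC_1 \cap \EC_2^c$, there exist two points of $\RS$, namely the zeroth point $(W_0, Y_0)$ and a point $(W_i, Y_i)$ with $W_0 < W_i \le W_0(1 + \eta/\min(Y_0, Y_i)^2)$, and the zeroth condition entails $S_1 \in [\log x - Y_0, \log x]$. Because $h(y) = 0$ for $y \le e^{-\gamma}$, so such points of $\RS$ contribute nothing to $\RS^*$ and play no role in the proof of Lemma \ref{lem:decomposition J neq J*}, I restrict attention to the regime $Y_0, Y_i > e^{-\gamma}$.

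The next step is to apply the Mecke equation (Proposition \ref{prel:mecke}) to this pair of points. The residual Poisson process $\RS \setminus \{(W_0, Y_0), (W_i, Y_i)\}$ retains the original intensity, and by Proposition \ref{prop:icexponential} its tail sum $I'_{w_0}$ above $w_0$ is exponentially distributed with rate $w_0$. Since $S_1 = y + I'_{w_0}$ (with $y = Y_i$), the conditional probability of the zeroth condition is
\[
e^{-w_0(\log x - y - y_0)^+}\bigl(1 - e^{-w_0 y_0}\bigr)
\]
for $y \le \log x$. Multiplying by the two-point intensity $e^{-w_0 y_0 - wy}$ and simplifying (in the main regime $y + y_0 \le \log x$), the Mecke integrand becomes $e^{-w_0 \log x - y(w - w_0)}(1 - e^{-w_0 y_0})$. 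Integrating $w$ over $(w_0, w_0(1 + c))$ with $c = \eta/\min(y_0, y)^2$ gives a factor $(1 - e^{-w_0 c y})/y$; substituting $t = w_0 \log x$, the $w_0$-integral reduces to $\int_0^\infty t e^{-t}(1 - e^{-\alpha t})(1 - e^{-\beta t})\,dt$ with $\alpha = cy/\log x$ and $\beta = y_0/\log x$, an expression equal to $1 - 1/(1+\alpha) - 1/(1+\beta) + 1/(1 + \alpha + \beta)$ and bounded by $\ll \min(\alpha, 1)\min(\beta, 1)$.

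The dominant contribution to the remaining two-dimensional integral over $(y, y_0)$ comes from the regime $e^{-\gamma} \le y \le y_0 \le \log x$ with both $\alpha, \beta \le 1$: there the integrand simplifies to $\ll \eta y_0/(y^2(\log x)^3)$, and integrating $y$ over $(e^{-\gamma}, y_0)$ (which converges at the lower endpoint thanks to the $e^{-\gamma}$-cutoff) followed by $y_0$ over $(e^{-\gamma}, \log x)$ yields $O(1/\log x)$, matching the target. All other sub-cases ($y > y_0$, or $y_0 > \log x$, or one of $\alpha, \beta > 1$) give smaller ranges or use the sharper probability bound $w_0(\log x - y)$ when $y + y_0 > \log x$, and each contributes at most $O((\log_2 x)/(\log x)^2)$. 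The main obstacle is organizing these case-splits cleanly so that no integral diverges; the decisive feature is that the cutoff $y > e^{-\gamma}$ (coming from $h(y) = 0$ below $\lambda_0$) tames the $y^{-2}$ integrand at the origin, while the constraint $y \le \log x$ needed for $\PP > 0$ handles the other end.
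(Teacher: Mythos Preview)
Your approach is essentially the paper's: relax $\EC_1$ to the indexing condition $I_{W_0}\in(\log x-Y_0,\log x]$ together with $Y_0,Y_i>e^{-\gamma}$, apply Mecke's equation to the pair of points, use Proposition~\ref{prop:icexponential} for the residual tail, and integrate out. Two minor slips do not affect the conclusion: the $w_0$-integral after substituting $t=w_0\log x$ has no factor of $t$ in the integrand (your evaluated expression $1-(1+\alpha)^{-1}-(1+\beta)^{-1}+(1+\alpha+\beta)^{-1}$ is in fact the value of $\int_0^\infty e^{-t}(1-e^{-\alpha t})(1-e^{-\beta t})\,dt$), and the sub-case $y_0>\log x$ actually contributes $O(1/\log x)$ like the main term rather than $O((\log_2 x)/(\log x)^2)$.
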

	
	\begin{proof} Recall that $\EC_2$ failing means that there exists $i\ge1$ such that $W_i/W_0\le 1+\eta/\min\{Y_0, Y_i\}^2$. In addition, recall that the event $\EC_1$ implies that $Y_0 > e^{-\gamma}$ (this was explained in the beginning of the proof of  Lemma \ref{lem:decomposition J neq J*}). Hence, we have that
		\[
			\PP[\EC_1 \cap \EC_2^c] \le 
				\EE\bggg[ \mathop{\sum\sum}_{(W,Y),(W',Y')\in\RS} \one_{1<W'/W\le 1+\eta/\min\{Y,Y'\}^2}\cdot\one_{Y',Y>e^{-\gamma}} \cdot \one_{I_{W, W'} + Y'\in(\log x-Y,\log x]}  \bggg] 
		\]
		with $I_{t, t'} \coloneqq \sum_{(W, Y) \in \RS,\ W \in \RR_{>t}\setminus\{t'\}} Y$. We use Mecke's equation as in the proof of \eqref{eq:E(z)} to get that
		\[
		\PP[\EC_1 \cap \EC_2^c] \le 
		 \iiiint\limits_{\substack{0 < w < w' < w(1+\eta/\min\{y,y'\}^2)\\ y, y' > e^{-\gamma},\ y'\le \log x}} \PP\bg[\log x - y<I_w + y' \le \log x\bg]\cdot e^{-wy-w'y'} 
		 \dee w\dee w'\dee y\dee y' .
		\]
		We have $e^{-w'y'}\le e^{-wy'}$, thus
		\[
		\PP[\EC_1 \cap \EC_2^c] \ll
		\iiint\limits_{\substack{w>0,\ y, y' > e^{-\gamma} \\ y'\le \log x}} 
			\PP\bg[\log x - y<I_w + y' \le \log x\bg]\cdot \frac{we^{-w(y+y')}}{\min\{y,y'\}^2} \dee w\dee y\dee y'.
		\]
By Proposition \ref{prop:icexponential}, we have 
		\[
			\PP\bg[\log x - y<I_w + y' \le \log x\bg] \le \begin{cases}
			e^{-w(\log x - y-y')}&\text{if $y \le \log x$,} \\
				1 & \text{if $y > \log x$.}
			\end{cases}
		\]
	Therefore,
		\begin{align*}
				\PP[\EC_1 \cap \EC_2^c] 
			&\ll \iiint\limits_{\substack{w>0,\ y, y' > e^{-\gamma} \\ y,y'\le \log x}} 
			 \frac{we^{-w\log x}}{\min\{y,y'\}^2}\dee w\dee y\dee y' 
			 +\iiint\limits_{\substack{w>0,\ y, y' > e^{-\gamma} \\ y'\le \log x<y}} 
			\frac{we^{-w(y+y')}}{(y')^2}\dee w\dee y\dee y' \\
			&\le \iint\limits_{e^{-\gamma}<y, y'\le \log x}
			\frac{1}{\min\{y,y'\}^2(\log x)^2}\dee y\dee y' 
			+\iint\limits_{\substack{y, y' > e^{-\gamma} \\ y'\le \log x<y}} 
			\frac{1}{(yy')^2} \dee y\dee y' \\
			&\ll 1/\log x.
		\end{align*}
		This completes the proof of the claimed bound on $\PP[\EC_1\cap \EC_2^c]$.
		
		Finally, we bound $\PP[\EC_1 \cap \EC_3^c]$ using a very similar argument. We have
		\[
		\begin{split}
		\PP[\EC_1 \cap \EC_3^c] &\le 
		\EE\bggg[ \mathop{\sum\sum}_{(W,Y),(W',Y') \in\RS} \one_{1<W/W'\le 1+\eta/\min\{Y,Y'\}^2}\cdot\one_{Y',Y>e^{-\gamma}} \cdot \one_{I_W\in(\log x-Y,\log x]}  \bggg] \\
		&= 
		\iiiint\limits_{\substack{0 < w'<w < w'(1+\eta/\min\{y,y'\}^2)\\ y, y' > e^{-\gamma}}}  \PP\bg[\log x - y<I_w \le \log x\bg]\cdot e^{-wy-w'y'} 
		\dee w\dee w'\dee y\dee y' \\
		&\le J_1+J_2,
		\end{split}
		\]
		where
		\[
		J_1 \coloneqq \iiiint\limits_{\substack{0 < w'<w < w'(1+\eta/\min\{y,y'\}^2)\\ y, y' > e^{-\gamma},\ y\le \log x}} e^{-w\log x-w'y'}
		\dee w\dee w'\dee y\dee y' 
		\]
		and
		\[
		J_2 \coloneqq \iiiint\limits_{\substack{0 < w'<w < w'(1+\eta/\min\{y,y'\}^2)\\ y>\log x,\ y' > e^{-\gamma}}}  e^{-wy-w'y'} 
		\dee w\dee w'\dee y\dee y' .
		\]
		Using $e^{-w\log x}\le e^{-w'\log x}$, we find that
		\begin{align*}
			J_1&\ll \iiint\limits_{\substack{w'>0,\ y, y' > e^{-\gamma}\\ y\le \log x}} \frac{w'e^{-w'(y'+\log x)}}{\min\{y,y'\}^2}  \dee w'\dee y\dee y' \\
			&= \iint\limits_{\substack{y, y' > e^{-\gamma} \\ y\le \log x}} \frac{1}{\min\{y,y'\}^2(y'+\log x)^2} \dee y\dee y'\\
			& \ll1/\log x. 
		\end{align*}
		Similarly, we have	
		\begin{align*}
			J_2&\ll \iiint\limits_{\substack{w'>0,\ y' > e^{-\gamma}\\ y>\log x}} \frac{w'e^{-w'(y+y')} }{\min\{y,y'\}^2}  \dee w'\dee y\dee y' \\
			&= \iint\limits_{\substack{y' > e^{-\gamma} \\ y>\log x}} \frac{1}{\min\{y,y'\}^2(y+y')^2} \dee y\dee y'\\
			& \ll1/\log x. 
		\end{align*}
		This implies that $\PP[\EC_1\cap\EC_3^c]\ll 1/\log x$, thus completing the proof of the lemma. 
	\end{proof}

	As an immediate corollary of Lemmas \ref{lem:decomposition J neq J*}, \ref{lem:prob e1c} and \ref{lem:prob e2c e3c}, we have:
	
	\begin{prop}
		\label{prop:prob J neq J*}
		For $x\ge2$, we have $\PP\bg[J_x \ne J_x^*\bg] \ll 1/\log x$.
	\end{prop}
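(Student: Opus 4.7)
The plan is essentially to assemble the three preceding results, since Proposition \ref{prop:prob J neq J*} is an immediate corollary of Lemmas \ref{lem:decomposition J neq J*}, \ref{lem:prob e1c}, and \ref{lem:prob e2c e3c}. Concretely, I would start from the decomposition provided by Lemma \ref{lem:decomposition J neq J*}, namely
\[
\PP[J_x \ne J_x^*] \le \PP[\EC_1^c] + \PP[\EC_1 \cap \EC_2^c] + \PP[\EC_1 \cap \EC_3^c],
\]
then apply Lemma \ref{lem:prob e1c} to bound the first term by $O(1/\log x)$, and Lemma \ref{lem:prob e2c e3c} to bound the sum of the remaining two terms by $O(1/\log x)$. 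Adding these three estimates yields the claim.

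Since the bulk of the work is already carried out in the lemmas, there is no genuine obstacle remaining at this stage. The real difficulty lay in the preparatory lemmas: in Lemma \ref{lem:decomposition J neq J*}, one had to identify a deterministic event (the intersection $\EC_1 \cap \EC_2 \cap \EC_3$) that forces the indexing of $\RS$ adapted to $\log x$ and the indexing of $\RS^*$ adapted to $\log x$ to pick out the \emph{same} points, using the definition of $\eta$ together with the estimate \eqref{eq:h(t) estimate} to convert the $W$-ordering into the $(WY/h(Y))$-ordering on the relevant indices. The hardest estimate was then Lemma \ref{lem:prob e1c}, where one needed to rule out the boundary contributions near $S_k = \log x$ with the sharp $1/\log x$ savings (as opposed to the weaker $\log_2 x / \log x$ bound sufficient for \cite{Arratia02}), using a delicate second-moment argument on $R_x$ together with Campbell's theorem and the Mecke equation.

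Given these inputs, the proof of Proposition \ref{prop:prob J neq J*} itself is a single line of bookkeeping, and I would present it as such.
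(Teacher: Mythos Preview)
Your proposal is correct and matches the paper's approach exactly: the paper also presents Proposition~\ref{prop:prob J neq J*} as an immediate corollary of Lemmas~\ref{lem:decomposition J neq J*}, \ref{lem:prob e1c}, and \ref{lem:prob e2c e3c}, with no additional argument.
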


	%%%% Total variation distance %%%%
	
	\section{Proof of Proposition \ref{prop:dtv of M and N}}
	\label{sec:dtv}
	
	With a good estimation of the distribution of $J_x^*$ and with the fact that $J_x = J_x^*$ with a quantifiably high probability, we are able to give an upper bound on the total variation distance between the distribution of $M_x$ and $N_x$.
	
	\begin{proof}[Proof of Proposition \ref{prop:dtv of M and N}]
		Recall that we constructed $N_x$ in the coupling such that $\PP[M_x \ne N_x] = \dtv(\mu_x, \nu_x)$ with $\mu_x$ and $\nu_x$ being the distribution of $M_x$ and $N_x$ respectively. Note that $M_x > x$ must imply that $J_x > x$, which means that $\EC_1$ cannot happen in that situation by \eqref{eq:e1 implies J < x}. Hence, using Lemma \ref{lem:prob e1c}, we see that it is enough to show that 
		\[
			\PP[M_x \in A] = \frac{\# A}{\floor x} + O\bgg(\frac{1}{\log x}\bgg)
		\]
		uniformly over all $A \subseteq \ZZ \cap [1, x]$.
		
		For each $j \le x$, we define the sets
		\[
			A_j \coloneqq \bigcup_{\substack{p:\ pj \in A}} \left(\frac{\theta(p-1)}{\theta(x/j)}, \frac{\theta(p)}{\theta(x/j)}\right].
		\]
		Note that $M_x \in A$ and $J_x \le x/2$ if, and only if, $J_x=j$ for some $j\le x/2$ and $U'_1 \in A_j$. With this in mind, we define the following events:
		\begin{itemize}
			\item $\mathcal B_1 \coloneqq \{J_x=J_x^* \le x/2\}$;
			\item $\mathcal B_2 \coloneqq \{ M_x \in A\}$;
			\item $\mathcal B_3 \coloneqq \{\mbox{$J_x^* = j$ and $U'_1 \in A_j$ for some $j\le x/2$}\}$.
		\end{itemize} 
		Since $\mathcal B_1 \cap \mathcal B_2 = \mathcal B_1 \cap \mathcal B_3$, we have
		\[
			\bgg|\PP[\BC_2] - \PP[\BC_3]\bgg| \le \PP[\BC_1^c] \le \PP\bg[J_x \ne J_x^*\bg] + \PP\bg[x/2< J_x^* \le x\bg] \ll \frac{1}{\log x}
		\]
		by Lemma \ref{lem:probaj} and by Proposition \ref{prop:prob J neq J*}. Therefore, we have 
		\begin{align}
			\PP\bg[M_x \in A\bg] 
				&= \sum_{j \le x/2} \PP\bg[J_x^* = j,\ U'_1 \in A_j\bg] + \bigoo{\frac{1}{\log x}} \nonumber  \\
				&= \mathop{\sum\sum}_{p,j:\ pj \in A} \frac{\log p}{\theta(x/j)}\cdot \PP[J_x^* = j] + \bigoo{\frac{1}{\log x}} .
			\label{eq:Mx reduction 1}				
		\end{align}
Since $\theta(t)/t = 1 + O\bg((\log t)^{-2}\bg)$ for all $t \ge 2$ by the Prime Number Theorem \cite[Theorem 8.1]{Koukoulo19}, we have
		\begin{equation}
			\label{eq:Mx reduction 2}
			\frac{\PP[J_x^* = j]}{\theta(x/j)} = \frac{1}{\floor x \log x}\left(1+\bigoo{\frac{1}{(\log(x/j))^2}+\frac{1}{\log x}}\right)
		\end{equation}
		for $j\le x/2$. In addition, note that
		\begin{equation}
			\label{eq:Mx reduction 3}
			\mathop{\sum\sum}_{p,j:\ pj \in A}  \log p \cdot \left(\frac{1}{(\log(x/j))^2} + \frac{1}{\log x}\right) \ll \sum_{j \le x/2} \left(\frac{x}{j(\log(x/j))^2} + \frac{x}{j\log x}\right) \ll x.
		\end{equation}
		By combining \eqref{eq:Mx reduction 1}, \eqref{eq:Mx reduction 2} and \eqref{eq:Mx reduction 3}, we reduce the problem to showing the following:
		\begin{equation}
			\label{eq:Mx reduction 4}
			 \mathop{\sum\sum}_{p,j:\ pj \in A}  \frac{\log p}{\log x} = \# A + O\bgg(\frac{x}{\log x}\bgg).
		\end{equation}
		If we set $L(a)\coloneqq \sum_{p|a}\log p$, then we have
		\begin{equation}
			\label{eq:Mx reduction 5}
		 \mathop{\sum\sum}_{p,j:\ pj \in A}  \frac{\log p}{\log x} = \sum_{a\in A}\frac{L(a)}{\log x} = \# A 
		 	- \sum_{a\in A} \frac{\log(x/a) + (\log a - L(a))}{\log x}
		\end{equation}
		The quantity $\log(x/a)$ is non-negative whenever $a \le x$. Furthermore, the integer $a/(\prod_{p|a} p)$ is always a divisor of $s(a)$, thus $0 \le \log a - L(a) \le \log s(a)$. Therefore, 
		\begin{align*}
			\sum_{a\in A} \frac{\log(x/a) + (\log a - L(a))}{\log x} &\ll \sum_{a\le x} \frac{\log(x/a) + \log s(a)}{\log x} \\
			&= \frac{\floor x}{\log x} \cdot \EE\bg[\log(x/N_x) + \log s(N_x)\bg] \\
			&\ll \frac{x}{\log x}
		\end{align*}
		by Lemma \ref{lem:properties of N}. With this inequality combined with \eqref{eq:Mx reduction 5}, we establish \eqref{eq:Mx reduction 4}, and hence it completes the proof of Proposition \ref{prop:dtv of M and N}.
	\end{proof}

	\newpage
	
	\clearpage
	\thispagestyle{fancy}
	\fancyhf{} % sets both header and footer to nothing
	\renewcommand{\headrulewidth}{0cm}
	\lhead[{\scriptsize \thepage}]{}
	\rhead[]{{\scriptsize\thepage}}
	\part{Factorization into $k$ parts}\label{part:DDT}

	\section{Theorem \ref{thm:fact into k parts} for the probabilistic model}\label{sec:Donnelly-Tavare}
	
	With the following probabilistic version of Theorem \ref{thm:fact into k parts} given by Donnelly and Tavaré in 1987 \cite[Section 3]{DonnellyTavare87}, we see how a Poisson--Dirichlet process relates to the Dirichlet distribution. We repeat their proof since they use a Poisson process that will come in handy later.

	\begin{prop}[Donnelly--Tavar\'e \cite{DonnellyTavare87}]
		\label{prop:probabilistic dirichlet law}
		Let $\mathbf V = (V_1, V_2, \ldots)$ be a Poisson--Dirichlet process, let $\boldalpha = (\alpha_1, \ldots, \alpha_k) \in \simplex$ such that $\alpha_i > 0$ for all $1 \le i \le k$, and let $(C_i)_{i \ge 1}$ be a sequence of i.i.d.~random variables (also independent of $\mathbf V$) with $\PP[C_i = j] = \alpha_j$ for all $i \ge 1$. Then the random vector 
		\[
		\left(\sum_{i:\ C_i = 1} V_i, \ldots, \sum_{i:\ C_i = k} V_i\right)
		\]
		is distributed according to $\Dir(\boldalpha)$.
	\end{prop}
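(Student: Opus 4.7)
The plan is to realize the Poisson--Dirichlet process via a Poisson point process on $(0,\infty)$, apply the coloring theorem, and then reduce the claim to the classical Dirichlet--Gamma identity. Concretely, let $\mathcal N$ be a Poisson process on $(0,\infty)$ with intensity measure $s^{-1}e^{-s}\dee s$, and enumerate its atoms in decreasing order as $J_1>J_2>\cdots$. A direct computation with the Lévy--Khintchine formula gives
\[
\EE\bgg[\exp\!\bg(\!-\lambda\sum_{i\ge1} J_i\bg)\bgg]=\exp\!\bggg(\!-\riemint{0}{\infty}{(1-e^{-\lambda s})\frac{e^{-s}}{s}}{s}\bggg)=\frac{1}{1+\lambda},
\]
so $T\coloneqq\sum_{i\ge1}J_i$ has the $\mathrm{Gamma}(1)$ distribution. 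It is a classical fact (originally due to Kingman) that $(J_i/T)_{i\ge1}$ is Poisson--Dirichlet of parameter $1$ and independent of $T$; hence we may, without loss of generality, realize $\mathbf V$ as $V_i=J_i/T$.

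Next, I would color each atom $J_i$ independently with a color $C_i'\in\{1,\dots,k\}$ satisfying $\PP[C_i'=j]=\alpha_j$, the coloring being independent of $\mathcal N$. By the coloring theorem for Poisson processes, the sub-processes $\mathcal N_j\coloneqq\{J_i\,:\,C_i'=j\}$ for $j=1,\dots,k$ are \emph{mutually independent} Poisson processes on $(0,\infty)$ with intensities $\alpha_j\,s^{-1}e^{-s}\dee s$. Setting $T_j\coloneqq\sum_{i:\,C_i'=j}J_i$, a second application of the Lévy--Khintchine formula shows that $T_j\sim\mathrm{Gamma}(\alpha_j)$ and that the random variables $T_1,\dots,T_k$ are independent. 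The classical Dirichlet--Gamma identity then yields
\[
\bggg(\frac{T_1}{T_1+\cdots+T_k},\ldots,\frac{T_k}{T_1+\cdots+T_k}\bggg)\sim\Dir(\boldalpha),
\]
and this vector is independent of $T=T_1+\cdots+T_k$.

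To conclude, note that $T_j/T=\sum_{i:\,C_i'=j}V_i$ by construction. Since the coloring $(C_i')_{i\ge1}$ is i.i.d.\ and independent of $\mathcal N$, its joint distribution with $\mathcal N$ is invariant under any measurable relabeling of the atoms of $\mathcal N$; in particular it is invariant under the relabeling that sorts them in decreasing order. Hence $(C_i')_{i\ge1}$ thus relabeled has the same joint distribution as the sequence $(C_i)_{i\ge1}$ in the statement, given the independence of $(C_i)_{i\ge1}$ and $\mathbf V$. Combining this with the displayed identity above proves the proposition.

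The only step that requires any real care is the last bookkeeping step: one must verify that sorting the atoms of $\mathcal N$ in decreasing order and transporting the coloring accordingly gives an i.i.d.\ $(\alpha_1,\dots,\alpha_k)$-multinomial sequence independent of the sorted sequence $(V_i)_{i\ge1}$. This follows from the fact that, conditional on the unordered set of atoms of $\mathcal N$, the coloring is an i.i.d.\ multinomial sequence indexed by the atoms, and the map to the sorted sequence is a measurable bijection almost surely.
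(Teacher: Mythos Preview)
Your proof is correct and follows essentially the same route as the paper: realize $\mathbf V$ as the normalized atoms of a Poisson process on $(0,\infty)$ with intensity $s^{-1}e^{-s}\dee s$, apply the Colouring Theorem to obtain independent thinned processes, identify the partial sums as independent $\mathrm{Gamma}(\alpha_j)$ variables via their Laplace transforms (the paper uses Campbell's Theorem for this), and conclude by the Dirichlet--Gamma identity. The only difference is cosmetic: you spell out the relabeling argument ensuring the colors on the \emph{sorted} atoms remain i.i.d.\ and independent of $\mathbf V$, whereas the paper leaves this implicit by defining the $V_i$ from the already-sorted sequence $X_1>X_2>\cdots$ at the outset.
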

	
	\begin{proof}
		Let $X_1 > X_2 > \cdots$ be the points of a Poisson process on $\RR_{>0}$ with intensity $\frac{e^{-x}}{x} \, \mathrm dx$, and let $S \coloneqq \sum_{i \ge 1} X_i$. For each $i \ge 1$, Let $V_i \coloneqq \frac{X_i}{S}$. We then know that $(V_1, V_2, \ldots)$ follows the Poisson--Dirichlet distribution \cite[Theorem 2.2]{Feng10}. With the Colouring Theorem (Proposition \ref{prel:colour}), the point processes
		\[
		\Pi_m = \{X_i : C_i = m\}
		\]
		form independent Poisson processes of intensity $\alpha_m\cdot \frac{e^{-x}}{x} \, \mathrm dx$ for all $m = 1, \ldots, k$. We then let
		\[
		S_m \coloneqq \sum_{X \in \Pi_m} X
		\]
		for $m=1,2,\dots,k$. 	With Campbell's Theorem (Proposition \ref{prel:campbell}), we compute the moment generating function 
		\[
		\log \EE[e^{sS_m}] = \alpha_m\cdot \riemint{0}{\infty}{\frac{e^{sx}-1}{x}\cdot e^{-x}}{x} = -\alpha_m\log(1-s)
		\] 
		for $\Re(s) < 1$. This coincides with the moment generating function of $\text{Gamma}(\alpha_m, 1)$ distribution. We thus deduce that the vector
		\[
		\left(\sum_{i:\ C_i = 1} V_i, \ldots, \sum_{i:\ C_i = k} V_i\right) = \left(\frac{S_1}{\sum_{m=1}^k S_m}, \ldots, \frac{S_k}{\sum_{m=1}^k S_m}\right)
		\] 
		follows the distribution $\text{Dir}(\boldalpha)$ (see \cite[Chapter 49, pp. 485-487]{KotzBal00} for a proof). 
	\end{proof}

	\section{A coupling for $\mathbf{D}_{f, x}$}\label{sec:coupling for factorizations}
	
	Fix $k\ge2$, $\boldalpha \in \simplex$ with $\alpha_i>0$ for all $i$, and $f\in\mathcal{F}_k(\boldalpha)$. In addition, let $N_x$ be a random integer uniformly distributed in $\ZZ \cap [1, x]$, let $\mathbf D_{f, x}$ be a random vector satisfying \eqref{eq:prob mass function f}, and let $\mathbf Z$ be a $\simplex$-valued random variable distributed according to $\text{Dir}(\boldalpha)$. We also consider the random vector $\bolddelta_{f, x} \in \Delta^{k-1}$ defined by
	\begin{equation}
		\label{eq:defn delta}
		\bolddelta_{f, x} \coloneqq  \bggg(\frac{\log D_{f,x,1}}{\log N_x},\dots,\frac{\log D_{f,x,k}}{\log N_x}\bggg)
	\end{equation}
	when $N_x \ge 2$, and $\bolddelta_{f,x} = (1,0,\ldots,0)$ when $N_x = 1$. We note that Theorem \ref{thm:fact into k parts} is about comparing the cumulative distribution function of $\bolddelta_{f, x}$ with the one for $\mathbf Z$. To achieve this, we construct an appropriate coupling between $N_x$, $\mathbf D_{f, x}$ and $\mathbf Z$. This will build on the coupling given in Section \ref{sec:coupling}.
	
	\begin{lem}
		\label{lem:coupling for factorizations}
			Let $x\ge2$ and assume the above notation. There exists a coupling of $N_x$, $\mathbf V$, $\mathbf D_{f, x}$ and $\mathbf Z$ such that $\PP[\mathcal E^c] \ll \frac{1}{\log x}$ with
			\[
				\mathcal E \coloneqq \bgggg\{\maxnorm{\bolddelta_{f, x} - \mathbf Z} \le \frac{2 \cdot \log(x/N_x) + 3 \cdot \log s(N_x) + 2\cdot \Theta_x}{\log x}\bgggg\},
			\]
			with $\bolddelta_{f, x}$ defined as in \eqref{eq:defn delta}, $\Theta_x$ defined as in \eqref{eq:def of Theta_x}, and $\maxnorm \cdot$ being the supremum norm in $\RR^k$.
	\end{lem}
	
	\begin{proof} Our starting point is the coupling of $N_x$ and $\mathbf V$ described in Section \ref{sec:coupling}, and it remains to construct $\mathbf D_{f, x}$ and $\mathbf Z$. To do this, we need to equip this space with the following additional random variables that are all independent of each other and of $N_x$ and $\mathbf V$, and whose role will become apparent later:
		\begin{itemize}
			\item a sequence $(C_i')_{i\ge1}$ of i.i.d. random variables such that $\PP[C_i'=\ell]=\alpha_\ell$ for all $i\ge1$ and all $\ell=1,2,\dots,k$;
			\item for each natural number $n$ and each prime $p$, let $\mathbf X_{p}(n) \coloneqq \bg(X_{p, 1}(n), \ldots, X_{p, k}(n)\bg)$ be independent random vectors with
			\[
			\PP\bg[X_{p, i}(n) = e_i \ \forall i \le k\bg] = f(p^{e_1}, \ldots, p^{e_k}),
			\]
			for every non-negative integer solutions to $e_1+\cdots + e_k = \nu_p(n)$. 
		\end{itemize}
	To construct $\mathbf D_{f, x}$, we may take
	\[
	D_{f,x,i} \coloneqq \prod_{p} p^{X_{p, i}(N_x)}\quad\text{for}\ i=1,2,\dots,k.
	\]
	Since $f \in \mathcal F_k(\boldalpha)$ satisfies property (c) of Definition \ref{dfn:Fktheta}, we may easily check that the distribution of $\mathbf D_{f, x}=(D_{f,x,1},\dots,D_{f,x,k})$ is indeed in accordance with \eqref{eq:prob mass function f}.
	
	Next, we construct $\mathbf Z$. Firstly, we need to construct a new sequence of random variables $(C_i)_{i\ge1}$, whose definition depends deterministically on the previously introduced random variables. The following events must occur:
	\begin{itemize}
		\item We have $N_x = n$ for some choice of $n \in \ZZ \cap [1, x]$;
		\item For all $j \in \{1, \ldots, k\}$ and all $p|n^\flat$, we have $X_{p, j}(n) = \one_{j=\ell(p)}$ for some choice of sequence $(\ell(p))_{p | n^\flat}$ taking values in $\{1, \ldots, k\}$.
		\item For all $i \ge 1$, we have that $C'_i = \ell'_i$ for some choice sequence $(\ell'_i)_{i \ge 1}$ taking values in $\{1, \ldots, k\}$.
	\end{itemize}
	Recall the factorization $N_x=P_1P_2\cdots$ with the $P_i$'s forming a non-increasing sequence of primes or ones. Then we construct $(C_i)_{i \ge 1}$ in the following way:
	\begin{itemize}
		\item If $i\ge1$ is such that $P_i > 1$ and $P_i|n^\flat$,  then we let $C_i\coloneqq\ell(P_i)$.
		\item Otherwise, we let $C_i\coloneqq \ell_i'$. 
	\end{itemize}
	Since $f\in\mathcal F_k(\boldalpha)$ satisfies property (b) of Definition \ref{dfn:Fktheta}, a straightforward computation reveals that for any $n \in \ZZ \cap [1, x]$, any $(\ell_i)_{i \in I}$ taking values in $\{1, \ldots, k\}$ and any finite set $I \subset \NN$, we have 
	\[
		\PP\bg[C_i = \ell_i \ \ \forall i \in I\, |\,N_x = n\bg] = \prod_{j=1}^k \alpha_j^{\#\{i \in I : \ell_i = j\}},
	\]
	implying that the random variables $(C_i)_{i\ge1}$ are independent of each other, of $N_x$ and of $\mathbf V$, and they satisfy 
	\[
	\label{eq:C_i dfn}
	\PP[C_i=\ell]=\alpha_\ell\quad\text{for}\ i=1,2,\dots\ \text{and}\ \ell=1,2,\dots,k.
	\]
	Finally, equipped with these random variables and motivated by the proof of Donnelly and Tavar\'e in Section \ref{sec:Donnelly-Tavare}, we take
	\[
	\mathbf Z \coloneqq \left(\sum_{i:\, C_i=1} V_1, \ldots, \sum_{i:\, C_i=k} V_k  \right),
	\]
	which is distributed according to $\text{Dir}(\boldalpha)$ by the discussion in Section \ref{sec:Donnelly-Tavare}.
	
	Recall the definition of $\bolddelta_{f, x}$ in \eqref{eq:defn delta}. We introduce the auxiliary random variables
	\begin{equation}
		\label{eq:defn delta* and rho}
		\bolddelta_{f, x}^* \coloneqq \left(\frac{\log D_{f,x,1}}{\log x}, \ldots, \frac{\log D_{f,x,k}}{\log x}\right),
		\qquad \boldrho_{f, x} \coloneqq \left(\sum_{i:\, C_i=1} \frac{\log P_i}{\log x} , \ldots, \sum_{i:\, C_i=k} \frac{\log P_i}{\log x}  \right).
	\end{equation}
	We claim that
	\begin{equation}
		\label{eq:transition delta to delta*}
		\maxnorm{  \bolddelta_{f, x} -  \bolddelta_{f, x}^*} \le \frac{\log(x/N_x)}{\log x}.
	\end{equation}
	Indeed, if $N_x = 1$, both the left-hand and right-hand sides of the inequality are always equal to 1. For $N_x \ge 2$, we note that
	\[
		\maxnorm{  \bolddelta_{f, x} -  \bolddelta_{f, x}^*} = \frac{\log(x/N_x)}{\log x} \cdot  \max_{1 \le i \le k} \frac{\log D_{f,x,i}}{\log N_x},
	\]
	from which \eqref{eq:transition delta to delta*} follows.
	
	Furthermore, we also assert that
	\begin{equation}
		\label{eq:transition rho to delta*}
		\maxnorm{\bolddelta_{f, x}^* - \boldrho_{f, x}} \le \frac{\log s(N_x)}{\log x}.
	\end{equation}
	To verify this, suppose $N_x=n$. Then we have
	\begin{align*}
		\log D_{x,\ell} &= \sum_{p|n^\flat}  X_{p, \ell}(n) \log p +  \sum_{p|s(n)}  X_{p, \ell}(n) \log p \\
		&=\sum_{i\ge1:\ P_i|n^\flat} X_{P_i, \ell}(n) \log P_i +  \sum_{p|s(n)}  X_{p,\ell}(n) \log p \\
		&=\sum_{i\ge1:\ P_i|n^\flat}  \one_{C_i=\ell} \log P_i +  \sum_{p|s(n)}  X_{p,\ell}(n) \log p ,
	\end{align*}
	whence \eqref{eq:transition rho to delta*} follows readily. 
	
	Now if the event $\mathcal E$ does not happen, then
	\[
	\maxnorm{\bolddelta_{f, x} - \mathbf Z} > \frac{2 \cdot \log(x/N_x) + 3 \cdot \log s(N_x) + 2\cdot \Theta_x}{\log x}.
	\]
	Together with	 \eqref{eq:transition delta to delta*} and \eqref{eq:transition rho to delta*}, this implies that
	\[
		\maxnorm{\boldrho_{f, x} - \mathbf Z} > \frac{\log(x/N_x) + 2 \cdot \log s(N_x) + 2\cdot \Theta_x}{\log x}
	\]
	By Lemma \ref{lem:coupling ineq}, we must then have that $M_x \ne N_x$. In conclusion, we have proven that $\mathcal{E}^c\subseteq\{M_x\ne N_x\}$. Thus, Proposition \ref{prop:dtv of M and N} shows that $\PP[\mathcal E^c] \ll \frac{1}{\log x}$, as required.
	\end{proof}

	In order to make use of Lemma \ref{lem:coupling for factorizations}, we need to introduce some further notation. Let
	\[
	\Rnt \coloneqq 3\cdot \frac{\log(x/N_x^\flat)}{\log x} \quad \text{and} \quad \Rprob \coloneqq 2\cdot \frac{\Theta_x}{\log x}.
	\]
	We write $B(\mathbf x,r)$ for the closed ball of radius $r \ge 0$ centered at $\mathbf x$ in the normed vector space $(\RR^k, \maxnorm{\cdot})$. We recall that $\simplex$ is the standard $(k-1)$-dimensional simplex. For any subset $A \subseteq \simplex$, we write $\partial A$ for the boundary of $A$ in the relative topology of $\simplex$.
	
	In addition, we define the following events:
	\begin{itemize}
		\item $\ECnt(A) \coloneqq \bg\{ B(\bolddelta_{f, x}^*, \Rnt)\cap \partial A=\emptyset\bg\}$;
		
		\item $\ECprob(A) \coloneqq \bg\{  B(\mathbf Z, \Rprob) \cap \partial A = \emptyset \bg\}$.
	\end{itemize}

	\begin{lem}
		\label{lem:sym diff D and Z}
		For any measurable set $A\subseteq\simplex$, we have 
		\[
		\PP\bg[\bolddelta_{f, x} \in A\bg] = \PP\bg[\mathbf Z \in A\bg] + O\left(\PP\bg[\ECnt(A)^c\bg] + \PP\bg[\ECprob(A)^c\bg] + \frac{1}{\log x}\right).
		\]
	\end{lem}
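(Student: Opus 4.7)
The plan is to show that on the event $\EC \cap \ECnt(A) \cap \ECprob(A)$, the events $\{\bolddelta_x \in A\}$ and $\{\mathbf Z \in A\}$ coincide. Granted this, the symmetric-difference bound
\[
\bg|\PP[\bolddelta_x \in A]-\PP[\mathbf Z \in A]\bg| \le \PP\bg[\{\bolddelta_x \in A\}\triangle \{\mathbf Z \in A\}\bg] \le \PP[\EC^c]+\PP[\ECnt(A)^c]+\PP[\ECprob(A)^c],
\]
combined with $\PP[\EC^c]\ll 1/\log x$ from Lemma \ref{lem:coupling for factorizations}(a), yields the claim at once.

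I will argue the coincidence by contradiction: assume, without loss of generality, that $\bolddelta_x \in A$ but $\mathbf Z \notin A$. Both $\bolddelta_x$ and $\mathbf Z$ lie in $\simplex$, so by convexity the line segment $\gamma(t)\coloneqq(1-t)\bolddelta_x+t\mathbf Z$, $t\in[0,1]$, is contained in $\simplex$. A standard intermediate-value argument, taking $t^\star=\sup\{t\in[0,1] : \gamma(t)\in \bar A\}$ and exploiting that $\simplex$ is closed in $\RR^k$, shows that $c=\gamma(t^\star)$ lies in $\bar A\cap \overline{\simplex\setminus A}=\partial A$.

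Next, I will show that $c$ must belong to $B(\bolddelta_x^*, \Rnt)$ or to $B(\mathbf Z, \Rprob)$, contradicting $\ECnt(A)$ or $\ECprob(A)$ respectively. Since $c$ lies on the segment from $\bolddelta_x$ to $\mathbf Z$, the supremum norm is additive along it, so $\maxnorm{\bolddelta_x-c}+\maxnorm{c-\mathbf Z}=\maxnorm{\bolddelta_x-\mathbf Z}$. On $\EC$, part (b) of Lemma \ref{lem:coupling for factorizations} gives $\maxnorm{\bolddelta_x-\mathbf Z}\le (2\log(x/N_x)+3\log s(N_x)+2\Theta_x)/\log x$. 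Assume $\maxnorm{\bolddelta_x^*-c}>\Rnt$: by the triangle inequality and \eqref{eq:transition delta to delta*},
\[
\maxnorm{\bolddelta_x-c} \ge \maxnorm{\bolddelta_x^*-c}-\maxnorm{\bolddelta_x-\bolddelta_x^*} > \Rnt-\frac{\log(x/N_x)}{\log x}=\frac{2\log(x/N_x)+3\log s(N_x)}{\log x},
\]
and subtraction from the previous bound yields $\maxnorm{c-\mathbf Z}<2\Theta_x/\log x=\Rprob$. Thus $c\in B(\mathbf Z,\Rprob)\cap \partial A$, violating $\ECprob(A)$.

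The point requiring the most care is the topological one: because $\bolddelta_x^*$ does not in general lie in $\simplex$ (its coordinates sum to $\log N_x/\log x$, typically strictly less than $1$), the intermediate-value argument cannot be applied directly to $\bolddelta_x^*$ and must instead be run on the segment joining $\bolddelta_x$ and $\mathbf Z$, after which the triangle inequality transfers the geometric information to $\bolddelta_x^*$. The numerical constants in $\Rnt$ and $\Rprob$ are calibrated precisely so that, once the offset $\maxnorm{\bolddelta_x-\bolddelta_x^*}\le \log(x/N_x)/\log x$ is absorbed, the two balls $B(\bolddelta_x^*,\Rnt)$ and $B(\mathbf Z,\Rprob)$ between them cover the full segment from $\bolddelta_x$ to $\mathbf Z$.
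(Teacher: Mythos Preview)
Your proof is correct and follows essentially the same approach as the paper's: reduce to showing that $\{\bolddelta_x\in A\}$ and $\{\mathbf Z\in A\}$ coincide on $\EC\cap\ECnt(A)\cap\ECprob(A)$, then exploit that $\maxnorm{\bolddelta_x-\mathbf Z}\le \Rnt'+\Rprob$ (with $\Rnt'=\Rnt-\log(x/N_x)/\log x$) to locate a boundary point of $A$ inside one of the two balls. The only cosmetic difference is that the paper invokes Lemma~\ref{lem:boundary} to place each ball entirely inside $A$ or $\simplex\setminus A$ and then exhibits a point in their intersection, whereas you run the intermediate-value argument directly on the segment to produce the boundary point; these are two formulations of the same idea.
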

	
	\begin{proof} Let $\EC$ be as in Lemma \ref{lem:coupling for factorizations}. Hence, it suffices to show that
		\begin{equation}
			\label{eq:sym diff D and Z on E}
				\bgg|	\PP\bg[\EC\cap\{\bolddelta_{f, x} \in A\}\bg] - \PP\bg[\EC\cap\{\mathbf Z \in A\}\bg]\bgg| \le \PP\bg[\ECnt(A)^c\bg] + \PP\bg[\ECprob(A)^c\bg] .
		\end{equation}
		We shall prove the following stronger statement: if $\EC\cap \ECnt(A)\cap \ECprob(A)$ occurs, then $\bolddelta_{f, x}\in A$ if, and only if, $\mathbf Z\in A$. 
		
		Indeed, assume that $\EC\cap \ECnt(A)\cap \ECprob(A)$ occurs, and we also have $\bolddelta_{f, x}\in A$. Aiming for a contradiction, we also assume that $\mathbf Z\notin A$. 
		
		Let $\Rnt' \coloneqq (\log x)^{-1}(2\cdot \log(x/N_x) + 3\cdot \log s(N_x))$. Note that $B(\bolddelta_{f, x}, \Rnt') \subseteq B(\bolddelta_{f, x}^*, \Rnt)$ by \eqref{eq:transition delta to delta*}. By assumption of $\ECnt(A)$, we know that $B(\bolddelta_{f, x},\Rnt')\cap\partial A=\emptyset$. Since $\bolddelta_{f, x} \in A$ and $B(\bolddelta_{f, x},\Rnt')\cap \simplex$ is connected (being the intersection of two convex sets), we have that
		\[
		B(\bolddelta_{f, x},\Rnt')\cap \simplex \subseteq A. 
		\]
		And by assumption of $\ECprob(A)$, we have $\mathbf Z \in \simplex \setminus A$ and $B(\mathbf Z,\Rprob)\cap\partial A=\emptyset$, which implies
		\[
		B(\mathbf Z,\Rprob)\cap \simplex \subseteq \simplex \setminus A. 
		\]
		In particular, we find that $B(\bolddelta_{f, x}, \Rnt')\cap B(\mathbf Z,\Rprob)\cap \simplex =\emptyset$. On the other hand, we know that $\maxnorm{\bolddelta_{f, x}-\mathbf Z}\le \Rnt' +\Rprob$ when $\EC$ occurs by Lemma \ref{lem:coupling for factorizations}(b). In particular, there exists a point $\mathbf y$ on the line segment connecting $\bolddelta_{f, x}$ and $\mathbf Z$ such that $\maxnorm{\bolddelta_{f, x}-\mathbf y}\le \Rnt'$ and $\maxnorm{\mathbf y-\mathbf Z}\le \Rprob$. Since the sets $\simplex$,  $B(\bolddelta_{f, x},\Rnt')$ and $B(\mathbf Z,\Rprob)$ are convex, we conclude that $\mathbf y$ lies in their intersection. But we had seen before that this intersection is the empty set. We have thus arrived at a contradiction. This completes the proof that if $\EC\cap \ECnt(A)\cap \ECprob(A)$ occurs, and we also know that $\bolddelta_{f, x}\in A$, then we must also have that $\mathbf Z\in A$.
		
		Conversely, we may show by a simple variation of the above argument that if $\EC\cap \ECnt(A)\cap \ECprob(A)$ occurs, and we also know that $\mathbf Z\in A$, then we must also have that $\bolddelta_{f, x}\in A$. This completes the proof of the lemma.
	\end{proof}
	
	To prove Theorem \ref{thm:fact into k parts}, we need to bound $\PP[\ECnt(A)^c]$ and $\PP[\ECprob(A)^c]$ when $A$ equals the set
	\[
	\simplex_{\mathbf{u}} \coloneqq \{\mathbf x\in \simplex: x_i\le u_i\ \forall i<k\} .
	\]
with $\mathbf u\in(0,1]^{k-1}$. Note that
\[
\partial \bg(\simplex_{\mathbf{u}}\bg) \subseteq \{\mathbf x \in \simplex : \exists i < k \text{ such that } x_i = u_i < 1\},
\]
(here it is important that the boundary of $\simplex_{\mathbf{u}}$ is defined with respect to the topology of $\simplex$). Therefore, 
\[
\ECnt(\simplex_{\mathbf{u}})^c\subseteq \bigcup_{\substack{i<k \\ u_i\neq 1}} \Bnt^{(i)}(u_i),
	\quad\text{where}\quad
	\Bnt^{(i)}(u) \coloneqq \bgg\{ \bg| \log D_{f,x,i} - u\log x\bg|\le 3\log(x/N_x^\flat)	\bgg\},
\]
as well as
\[
\ECprob(\simplex_{\mathbf{u}})^c\subseteq \bigcup_{\substack{i< k \\ u_i\neq 1}} \Bprob^{(i)}(u_i),
\quad\text{where}\quad
\Bprob^{(i)}(u) \coloneqq \bggg\{ \bg| Z_i - u \bg|\le \frac{2\Theta_x}{\log x}  \bggg\},
\]
We then have the following two crucial estimates:

	\begin{lem}
		\label{lem:bound of prob of ECnt}
	For $i\in\{1,2,\dots,k-1\}$ and $u\in(0,1)$, we have the uniform estimate
	\[
	\PP\bg[\Bnt^{(i)}(u) \bg] \ll  \frac{1}{(1+ u\log x)^{1-\alpha_i}(1+(1-u)\log x)^{\alpha_i}}  .
	\]
	\end{lem}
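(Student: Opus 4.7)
The plan is to expand $\PP[\Bnt^{(i)}(u)]$ into a sum over $n\le x$ and the factorization data, and then bound this sum by standard multiplicative-function estimates. First, the multiplicativity of $f$ together with its prescribed values at primes (Definition~\ref{dfn:Fktheta}) gives a clean decomposition: writing $n = s(n)\,n^\flat$ and $d = d^s d^\flat$ with $d^s\mid s(n)$ and $d^\flat\mid n^\flat$, one has
\[
\PP[D_{x,i}=d\mid N_x=n] = \alpha_i^{\omega(d^\flat)}(1-\alpha_i)^{\omega(n^\flat/d^\flat)}\cdot h_i^{(s(n))}(d^s),
\]
where, for each squarefull $s$, $h_i^{(s)}(\cdot)$ is a probability mass function on divisors of $s$. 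Combined with the identity $xs(n)/n = x/n^\flat$, the defining event of $\Bnt^{(i)}(u)$ becomes $|\log(d^\flat d^s) - u\log x| \le 3\log(x/n^\flat)$, which is conveniently free of the squarefull variables in its right-hand side.

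I would then partition the sum dyadically according to $Y=\lceil\log(x/n^\flat)\rceil\in\ZZ_{\ge 0}$. For fixed $Y$, the event forces $d^\flat d^s \in [x^u e^{-3(Y+1)},\,x^u e^{3(Y+1)}]$ and $n^\flat \in (xe^{-Y-1}, xe^{-Y}]$. Writing $m^\flat = n^\flat/d^\flat$, and noting that $s n^\flat \le x$ forces $Y \ge \log s$, each such contribution $T(Y,s,d^s)$ reduces to a double sum over coprime squarefree $(d^\flat, m^\flat)$ in specified dyadic intervals, weighted by $\alpha_i^{\omega(d^\flat)}(1-\alpha_i)^{\omega(m^\flat)}$. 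The crucial point is that the lower bound $Y \ge \log s$ is precisely what will furnish the $s$-decay needed to convergently sum over squarefull $s$.

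In the generic regime $u\log x,\,(1-u)\log x \gg 1$, two standard estimates handle $T(Y,s,d^s)$. Shiu's theorem (or the Halberstam--Richert estimate) bounds the inner $m^\flat$-sum by $\ll xe^{-Y}/\bigl(d^\flat((1-u)\log x)^{\alpha_i}\bigr)$ as long as $Y \ll (1-u)\log x$ and $\log d^s \ll u\log x$. A Mertens/Selberg--Delange-type bound gives
\[
\sum_{d^\flat\in[A,B]}\frac{\alpha_i^{\omega(d^\flat)}\mu^2(d^\flat)}{d^\flat}\ \ll\ (\log B-\log A)(\log A)^{\alpha_i-1}\ \ll\ (Y+1)(u\log x)^{\alpha_i-1}
\]
for $[A,B]=[x^u e^{-3(Y+1)}/d^s,\,x^u e^{3(Y+1)}/d^s]$. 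Multiplying the two yields
\[
T(Y,s,d^s)\ \ll\ \frac{xe^{-Y}(Y+1)}{(u\log x)^{1-\alpha_i}((1-u)\log x)^{\alpha_i}}.
\]
Summing over $Y\ge\log s$ produces a geometric tail of order $(1+\log s)/s$; averaging against $h_i^{(s)}(d^s)$ (which sums to $1$ over $d^s\mid s$) and summing over squarefull $s$ converges, since $\sum_{s\text{ squarefull}}(1+\log s)/s<\infty$. After dividing by $\lfloor x\rfloor$, this gives exactly the claimed bound.

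The main technical obstacle is the boundary regime where $u\log x \ll 1$ or $(1-u)\log x \ll 1$. Here the above interval estimates for the $d^\flat$- or $m^\flat$-sum degenerate, since the dyadic intervals straddle the value $1$ and the log-powers become of order $1$. In these extremal cases I would handle the probability directly: when $u\log x = O(1)$, the event $\Bnt^{(i)}(u)$ forces $D_{x,i}$ to be quite small, so a Markov-style inequality $\PP[D_{x,i}\le T]\le T^{\alpha_i}\,\EE[D_{x,i}^{-\alpha_i}]$ combined with an Euler-product evaluation of $\EE[D_{x,i}^{-\alpha_i}]$ (which in turn relies on the multiplicativity of $f$) produces the required bound $\ll(\log x)^{-\alpha_i}$. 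The symmetric case $(1-u)\log x=O(1)$ is handled analogously by exchanging the roles of the factor of $D_{x,i}$ and its complement within $N_x$, swapping $\alpha_i\leftrightarrow 1-\alpha_i$.
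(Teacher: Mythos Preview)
Your approach coincides with the paper's in its core: both decompose $n=s(n)\,n^\flat$, partition dyadically in $x/n^\flat$ (the paper writes $z=2^m$ where you write $e^Y$), and bound the resulting double sum $\sum \alpha_i^{\omega(d)}(1-\alpha_i)^{\omega(m)}$ via the estimates of Proposition~\ref{prel:partial summation bounds}. One minor difference in execution: the paper first invokes the preliminary bound $\PP[N_x^\flat \le x/(\log x)^3]\ll 1/\log x$ (cf.~\eqref{eq:small square-free part}) to cap the dyadic range at $O(\log_2 x)$ levels, whereas you sum the full geometric tail in $Y$.

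The substantive difference is in the boundary regime $u\to 0$ or $u\to 1$. The paper avoids any case analysis in $u$ by a symmetric splitting trick: within the double sum over $(d,m)$ with $dm\in(x/z,2x/z]$, it sets $S_1$ to be the contribution with $d\le\sqrt{2x/z}$ and $S_2$ the contribution with $m\le\sqrt{2x/z}$, and in each piece it sums first over the \emph{longer} variable, so that the relevant logarithm is always $\asymp\log x$. Since $S_1$ is empty unless $u\le 2/3$ and $S_2$ is empty unless $u\ge 1/3$, this yields a bound uniform in $u\in(0,1)$ with no extra work. Your Markov argument, by contrast, needs repair: the event $\Bnt^{(i)}(u)$ only forces $D_{x,i}\le x^u(x/N_x^\flat)^3$, with a \emph{random} upper bound, so one must control $\EE\bigl[(x/N_x^\flat)^{3s}D_{x,i}^{-s}\bigr]$ rather than simply $\EE[D_{x,i}^{-\alpha_i}]$. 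This does work for any fixed exponent $0<s<1/6$ (so that the squarefull sum $\sum_{b\ \text{sq-full}} b^{3s-1}$ converges), but not in general for $s=\alpha_i$ as you stated. The paper's $S_1/S_2$ device is both simpler and more robust, and worth adopting here.
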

	
	\begin{lem}
	\label{lem:bound of prob of ECprob}
		For $i\in\{1,2,\dots,k-1\}$ and $u\in(0,1)$, we have that
		\[
		\PP\bg[\Bprob^{(i)}(u) \bg] \ll  \frac{1}{(1+ u\log x)^{1-\alpha_i}(1+(1-u)\log x)^{\alpha_i}}  .
		\]
	\end{lem}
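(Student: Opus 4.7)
The plan is to start from the Poisson process realization used in the proof of Proposition \ref{prop:probabilistic dirichlet law}: under that construction $Z_i=S_i/S$ with $(S_1,\ldots,S_k)$ independent $\mathrm{Gamma}(\alpha_j,1)$ variables, so the marginal law of $Z_i$ is $\mathrm{Beta}(\alpha_i,1-\alpha_i)$ with density proportional to $t^{\alpha_i-1}(1-t)^{-\alpha_i}$. Integrating this density over $[u-\delta,u+\delta]\cap[0,1]$ and splitting according to whether $\delta$ is smaller or larger than $u$ and than $1-u$ yields the uniform estimate
\[
\PP\bg[|Z_i-u|\le \delta\bg]\ll \frac{\delta}{(u+\delta)^{1-\alpha_i}(1-u+\delta)^{\alpha_i}}, \qquad u\in(0,1),\ \delta>0,
\]
which with $\delta\asymp 1/\log x$ is precisely the target bound.

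To handle the random window $\delta = 2\Theta_x/\log x$, I would decompose $\Bprob^{(i)}(u)$ dyadically according to the size of $\Theta_x$. Setting $\delta_n\coloneqq 2^{n+1}/\log x$,
\[
\PP\bg[\Bprob^{(i)}(u)\bg] \le \PP\bg[|Z_i-u|\le \delta_0\bg] + \sum_{n\ge 1}\PP\bg[|Z_i-u|\le \delta_n,\ \Theta_x\ge 2^{n-1}\bg].
\]
The term with $n=0$ is already of the required order by the displayed Beta estimate. For $n\ge 1$, each summand is controlled by the minimum of the Beta estimate at $\delta_n$ and the exponential tail bound $\PP[\Theta_x\ge 2^{n-1}]\ll_\lambda e^{-\lambda 2^{n-1}}$, which follows from Lemma \ref{lem:properties of Theta} together with Markov's inequality for any fixed $\lambda>0$.

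The main obstacle is that summing $\min\{\mathrm{Beta}(\delta_n), e^{-\lambda 2^{n-1}}\}$ naively will cost a factor of $\log\log x$ at the dyadic scales where the Beta and exponential bounds are comparable. To eliminate this loss I would exploit the independence structure in the Poisson process realization: since $r(t)\ll\min\{t,t^{-2}\}$, the quantity $\Theta_x=\sum_j r(X_j\log x/S)$ is dominated by atoms with $V_j\log x$ of order $1$, while $Z_i=S_i/S$ is determined by the colouring of the macroscopic atoms. Splitting the underlying Poisson process at a threshold and invoking the Colouring Theorem yields two independent subprocesses: conditioning on the macroscopic one reduces the problem to a conditional anti-concentration estimate for $Z_i$ combined with an independent Campbell-type computation for the small-atom contribution to $\Theta_x$. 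Integrating the resulting conditional bounds against the Beta density of the macroscopic part then recovers the target uniformly, with no superfluous logarithmic factor; the delicate point is verifying that the conditional distribution of $Z_i$ given the macroscopic atoms still inherits the sharp Beta-density estimate, which uses the self-similarity of the Gamma distribution and the independence of $S_i$ and $S-S_i$.
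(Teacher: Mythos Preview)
Your first two steps are correct and match the paper: the marginal of $Z_i$ is $\mathrm{Beta}(\alpha_i,1-\alpha_i)$, and the estimate
\[
\PP\bigl[|Z_i-u|\le\delta\bigr]\ll \frac{\delta}{(u+\delta)^{1-\alpha_i}(1-u+\delta)^{\alpha_i}}
\]
is exactly Lemma~\ref{lem:beta estimate}. Your dyadic decomposition in $\Theta_x$ is also what the paper does, and you correctly identify that taking $\min\{\text{Beta bound},\ \text{tail bound}\}$ loses a factor of roughly $\log(1/\Delta)\asymp\log\log x$.

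The gap is in your proposed fix. The decoupling you sketch does not work as stated. You want to split the atoms $X_j$ at a threshold so that the small atoms govern $\Theta_x$ and the large atoms govern $Z_i$. But the atoms that dominate $\Theta_x=\sum_j r(V_j\log x)$ are precisely those with $V_j\log x\asymp 1$, i.e.\ $V_j\asymp 1/\log x$; there are order $\log\log x$ of them and their total mass is not $o(1/\log x)$, so they cannot be discarded from $Z_i$ without destroying the anti-concentration at scale $1/\log x$. Conversely, conditioning on the macroscopic atoms does not leave $Z_i$ with a Beta-type density: after truncation, the small-atom sums $S_i^{\mathrm{small}}$ and $S^{\mathrm{small}}-S_i^{\mathrm{small}}$ are \emph{not} Gamma distributed (they are sums over a Poisson process restricted to a bounded interval), so the Gamma self-similarity you invoke does not apply and the ``delicate point'' you flag is in fact the entire difficulty. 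More fundamentally, $\Theta_x$ is a function of $(V_j)_j$ alone while $Z_i$ depends on the same $(V_j)_j$ through all scales plus the colouring, so no spatial threshold separates them.

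The paper's route is different. It writes $\Theta_x'\ll\sum_m 2^{-|m|/2}\cdot 2^{\mp m}G(2^m)$ where $G(\lambda)=\#\{j:V_j\log x\in(\lambda,2\lambda]\}$, and reduces \eqref{eq:lem Bprob reduction 1} to the joint estimate (Lemma~\ref{lem:Bprob reduction 2})
\[
\EE\Bigl[\one_{|Z_i-u|\le\mu/\log x}\cdot\one_{G(\lambda)\ge4}\cdot G(\lambda)^2\Bigr]\ll(\lambda+\mu)\Delta.
\]
The key device is the Mecke equation: writing $G(\lambda)^2\one_{G(\lambda)\ge4}\ll\sum_{k>j}\one_{A_k,A_j\in(\lambda S/\log x,\,2\lambda S/\log x]}$ in the coloured Poisson realization and applying Palm calculus pulls two atoms $a_1,a_2$ out of the process. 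The remaining process is again the original Poisson process, so $(S_A,S_B)$ retains its exact $\mathrm{Gamma}(\alpha_i)\times\mathrm{Gamma}(1-\alpha_i)$ law, and one integrates the resulting Beta density against the positions of $a_1,a_2$. This is what replaces your independence argument and avoids any logarithmic loss.
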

	
Using Lemmas \ref{lem:sym diff D and Z}-\ref{lem:bound of prob of ECprob} together with Proposition \ref{prop:probabilistic dirichlet law} yields immediately Theorem \ref{thm:fact into k parts}. Thus, it remains to prove Lemmas \ref{lem:bound of prob of ECnt} and \ref{lem:bound of prob of ECprob}, which we do in the next two sections.

%	
%	Using the two above lemmas, we prove a weak version of Theorem \ref{thm:fact into k parts}:
%	
%	\begin{cor}
%		\label{cor:weak factorization thm}
%		
%		For any $\mathbf u \in (0, 1]^{k-1}$, we have 
%			\[
%			\PP[D_i \le N_x^{u_i} \ \, \forall i < k] = F_{\boldalpha}(\mathbf u) + O_{\boldalpha}\bggg(\sum_{\substack{1\le i\le k-1 \\ u_i \ne 1}} \frac{1}{u_i^{1-\alpha_i}(1-u_i)^{\alpha_i}\log x}\bggg). 
%			\]
%		%	As we can see in the statement of Theorem \ref{thm:fact into k parts}, there are some ranges for $\mathbf u$ where we can get an improvement. These improvements are dealt with directly at the end of the proof, without any use of the coupling. We write $\tau_k(n) \coloneqq \#\{(d_1, \ldots, d_k) \in \ZZ_{\ge 1}^k : d_1\cdots d_k = n\}$ as the number of factorization of $n$ into $k$ parts. 	
%	\end{cor}
%	
%	\begin{proof} This follows readily by combining Lemmas \ref{lem:coupling for factorizations}, \ref{lem:bound of prob of ECnt} and \ref{lem:bound of prob of ECprob}.		
%	\end{proof}
%	
%	We will show in Section \ref{sec:edge cases of factorization theorem} how to complete the proof of Theorem \ref{thm:fact into k parts}. 
%	
%	

	\begin{remss}(a) 
		The proofs of Lemmas \ref{lem:bound of prob of ECnt} and \ref{lem:bound of prob of ECprob} are rather involved. However, it is possible to obtain slightly weaker versions of them in a rather easy manner, which will then lead to a correspondingly weaker version of Theorem \ref{thm:fact into k parts}. 
		
		First, we prove a variation of Lemma \ref{lem:sym diff D and Z}. Let $\EC^*(A)$ be the event that $B(\mathbf Z,R^*)\cap \partial A=\emptyset$ with 
		\[
		R^* \coloneqq \frac{2\cdot \log(x/N_x) + 3\cdot \log s(N_x) + 2\cdot \Theta_x}{\log x}.
		\] 
	 	A straightforward modification of the proof of Lemma \ref{lem:sym diff D and Z} implies that
		\begin{equation}
			\label{eq:symm diff for weaker thm}
		\PP[\mathbf D_{f, x} \in A] = \PP[\mathbf Z \in A] +  O\left(\PP\bg[\EC^*(A)^c\bg]  + \frac{1}{\log x}\right) 
				\end{equation}
		for any measurable set $A$. With Chernoff's bound and Lemmas \ref{lem:properties of N}-\ref{lem:properties of Theta}, the probability that $R^*>\frac{\log_2 x}{\log x}$ is $\ll 1/\log x$. Moreover, for any $\delta\in(0,1/4]$, we can show by a direct computation with the Dirichlet distribution the uniform bound
		\[
		\PP\bgg[\exists j < k\ \text{such that}\ |Z_j-u_j|\le \delta \bgg] \ll \sum_{1\le j<k} \frac{\delta}{(u_j+\delta)^{1-\alpha_j}(1-u_j+\delta)^{\alpha_j}}
		\]
		for all $\mathbf u\in [0,1]^{k-1}$ (see Lemma \ref{lem:beta estimate} below for a proof of this claim). Taking $\delta=\frac{\log_2x}{\log x}$ proves that
				\[
		\PP\bg[D_{f,x,i} \le N_x^{u_i}\ \, \forall i < k\bg] = F_{\boldalpha}(\mathbf u)
		+ O\bgggg(\sum_{\substack{1 \le i < k \\ u_i \ne 1}} \frac{1}{(1+u_i\frac{\log x}{\log_2x})^{1-\alpha_i}(1+(1-u_i)\frac{\log x}{\log_2x})^{\alpha_i}}\bgggg).
		\]
		
		\medskip
		
		(b) As a matter of fact, the above proof could have worked with Arratia's original coupling from \cite{Arratia02}. Under this coupling, we have $\PP[M_x \ne N_x] \ll \frac{\log_2x}{\log x}$ (see Remark (b) at the end of Section \ref{sec:coupling}), so that Lemma \ref{lem:coupling for factorizations} would hold with part (a) replaced by the weaker bound $\PP[\EC^c]\ll \frac{\log_2x}{\log x}$, and thus \eqref{eq:symm diff for weaker thm} would hold with $\frac{\log_2x}{\log x}$ in place of $\frac{1}{\log x}$. 
	\end{remss}

	\section{Proof of Lemma \ref{lem:bound of prob of ECnt} using elementary number-theoretic techniques}\label{sec:boundary event NT}
	
	Note that 
	\begin{equation}
		\label{eq:small square-free part}
			\PP\bg[N_x^\flat\le x/(\log x)^3\bg] \le \frac{1}{\log x} \EE\bg[(x/N_x^\flat)^{1/3}\bg] \ll \frac{1}{\log x}
	\end{equation}
	by Lemma \ref{lem:properties of N} applied with $\alpha=\beta=1/3$. 
	 
	Fix now $i\in\{1,\dots,k-1\}$ and $u\in(0,1)$. For simplicity, let us write 
	\[
	\alpha\coloneqq \alpha_i
	\]
	for the remainder of this section. Given $z$, let us define
	\[
	S(z)\coloneqq \sum_{\substack{n\le x \\ n^\flat \in(\frac{x}{z},\frac{2x}{z}]}} \sum_{\substack{d_1\cdots d_k = n \\ d_i\in[x^uz^{-3}, x^uz^3] }} f(d_1,\dots,d_k). 
	\]
	Using \eqref{eq:small square-free part} and \eqref{eq:prob mass function f}, we find that
	\begin{equation}
		\label{eq:bounding Bnt 1}
		\PP\bg[\Bnt^{(i)}(u)\bg] \le \frac{1}{\floor{x}} \sum_{1\le m\le 5\log_2x} S(2^m) + O\bggg(\frac{1}{\log x}\bggg) .
	\end{equation}
	Hence Lemma \ref{lem:bound of prob of ECnt} follows readily from the estimate below:
	
	\begin{lem}
		There exists a universal constant $x_0\ge2$ such that if $x\ge x_0$ and $z\in[2,(\log x)^4]$, then we have the uniform estimate
		\[
		S(z) \ll \frac{\log z}{\sqrt{z}} \cdot \frac{x}{(1+ u\log x)^{1-\alpha}(1+(1-u)\log x)^{\alpha}}  .
		\]
	\end{lem}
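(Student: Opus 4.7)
The plan is to factor $n = n^\flat \cdot s(n)$, use the multiplicativity of $f$ to reduce the inner combinatorial sum to a weighted divisor sum on the squarefree part, and then apply Selberg--Delange style upper bounds dyadically in $a = a_i$.

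Set $\alpha = \alpha_i$. Writing any $n$ counted in $S(z)$ as $n = ms$ with $s = s(n)$ square-full and $m = n^\flat$ squarefree coprime to $s$, the condition $m \in (x/z, 2x/z]$ forces $s < z$. Property (c) of Definition \ref{dfn:Fktheta} lets us split each factorization $d_1 \cdots d_k = n$ as $d_j = a_j s_j$ with $a_j \mid m$, $s_j \mid s$, and $f(\mathbf d) = f(\mathbf a) f(\mathbf s)$. Using property (a) to evaluate $\sum_{s_1 \cdots s_k = s} f(\mathbf s) = 1$ and then using \eqref{eq:f on squarefrees} with $\sum_j \alpha_j = 1$ to sum out the $a_j$ for $j \neq i$, the problem reduces to bounding, uniformly over the square-full $s < z$ and the choice of $(s_1, \ldots, s_k)$,
\[
U \coloneqq \sum_{\substack{a\ \text{sqf},\, (a,s)=1 \\ a \in [A_1, A_2]}} \alpha^{\omega(a)} \sum_{\substack{e\ \text{sqf},\, (e,as)=1 \\ ae \in (x/z,\, \min(2x/z, x/s)]}} (1-\alpha)^{\omega(e)},
\]
where $A_1 \coloneqq x^u z^{-3}/s_i$, $A_2 \coloneqq x^u z^3/s_i$, and we write $a = a_i$, $e = m/a$.

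Next, I would estimate both sums using the uniform Selberg--Delange / Rankin-type inequality $\sum_{n \le T,\, \text{sqf}} \kappa^{\omega(n)} \ll T(1+\log T)^{\kappa - 1}$ valid uniformly for $\kappa \in (0,1)$ (obtainable by Hal\'asz's inequality or Rankin's trick applied to the Dirichlet series $\prod_p(1 + \kappa/p^s)$). After dropping the coprimality conditions (which only inflates the sums), the inner sum over $e$ is $\ll (x/z)/\bigl(a(1+\log((x/z)/a))^\alpha\bigr)$ when $a \le x/z$, and $O(1)$ in the residual range $a \in (x/z, 2x/z]$ (where only $e = 1$ contributes). Partitioning $[A_1,A_2]$ dyadically produces at most $O(\log z)$ intervals $[A, 2A]$, and the matching bound $\sum_{a \in [A, 2A],\, \text{sqf}} \alpha^{\omega(a)}/a \ll (1+\log A)^{\alpha - 1}$ turns each dyadic piece into
\[
\ll \frac{x/z}{(1+\log((x/z)/A))^\alpha (1+\log A)^{1-\alpha}}.
\]

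Since $s_i \le s < z \le (\log x)^4$, throughout the dyadic window we have $\log A = u \log x + O(\log z)$ and $\log((x/z)/A) = (1-u)\log x + O(\log z)$. In the central regime where both $u\log x$ and $(1-u)\log x$ dominate $\log z$, each dyadic piece is $\ll \frac{x/z}{(1+u\log x)^{1-\alpha}(1+(1-u)\log x)^\alpha}$; summing the $O(\log z)$ dyadic pieces and multiplying by the $\ll \sqrt z$ square-full integers $s < z$ yields exactly the target. In the degenerate regimes (small $u$ with $A_1 < 1$, or $u$ near $1$ with $A_2 > x/z$) the dominant contribution is a boundary piece ($a = 1$ with $e$ in a window of length $x/z$, or $e = 1$ with $a \in (x/z, 2x/z]$), which I would estimate directly and check against the target using $\sum_{0 \le k \ll \log z}(1+k)^{\alpha-1} \asymp (\log z)^\alpha \le \log z$ (and its $1-\alpha$ analog). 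The main obstacle will be organizing this case analysis cleanly so that the bound comes out uniformly in $u \in (0,1)$, together with tracking that the Selberg--Delange constants remain uniform in $\alpha$.
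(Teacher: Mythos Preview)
Your approach is essentially the same as the paper's: factor $n=n^\flat s(n)$, use multiplicativity of $f$ together with property (a) and \eqref{eq:f on squarefrees} to reduce to a sum of the form $\sum_{dm} \alpha^{\omega(d)}(1-\alpha)^{\omega(m)}$ over $d$ in a window of multiplicative width $z^{O(1)}$ around $x^u$ and $dm$ in a dyadic window around $x/z$, then pay $\sqrt{z}$ for the square-full part and apply Selberg--Delange bounds. The reduction is identical; the only difference is in how the final double sum is estimated.

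The paper avoids your dyadic partition and boundary case analysis entirely by a single symmetric split: write the double sum as $S_1+S_2$ according to whether $d\le\sqrt{2x/z}$ or $m\le\sqrt{2x/z}$. In $S_1$ one sums freely over $m\le 2x/(dz)$ first (getting $\ll \frac{x}{dz}(\log x)^{-\alpha}$ since here $x/(dz)\ge\sqrt{x/z}$), then sums $\alpha^{\omega(d)}/d$ over the short window via Proposition~\ref{prel:partial summation bounds} to pick up the $(\log z)(1+u\log x)^{\alpha-1}$ factor. The point is that $S_1$ is empty unless $u\le 2/3$, so $(\log x)^{-\alpha}\asymp(1+(1-u)\log x)^{-\alpha}$ automatically; symmetrically, $S_2$ is empty unless $u\ge 1/3$. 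This removes precisely the ``degenerate regime'' bookkeeping you flagged as the main obstacle, and no separate treatment of $A_1<1$ or $A_2>x/z$ is needed. Your dyadic argument would also go through, but the $S_1/S_2$ trick is cleaner. (One small redundancy in your writeup: once you take $\max$ over $(s_1,\dots,s_k)$ you can absorb $s_i\le z$ into the window and forget $s_i$, as the paper does by passing from $[x^uz^{-3},x^uz^3]$ on $d_i$ to $[x^uz^{-4},x^uz^3]$ on $d_i'$.)
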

	
	\begin{proof}
	If $d_1\cdots d_k=n$, then we may uniquely write $d_j=d_j'd_j''$ with $d_j'|n^\flat$ and $d_j''|s(n)$. In particular, $d_j''\le s(n)\le x/n^\flat\le z$ for all $j$. Hence, if $d_i\in[x^u z^{-3},x^uz^3]$, we must also have that $d_i'\in[x^uz^{-4},x^uz^3]$. Hence, using property (c) of Definition \ref{dfn:Fktheta}, we deduce that
	\[
	\sum_{\substack{d_1\cdots d_k = n \\ d_i\in[x^uz^{-3}, x^uz^3] }} f(d_1,\dots,d_k) 
		\le \sum_{\substack{d_1'\cdots d_k' = n^\flat \\ d_i'\in[x^uz^{-4}, x^uz^3] }} f(d_1',\dots,d_k') 
			\sum_{d_1''\cdots d_k'' = s(n)} f(d_1'',\dots,d_k'') .
	\]
	Relation \eqref{eq:f on squarefrees} implies that $f(d_1',\dots,d_k') = \prod_{j=1}^k \alpha_j^{\omega(d_j')}$. Moreover, using property (a) of  Definition \ref{dfn:Fktheta}, we find that the sum over the $d_j''$'s equals $1$. In conclusion,
	\[
	\sum_{\substack{d_1\cdots d_k = n \\ d_i\in[x^uz^{-3}, x^uz^3] }} f(d_1,\dots,d_k) 
	\le \sum_{\substack{d_1'\cdots d_k' = n^\flat \\ d_i'\in[x^uz^{-4}, x^uz^3] }} \prod_{j=1}^k \alpha_j^{\omega(d_j')}
= 	\sum_{\substack{d| n^\flat \\ d\in[x^uz^{-4}, x^uz^3] }}  \alpha^{\omega(d)} (1-\alpha)^{\omega(n^\flat/d)} . 
	\]
	Let $n^\flat=dm$ and $s(n)=b$. If $n^\flat\in[\frac{x}{z},\frac{2x}{z}]$ and $d\in[x^uz^{-4},x^uz^3]$, then $m\in [x^{1-u}z^{-4},2x^{1-u}z^3]$. Hence, we find that
	\begin{align}
	S(z) 	&\le \mathop{\sum\sum}_{\substack{dm \in[\frac{x}{z},\frac{2x}{z}] \\ d\in[x^uz^{-4}, x^uz^3] \\ m\in[x^{1-u}z^{-4},2x^{1-u}z^3] }} 
	 \alpha^{\omega(d)} (1-\alpha)^{\omega(m)} \sum_{\substack{b\le z \\ b\ \text{square-full}}} 1  \nn
	&\ll \sqrt{z} \mathop{\sum\sum}_{\substack{dm \in[\frac{x}{z},\frac{2x}{z}] \\ d\in[x^uz^{-4}, x^uz^3] \\ m\in[x^{1-u}z^{-4},2x^{1-u}z^3] }}  \alpha^{\omega(d)} (1-\alpha)^{\omega(m)} 
	= \sqrt{z}\cdot (S_1+S_2),
	\label{eq:bound in terms of S1 and S2}
	\end{align}
	where $S_1$ denotes the double sum over $d$ and $m$ with the additional constraint $d\le \sqrt{2x/z}$, and $S_2$ is the corresponding sum over pairs $(d,m)$ with $d>\sqrt{2x/z}$ and $m\le \sqrt{2x/z}$. 
	
	First, we bound $S_1$. Note that for the conditions on $d$ to be compatible we must have that $u\le 2/3$, provided that $x$ is large enough. Assuming this is the case, we apply twice Proposition \ref{prel:partial summation bounds} to find that
	\begin{align}
	S_1 &\le \sum_{\substack{d\le \sqrt{2x/z} \\  d\in[x^uz^{-4}, x^uz^3] }}  \alpha^{\omega(d)} \sum_{m\le \frac{2x}{dz}} (1-\alpha)^{\omega(m)}   \nn
	&\ll 	 \sum_{\substack{d\le \sqrt{2x/z} \\  d\in[x^uz^{-4}, x^uz^3] }}  \alpha^{\omega(d)} \cdot \frac{x(\log x)^{-\alpha}}{dz} \nn
	&\ll \frac{x(\log z)(1+ u\log x)^{\alpha-1}(\log x)^{-\alpha}}{z} .
	\label{eq:bound for S1}
	\end{align}
	
	We bound $S_2$ in a similar manner. For this sum to be non-empty, we have $u\ge1/3$. If this is the case, then we have
	\begin{align}
		S_2 
		&\le  \sum_{\substack{m\le \sqrt{2x/z} \\  m\in[x^{1-u}z^{-4}, 2x^{1-u}z^3] }}  (1-\alpha)^{\omega(m)} \sum_{d\le \frac{2x}{mz}} \alpha^{\omega(d)}  \nn
		&\ll  \sum_{\substack{m\le \sqrt{2x/z} \\  m\in[x^{1-u}z^{-4}, 2x^{1-u}z^3] }}  (1-\alpha)^{\omega(m)} \cdot \frac{x(\log x)^{\alpha-1}}{mz} \nn 
		&\ll \frac{x(\log z)(1+ (1-u)\log x)^{-\alpha}(\log x)^{\alpha-1}}{z} .
		\label{eq:bound for S2}
	\end{align}
	Combining \eqref{eq:bound in terms of S1 and S2}, \eqref{eq:bound for S1} and \eqref{eq:bound for S2}, while keeping in mind that $S_1=0$ if $u>2/3$ and that $S_2=0$ if $u<1/3$, completes the proof of the lemma, 
	\end{proof}

	\section{Proof of Lemma \ref{lem:bound of prob of ECprob} using probabilistic techniques}\label{sec:boundary event Prob}
	
	Since $\mathbf Z$ follows the distribution $\text{Dir}(\boldalpha)$, its $i$-th component $Z_i$ follows the $\text{Beta}(\alpha_i, 1 - \alpha_i)$ distribution, meaning that if $[a,b]\subseteq[0,1]$, then 
	\begin{equation}
		\label{eq:Zi distribution}
		\PP\bg[Z_i\in[a,b] \bg] = \frac{1}{\Gamma(\alpha_i)\Gamma(1-\alpha_i)}\int_a^b  \frac{\dee t}{t^{1-\alpha_i} (1-t)^{\alpha_i}} 
			= \frac{\sin(\pi\alpha_i)}{\pi} \int_a^b  \frac{\dee t}{t^{1-\alpha_i} (1-t)^{\alpha_i}} ,
	\end{equation}
	by Euler's reflection formula. 	For this reason, we need the following preliminary estimate. 
	
	\begin{lem}\label{lem:beta estimate}
	Uniformly for $u\in[0,1]$, $\alpha \in (0, 1)$ and $\delta > 0$, we have
		\[
		\sin(\pi\alpha) \int_{[u-\delta,u+\delta]\cap[0,1]} \frac{\dee t}{t^{1-\alpha}(1-t)^\alpha} \ll \frac{\delta}{(u+\delta)^{1-\alpha}(1-u+\delta)^\alpha} .
		\]
		In particular, if $\delta\ge1/\log x$, then 
		\[
		\sin(\pi\alpha) \int_{[u-\delta,u+\delta]\cap[0,1]} \frac{\dee t}{t^{1-\alpha}(1-t)^\alpha} \ll \frac{\delta\log x}{(1+u\log x)^{1-\alpha}(1+(1-u)\log x)^\alpha} .
		\]
	\end{lem}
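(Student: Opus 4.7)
The plan is to exploit the symmetry $t\mapsto 1-t$ of the integrand together with the fact that $\sin(\pi\alpha)$ decays near $\alpha\in\{0,1\}$ at exactly the right rate to tame the power singularities at $t=0$ and $t=1$. The substitution $t\mapsto 1-t$ interchanges $u\leftrightarrow 1-u$ and $\alpha\leftrightarrow 1-\alpha$ in both the integral and the target, so I may assume without loss of generality $u\le 1/2$. Then $1-u+\delta\ge 1/2$, hence $(1-u+\delta)^{\alpha}\asymp 1$, and the desired bound reduces to
\[
\sin(\pi\alpha)\int_{\max(u-\delta,0)}^{\min(u+\delta,1)} t^{\alpha-1}(1-t)^{-\alpha}\,\dee t \ll \frac{\delta}{(u+\delta)^{1-\alpha}}.
\]

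Next, I would split the integral at the midpoint $1/2$ into two pieces $J_1$ and $J_2$ (either possibly empty). On the domain of $J_1$ one has $1-t\ge 1/2$, hence the integrand is $\le 2t^{\alpha-1}$; on the domain of $J_2$ one has $t\ge 1/2$, hence the integrand is $\le 2(1-t)^{-\alpha}$. Both pieces then integrate to a difference of powers.

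The algebraic heart of the estimate is the elementary inequality
\[
b^{\alpha}-a^{\alpha}\le (b-a)\,b^{\alpha-1}\qquad(0\le a\le b,\ \alpha\in[0,1]),
\]
which for $a>0$ follows from writing $b^{\alpha}-a^{\alpha}=b^{\alpha}(1-(a/b)^{\alpha})$ and applying $1-y^{\alpha}\le 1-y$ for $y\in[0,1]$ (a direct consequence of $y^{\alpha}\ge y$), and which is an equality when $a=0$. Applied to $J_1$ and $J_2$ with $b-a\le 2\delta$ and combined with $\sin(\pi\alpha)/\alpha\le\pi$ and $\sin(\pi\alpha)/(1-\alpha)\le\pi$, this yields
\[
\sin(\pi\alpha)J_1\ll\delta\cdot \min(u+\delta,1/2)^{\alpha-1},\qquad \sin(\pi\alpha)J_2\ll\delta\cdot \min(1-u+\delta,1/2)^{-\alpha}.
\]
When $u+\delta\le 1/2$ only $J_1$ is present, the minimum equals $u+\delta$, and the bound matches the target directly. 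When $u+\delta>1/2$, both minima become $1/2$, giving a uniform bound $\ll\delta$; but in that regime $u\le 1/2$ forces $u+\delta\in(1/2,3/2]$, so $(u+\delta)^{1-\alpha}\asymp 1$ and the target is also of order $\delta$.

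Finally, the ``in particular'' statement follows immediately from the first one: the hypothesis $\delta\log x\ge 1$ gives $1+u\log x\le (\log x)(u+\delta)$ and $1+(1-u)\log x\le (\log x)(1-u+\delta)$, so
\[
(1+u\log x)^{1-\alpha}(1+(1-u)\log x)^{\alpha}\le (\log x)\,(u+\delta)^{1-\alpha}(1-u+\delta)^{\alpha},
\]
which after rearrangement turns the first bound into the second. The main obstacle I anticipate is purely book-keeping across the sub-regimes $u+\delta\lessgtr 1/2$ and $u\lessgtr\delta$; once the key algebraic inequality above is in place, each case collapses to a short calculation.
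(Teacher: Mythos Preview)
Your proof is correct and shares the opening symmetry reduction to $u\le 1/2$ with the paper, but the remainder is organized differently. The paper first disposes of $\delta\ge 1/4$ by observing that the full Beta integral equals $\pi/\sin(\pi\alpha)$, then (with $u+\delta\le 3/4$ ensuring $1-t\asymp 1$ throughout) splits into the two cases $\delta\le u/2$ and $\delta\ge u/2$: in the first it bounds the integrand pointwise by $(u/2)^{\alpha-1}$, in the second it integrates from $0$ to $3\delta$ and invokes $\sin(\pi\alpha)/\alpha\ll 1$. Your approach instead splits the range of integration at $t=1/2$ and handles both pieces via the single inequality $b^{\alpha}-a^{\alpha}\le (b-a)b^{\alpha-1}$, which neatly interpolates between the paper's two cases (it reduces to the pointwise bound when $a\approx b$ and to the ``integrate from $0$'' estimate when $a=0$). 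The payoff is that you avoid the preliminary $\delta\ge 1/4$ reduction and the $\delta\lessgtr u/2$ dichotomy; the cost is the extra $J_2$ piece, which you correctly show is harmless since $u\le 1/2$ forces $1-u+\delta\ge 1/2$. Your derivation of the ``in particular'' clause is cleaner than what the paper leaves implicit. One cosmetic remark: the endpoint cases $\alpha\in\{0,1\}$ are formally $0\cdot\infty$ when the interval touches $0$ or $1$; the paper dismisses them as trivial, and your argument covers $\alpha\in(0,1)$ with uniform constants, so a one-line mention would suffice.
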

	
	\begin{proof} Note that both sides of the claimed inequality are invariant under the change of variables $(\alpha,u)\mapsto (1-\alpha,1-u)$. Hence, we may assume without loss of generality that $u\in[0,1/2]$. In addition, observe that
		\[
		\sin(\pi\alpha)\int_0^1 \frac{\dee t}{t^{1-\alpha}(1-t)^\alpha}  = \frac{\sin(\pi\alpha)}{\Gamma(\alpha)\Gamma(1-\alpha)} = \pi. 
		\]
		Hence the lemma is trivially true if $\delta > 1/4$. 
		
		We have thus reduced the proof to the case when $\alpha\in(0,1)$, $u\in[0,1/2]$ and $\delta\in[0,1/4]$. In particular, $u+\delta\le 3/4$, so that $1-t\in[1/4,1]$ for all $t\in[0,u+\delta]$. It thus suffices to prove that 
	\begin{equation}
		\label{eq:beta bound - new goal}
			\sin(\pi\alpha) \int_{[u-\delta,u+\delta]\cap[0,1]} \frac{\dee t}{t^{1-\alpha}} \ll \frac{\delta}{(u+\delta)^{1-\alpha}} .
	\end{equation}

	Assume first that $\delta\le u/2$. We then have $t\ge u/2$ whenever $t\in[u-\delta,u+\delta]$. Using also the trivial bound $\sin(\pi\alpha)\le1$, we conclude that
	\[
	\sin(\pi\alpha) \int_{[u-\delta,u+\delta]\cap[0,1]} \frac{\dee t}{t^{1-\alpha}} \le \int_{[u-\delta,u+\delta]\cap[0,1]} \frac{\dee t}{(u/2)^{1-\alpha}}  \ll \frac{\delta}{(u+\delta)^{1-\alpha}} .
	\]
	This proves the lemma in this case. 
	
	Finally, assume that $\delta\ge u/2$. We then use that
	\[
	\sin(\pi\alpha) \int_{[u-\delta,u+\delta]\cap[0,1]} \frac{\dee t}{t^{1-\alpha}} 
	\le \sin(\pi\alpha)\int_0^{3\delta} \frac{\dee t}{t^{1-\alpha}} 
	=\frac{\sin(\pi \alpha)}{\alpha} \cdot (3\delta)^\alpha 
	\ll \delta^\alpha \asymp \frac{\delta}{(u+\delta)^{1-\alpha}} .
	\]
	This completes the proof of the lemma in all cases.
	\end{proof}
	
	Let us now show Lemma \ref{lem:bound of prob of ECprob}. For simplicity of notation, let us fix $i\in\{1,2,\dots,k\}$ and $u\in(0,1)$, and let us set $\alpha \coloneqq \alpha_i$ and
	\[
	\Delta \coloneqq \frac{1}{(1+ u\log x)^{1-\alpha}(1+(1-u)\log x)^{\alpha}} .
	\]
	Thus, our goal is to show that
	\begin{equation}
		\label{eq:Bprob rewrite}
			\PP\bggg[ \bg| Z_i - u \bg|\le \frac{2\Theta_x}{\log x}\bggg]  \ll \Delta .
	\end{equation}

	Recall that 
	\[
	\Theta_x=\sum_{j\ge1}r(V_j\log x),
	\]
	and that there exists an absolute constant $c\ge1$ such that $r(y)\le c \min\{y,y^{-2}\}$ for all $y>0$ (see \eqref{eq:h(t) estimate}). Since there are at most $10$ indices $j$ such that $V_j\ge 0.1$,  we find that
	\[
	\Theta_x\le \Theta_x'+10c,\quad\text{where}\quad \Theta_x'\coloneqq \sum_{j\ge1:\ V_j<0.1}r(V_j\log x)  .
	\]
	
	Now, using Lemma \ref{lem:beta estimate} and relation \eqref{eq:Zi distribution}, we have that
	\[
	\PP\bggg[ \bg| Z_i - u \bg|\le \frac{200c}{\log x}\bggg]  \ll \Delta .
	\]
	On the other hand, if $\frac{200c}{\log x}<|Z_i-u|\le \frac{2\Theta_x}{\log x}\le \frac{20c+2\Theta_x'}{\log x}$, then we must have $\Theta_x'>90c$. In particular, there exists $m\in\ZZ$ such that $2^m>40c$ and $\Theta_x'\in(2^m,2^{m+1}]$, whence $|Z_i-u|\le \frac{20c + 2^{m+2}}{\log x}$. We thus conclude that
	\[
				\PP\bggg[ \bg| Z_i - u \bg|\le \frac{20c+2\Theta_x'}{\log x}\bggg] \le \sum_{m\in\ZZ:\ 2^m>40c} 
		\PP\bggg[ \bg| Z_i - u_i \bg|\le \frac{20c+2^{m+2}}{\log x},\ \Theta_x>2^m\bggg]  + O(\Delta). 
	\]
	Therefore Lemma \ref{lem:bound of prob of ECprob} will follow if we can prove that
	\begin{equation}
		\label{eq:lem Bprob reduction 1}
		\PP\bggg[ \bg| Z_i - u \bg|\le \frac{20c+4\kappa}{\log x},\ \Theta_x'>\kappa \bggg]  \ll \frac{\Delta}{\kappa} 
		\qquad\text{for all}\ \kappa\ge 40c.
	\end{equation}
	
	We shall make a further reduction. If we set
	\[
	G(\lambda)\coloneqq \#\bg\{j\ge1: V_j\log x\in (\lambda,2\lambda] \bg\},
	\]
	then we have
	\[
	\begin{split}
	\Theta_x' =\sum_{j\ge1:\ V_j<0.1} r(V_j\log x) 
		&\le c\sum_{j\ge1:\ V_j<0.1}\min\bg\{V_i\log x,(V_i\log x)^{-2}\bg\}  \\
		&\le  c\sum_{\substack{m\ge0 \\ 2^m\le 0.1\log x}} \frac{G(2^m)}{4^m} + c \sum_{m<0} 2^{m+1} G(2^m) \\
		&\le 5c \max_{\substack{m\ge0 \\ 2^m\le 0.1\log x}} \bggg(\frac{G(2^m)}{2^{3m/2}} \bggg) + 5c \max_{m<0}\bg(2^{m/2} G(2^m)\bg),
			\end{split}
	\]
	since $\sum_{m\ge0}2^{-m/2} <5$ and $\sum_{m<0}2^{1+m/2}< 5$. We thus find that
	\[
	\begin{split}
		\PP\bggg[ \bg| Z_i - u \bg|\le \frac{20c+4\kappa}{\log x},\ \Theta_x'>\kappa \bggg]  
		&\le \sum_{\substack{m\ge0 \\ 2^m\le 0.1\log x}} \PP\bggg[ \bg| Z_i - u \bg|\le \frac{20c+4\kappa}{\log x},\ G(2^m)> \frac{2^{3m/2} \kappa}{10c}  \bggg]   \\
		&\ +\sum_{m<0} 	\PP\bggg[ \bg| Z_i - u \bg|\le \frac{20c+4\kappa}{\log x},\ G(2^m)> \frac{\kappa 2^{-m/2}}{10c} \bggg]  .
	\end{split}
	\]
	Note that in both sums, we have $G(2^m)\ge 4$, since $\kappa\ge 40c$. Hence, Markov's inequality implies
	\[
	\begin{split}
		\PP\bggg[ \bg| Z_i - u \bg|\le \frac{20c+4\kappa}{\log x},\ \Theta_x'>\kappa \bggg]  
		&\le \sum_{\substack{m\ge0 \\ 2^m\le 0.1\log x}} \frac{100c^2}{\kappa^22^{3m}}
			\EE\bgg[ \one_{| Z_i - u |\le \frac{20c+4\kappa}{\log x} } \cdot \one_{G(2^m) \ge4} \cdot G(2^m)^2\bgg] \\
		&\ +\sum_{m<0} \frac{100c^22^m}{\kappa^2} 
				\EE\bgg[ \one_{| Z_i - u |\le \frac{20c+4\kappa}{\log x} }\cdot   \one_{G(2^m) \ge4}\cdot G(2^m)^2\bgg]  .
	\end{split}
	\]
	
	This reduces \eqref{eq:lem Bprob reduction 1}, and thus Lemma \ref{lem:bound of prob of ECprob}, to proving the following estimate:

	\begin{lem}
		\label{lem:Bprob reduction 2} Uniformly for $\mu\ge1$ and $\lambda\in(0,0.1\log x]$, we have 
			\[
			\EE\bggg[ \one_{| Z_i - u |\le \frac{\mu}{\log x} }  \cdot   \one_{G(\lambda) \ge4} \cdot G(\lambda)^2  \bggg] 
				\ll (\lambda+\mu)\cdot \Delta. 
			\]
	\end{lem}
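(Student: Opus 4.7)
Since $G^2\le 2G(G-1)$ whenever $G\ge 2$, it suffices to prove the stated bound for $E\coloneqq\EE\bg[\one_{|Z_i-u|\le\mu/\log x}\cdot G(\lambda)(G(\lambda)-1)\bg]$. We plan to use the Poisson-process representation of $\mathbf V,\mathbf Z$ from the proof of Proposition \ref{prop:probabilistic dirichlet law}: let $\Pi$ be the marked Poisson process on $\RR_{>0}\times\{1,\dots,k\}$ of intensity $\alpha_c\frac{e^{-x}}{x}\dee x$ on layer $c$, set $S=\sum_{(x,c)\in\Pi}x$ and $S_c=\sum_{(x,c')\in\Pi:\,c'=c}x$, so that $Z_i=S_i/S$ and the (unordered) $V_j$'s are the normalised values $X_j/S$. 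The $S_c$'s being independent $\text{Gamma}(\alpha_c,1)$, a standard change of variables shows that $\mathbf Z$ is independent of $S$, and that $S\sim\text{Exp}(1)$.

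Setting $\lambda'\coloneqq\lambda/\log x$, the Slivnyak--Mecke formula applied to the pair-sum expansion of $G(\lambda)(G(\lambda)-1)$ gives, after the sum $\sum_{c_1,c_2}\alpha_{c_1}\alpha_{c_2}=1$ is pulled outside,
\[
E=\iint_{\RR_{>0}^2}\PP\bg[|\tilde Z_i-u|\le\mu/\log x,\ x_1/\tilde S,\,x_2/\tilde S\in (\lambda',2\lambda']\bg]\frac{e^{-x_1-x_2}}{x_1 x_2}\dee x_1\dee x_2 ,
\]
where $\tilde S=S+x_1+x_2$ and $\tilde Z_i=(S_i+x_1\one_{c_1=i}+x_2\one_{c_2=i})/\tilde S$. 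On the event that the spatial condition holds, $x_1+x_2\le 4\lambda'\tilde S$, and hence $|\tilde Z_i-Z_i|\le(x_1+x_2)/\tilde S\le 4\lambda/\log x$ regardless of $c_1,c_2$. The $\tilde Z_i$-indicator can therefore be replaced by $\one_{A'}$ where $A'=\{|Z_i-u|\le(\mu+4\lambda)/\log x\}$. The crucial observation is that $A'$ depends only on $\mathbf Z$ while the spatial condition depends only on $S$, so their probabilities factor by the independence from the previous paragraph:
\[
\PP[A',\ \text{spatial}]=\PP[A']\cdot\PP[\text{spatial}].
\]
Applying Lemma \ref{lem:beta estimate} with $\delta=(\mu+4\lambda)/\log x\ge 1/\log x$ (using $\mu\ge1$) yields $\PP[A']\ll(\mu+\lambda)\Delta$, and reapplying Mecke in reverse to the remaining spatial integral identifies it with $\EE[G(\lambda)(G(\lambda)-1)]$.

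It remains to show that $\EE[G(\lambda)(G(\lambda)-1)]\ll 1$ uniformly in $\lambda\in(0,0.1\log x]$. This is a direct computation: using $S\sim\text{Exp}(1)$, the probability inside the integral equals $e^{x_1+x_2-M/(2\lambda')}-e^{x_1+x_2-m/\lambda'}$ with $M=\max(x_1,x_2)$ and $m=\min(x_1,x_2)$, and vanishes unless $M\le 2m$. Assuming $x_1\le x_2$ (then doubling) and substituting $u=x_2/(2\lambda')$, $v=x_1/\lambda'$ reduces the task to bounding
\[
2\iint_{v/2\le u\le v}\frac{e^{-u}-e^{-v}}{uv}\dee u\dee v \le 2\iint_{v/2\le u\le v}\frac{e^{-u}}{2u}\dee u\dee v \le 1,
\]
by the elementary inequality $e^{-u}-e^{-v}\le(v-u)e^{-u}\le(v/2)e^{-u}$. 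The main obstacle in the proof is the factorisation step: recognising that the Gamma/Dirichlet independence of $\mathbf Z$ from the total mass $S$ is exactly what allows one to decouple the Beta-type small-interval estimate of Lemma \ref{lem:beta estimate} from the purely spatial second factorial moment of $G(\lambda)$. Once this decoupling is in place, everything else reduces to an explicit calculation.
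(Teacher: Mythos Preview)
Your proof is correct and takes a genuinely different, more conceptual route than the paper's. Both proofs start the same way: use the Poisson representation from Proposition~\ref{prop:probabilistic dirichlet law}, apply Mecke to the ordered-pair expansion of $G(\lambda)(G(\lambda)-1)$, and then relax the perturbed condition $|\tilde Z_i-u|\le\mu/\log x$ to a condition on the unperturbed ratio (which costs only an extra $O(\lambda)/\log x$ in the window). From this point the arguments diverge. The paper splits the process into two colours (component $i$ versus the rest), restricts to same-colour pairs via the case split $E_A+E_B$, and then carries out the four-fold integral explicitly by the change of variables $t=s_1/(s_1+s_2)$, $s=s_1+s_2$; the factorisation of the Beta integral in $t$ from the exponential integral in $s$ emerges from the Jacobian. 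You instead invoke the structural fact that $\mathbf Z=(S_1/S,\dots,S_k/S)$ is independent of $S$ (the standard Gamma/Dirichlet independence), which lets you factor the probability directly as $\PP[A']\cdot\PP[\text{spatial}]$, and then recognise the spatial integral as $\EE[G(\lambda)(G(\lambda)-1)]$ by Mecke in reverse. This is cleaner: it avoids the colour case split entirely, and it isolates the reason the argument works. Your explicit computation of $\EE[G(\lambda)(G(\lambda)-1)]\le 1$ via $S\sim\text{Exp}(1)$ is also tidier than the corresponding bound buried in the paper's four-fold integral. One small expositional wrinkle: in your displayed formula for $E$ after Mecke, the quantity $\tilde Z_i$ still depends on the colours $c_1,c_2$, so the sum $\sum_{c_1,c_2}\alpha_{c_1}\alpha_{c_2}$ cannot literally be ``pulled outside'' until after you have relaxed to the colour-independent event $A'$; the order of operations should be stated more carefully, but the mathematics is unaffected.
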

	
	\begin{proof}  Let us call $E$ the quantity we seek to bound from above. 
		Consider two independent Poisson processes $(A_j)_{j \ge 1}$ with intensity $\alpha\frac{e^{-x}}{x} \dee x$ and $(B_j)_{j \ge 1}$ with intensity $(1-\alpha)\frac{e^{-x}}{x} \dee x$. Hence, the union of the two processes is a new Poisson process  of intensity $\frac{e^{-x}}{x}\dee x$. 
		The sum $S_A \coloneqq \sum_{j\ge1} A_j$ has distribution $\text{Gamma}(\alpha, 1)$ and the sum $S_B \coloneqq \sum_{j\ge1} B_j$ has distribution $\text{Gamma}(1-\alpha, 1)$. If we also set $S\coloneqq S_A+S_B$, then, as we saw in the proof of Proposition \ref{prop:probabilistic dirichlet law}, we have 
		\[
		E
		=\EE\bggg[ \one_{| S_A/S- u |\le \frac{\mu}{\log x} } \cdot \one_{G_A(\lambda)+G_B(\lambda)\ge 4}\cdot  \bgg( G_A(\lambda)+G_B(\lambda) \bgg)^2 \bggg],
		\]
		where $G_A(\lambda) = \sum_{j\ge1} \one_{A_j\log x\in(\lambda S,2\lambda S]}$ and $G_B(\lambda)= \sum_{j\ge1} \one_{B_j\log x\in(\lambda S,2\lambda S]}$. We know $G_A(\lambda)+G_B(\lambda)\ge4$. Hence, if $G_A(\lambda)\ge G_B(\lambda)$, then we must have $G_A(\lambda)\ge 2$ and $G_A(\lambda)\le G_A(\lambda)^2/2$, whence $G_A(\lambda) \le 2\mathop{\sum\sum}_{k>j\ge1} \one_{A_k\log x,\, A_j\log x\in (\lambda S,2\lambda S]}$. So, we find that
		\[
		\begin{split}
		 \bgg( G_A(\lambda)+G_B(\lambda) \bgg)^2 \le 4G_A(\lambda)^2 
		 	&= 4 G_A(\lambda) + 8 \mathop{\sum\sum}_{k>j\ge1} \one_{A_k\log x,\, A_j\log x\in (\lambda S,2\lambda S]}  \\
		 	&\le 16 \mathop{\sum\sum}_{k>j\ge1} \one_{A_k\log x,\, A_j\log x\in (\lambda S,2\lambda S]}  .
			\end{split}
		\]
		The analogous inequality also holds  when $G_B(\lambda)\ge G_A(\lambda)$, with the roles of $A$ and $B$ reversed. We conclude that
		\[
		E
		\le 16 (E_A+E_B),
		\]
		where
		\[
		E_A \coloneqq \EE\bggg[ \one_{|S_A/S - u |\le \frac{\mu}{\log x} }  \mathop{\sum\sum}_{k>j\ge1} \one_{A_k\log x,\, A_j\log x\in (\lambda S,2\lambda S]}  \bggg]
		\]
		and $E_B$ is defined analogously. Using Mecke's equation (cf. Proposition \ref{prel:mecke}), we find that
		\[
		\begin{split}
		E_A &= \alpha^2\iint\limits_{2a_1>a_2>a_1>0}
			\EE\bggg[ \one_{| \frac{a_1+a_2+S_A}{a_1+a_2+S} - u |\le \frac{\mu}{\log x} } \prod_{j=1}^2 \one_{a_j\log x \in (\lambda (a_1+a_2+S),2\lambda (a_1+a_2+S)]} \bggg] \frac{e^{-a_1-a_2}}{a_1a_2}\dee a_1\dee a_2 \\
			&= 	\frac{\alpha^2}{\Gamma(\alpha)\Gamma(1-\alpha)} 
			\iiiint\limits_{  
				\substack{
					2a_1>a_2>a_1>0,\ s_1,s_2>0 \\  
					| \frac{a_1+a_2+s_1}{a_1+a_2+s_1+s_2} - u |\le \frac{\mu}{\log x} \\
					 a_j\log x \in (\lambda (a_1+a_2+s_1+s_2),2\lambda (a_1+a_2+s_1+s_2)] \ (j=1,2) 
				} }
				  \frac{e^{-a_1-a_2-s_1-s_2}}
				  {a_1a_2s_1^{1-\alpha}s_2^\alpha}\dee a_1\dee a_2 \dee s_1\dee s_2,
				\end{split}
		\]
		where we used that the $a_j$'s lie in the same dyadic interval to deduce that $a_2<2a_1$. 		 We make the change of variables $t=s_1/(s_1+s_2)$ and $s=s_1+s_2$. Since $\lambda/\log x\le 0.1$ and $a_1<a_2<2a_1$, the conditions $a_j\log x \in (\lambda (a_1+a_2+s),2\lambda (a_1+a_2+s)]$ for $j=1,2$ imply that $a_j\log x \in(\lambda s,5\lambda s]$. Knowing also that $ |\frac{a_1+a_2+s_1}{a_1+a_2+s} - u |\le \frac{\mu}{\log x}$, we find $|t-u|=|\frac{s_1}{s_1+s_2} - u|\le \frac{10\lambda+\mu}{\log x}$. Finally, we use Euler's reflection formula to write $\Gamma(\alpha)\Gamma(1-\alpha)=\pi/\sin(\pi\alpha)$. We conclude that
		\[
		E_A \le	\frac{\sin(\pi \alpha)}{\pi}
		\iiiint\limits_{  
			\substack{
				a_2>a_1>0,\ s>0,\ t\in[0,1] \\  
				| t-u|\le (10\lambda+\mu)/\log x\\
				a_j\log x \in (\lambda s, 5\lambda s] \ (j=1,2) 
		} }
		\frac{e^{-a_1-a_2-s}}
		{a_1a_2t^{1-\alpha}(1-t)^\alpha}\dee a_1\dee a_2 \dee s\dee t .
		\]
For every fixed value of $s$, the integral over $a_1$ and $a_2$ is $\le(\log 5)^2$, since $\int_w^{5w}\frac{e^{-a}}{a}\dee a \le \log5$ for any $w>0$. We also have $\int_0^\infty e^{-s}\dee s=1$. We thus conclude that
\[
E_A\le \frac{(\log 5)^2\sin(\pi \alpha) }{\pi} \int\limits_{\substack{t\in[0,1] \\ |t-u|\le (10\lambda+\mu)/\log x}} \frac{\dee t}{t^{1-\alpha}(1-t)^\alpha} .
\]
Using Lemma \ref{lem:beta estimate} shows that $E_A\ll (\lambda+\mu)\Delta$. The same estimate holds for $E_B$ too, thus completing the proof of the lemma.
	\end{proof}

	%%%% Appendix %%%%
	
\newpage

	\clearpage
	\thispagestyle{fancy}
	\fancyhf{} % sets both header and footer to nothing
	\renewcommand{\headrulewidth}{0cm}
	\lhead[{\scriptsize \thepage}]{}
	\rhead[]{{\scriptsize\thepage}}
	\part{Appendices}
		\appendix

	Because this paper lies in the intersection of number theory and probability theory, readers coming from one of these fields might not be familiar with standard results of the other one. For this reason, we gather here some key results from both fields. We present these results in the following two sections.

	\section{Tools from Number Theory} 
	\label{sec:prel}

	Let $\theta(x) \coloneqq \sum_{p \le x} \log p$ and let $\psi(x) \coloneqq \sum_{n \le x} \Lambda(n)$ where  
	\[
	\Lambda(n) = \begin{cases}
		\log p &\text{if $n = p^k$ for some prime power $p^k$,} \\
		0 &\text{otherwise.}
	\end{cases}
	\]
	Understanding these functions gives information about the distribution of primes. In 1896, Charles de la Vallée Poussin and Jacques Hadamard proved the Prime Number Theorem, thus establishing that $\psi(x)\sim\theta(x)\sim x$ as $x\to\infty$. De la Vallée Poussin proved a more precise version of these estimates, with the error term $O\bg(xe^{-c\sqrt{\log x}}\bg)$ for some positive absolute constant $c$ (see, for example, \cite[Chapter 8]{Koukoulo19}). For the purposes of the present paper, however, the following weaker version of his result suffices:
	
	\begin{prop}[The Prime Number Theorem]
		\label{pnt}
		For $x \ge 2$, we have 
		\[
		\theta(x) = x + \bigoo{\frac{x}{(\log x)^3}}
		\]
		and 
		\[
		\psi(x) = x+\bigoo{\frac{x}{(\log x)^3}}.
		\]
	\end{prop}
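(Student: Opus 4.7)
The plan is to invoke the classical complex-analytic proof of the Prime Number Theorem, since the error term $O(x/(\log x)^3)$ claimed here is vastly weaker than what the standard proof delivers. More precisely, I would prove the stronger estimate $\psi(x) = x + O(x \exp(-c\sqrt{\log x}))$ and then deduce the statement for $\theta(x)$ from the elementary identity
\[
\psi(x) - \theta(x) = \sum_{k \ge 2} \theta(x^{1/k}) \ll \sqrt{x}\,\log x,
\]
which already absorbs into the allowed error.

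To handle $\psi(x)$, I would begin by establishing a classical zero-free region for the Riemann zeta function: there exists an absolute constant $c_0 > 0$ such that $\zeta(s) \neq 0$ throughout $\{\sigma \ge 1 - c_0/\log(|t|+2)\}$, together with the standard bounds $\zeta'(s)/\zeta(s) \ll \log^2(|t|+2)$ in that region. The zero-free region is produced via the nonnegativity inequality $3 + 4\cos\theta + \cos 2\theta \ge 0$ applied to $\log|\zeta(s)^3 \zeta(s+it)^4 \zeta(s+2it)|$, combined with the simple pole of $\zeta$ at $s=1$ and with convexity bounds for $\zeta$ on the critical strip arising from the functional equation and Stirling's formula.

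The next step is to apply a truncated Perron formula to the Dirichlet series $-\zeta'(s)/\zeta(s) = \sum_{n \ge 1} \Lambda(n) n^{-s}$, writing
\[
\psi(x) = \frac{1}{2\pi i} \int_{c-iT}^{c+iT} \biggl(-\frac{\zeta'(s)}{\zeta(s)}\biggr) \frac{x^s}{s}\, ds + (\text{error}),
\]
with $c = 1 + 1/\log x$ and $T$ to be chosen. One then shifts the contour leftward to $\Re(s) = 1 - c_0/(2\log T)$, picking up the residue $x$ at the simple pole $s=1$ of $\zeta$. The remaining integrals along the horizontal and vertical shifted segments are estimated using the bound on $\zeta'/\zeta$ in the zero-free region; the balance between the two error sources is optimized by taking $T = \exp(\sqrt{\log x})$, producing the exponential saving.

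The main technical obstacle is not any of the above individually but the book-keeping required for a clean, uniform contour shift, in particular controlling $\zeta'/\zeta$ near the boundary of the zero-free region and handling the truncated Perron error for the non-smooth sum $\psi(x)$. Since all of this is entirely classical and already worked out in the monograph \cite{Koukoulo19} that is used as a reference throughout the paper, in practice I would simply cite \cite[Theorem 8.1]{Koukoulo19}, from which both estimates follow immediately (with an error far smaller than $x/(\log x)^3$).
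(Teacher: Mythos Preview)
Your proposal is correct and matches the paper's approach: the paper's entire proof is the single sentence ``For a proof, see \cite[Chapter 8]{Koukoulo19},'' which is exactly the citation you land on at the end. Your additional sketch of the classical zero-free-region-plus-Perron argument is accurate but goes beyond what the paper provides.
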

	
	We shall also need a stronger version of Mertens's estimate, which in its classical form, yields an estimate for the product $\prod_{p \le x} (1-1/p)$. 
	
	\begin{prop}[Strong Mertens's estimate]
		\label{prop:mertens}
		For $x \ge 2$, we have 
		\[
			\sum_{p^k \le x} \frac{1}{kp^k} = \log_2 x + \gamma + \bigoo{\frac{1}{(\log x)^3}}.
		\]
	\end{prop}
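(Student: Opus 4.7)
The plan is to translate the sum over prime powers into a Stieltjes integral against Chebyshev's $\psi$ and then apply the Prime Number Theorem in the strong form of Proposition~\ref{pnt}. Using $\Lambda(n)=\log p$ when $n=p^k$ and $\log n=k\log p$, I would rewrite
\[
\sum_{p^k\le x}\frac{1}{kp^k}=\sum_{2\le n\le x}\frac{\Lambda(n)}{n\log n}=\int_{2^-}^{x}\frac{\mathrm d\psi(t)}{t\log t};
\]
integration by parts then converts this into
\[
\frac{\psi(x)}{x\log x}+\int_{2}^{x}\psi(t)\bggg(\frac{1}{t^{2}\log t}+\frac{1}{t^{2}(\log t)^{2}}\bggg)\mathrm dt.
\]

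Next, I would substitute $\psi(t)=t+R(t)$ with $R(t)\ll t/(\log t)^{3}$ from Proposition~\ref{pnt} and separate main and error contributions. The ``$t$-part'' reduces to explicit logarithmic integrals: $\int_{2}^{x}\mathrm dt/(t\log t)=\log_2 x-\log_2 2$ and $\int_{2}^{x}\mathrm dt/(t(\log t)^{2})=1/\log 2-1/\log x$, together with the boundary term $\psi(x)/(x\log x)=1/\log x+O(1/(\log x)^{4})$. The ``$R(t)$-part'' is dominated by $\int_{2}^{x}\mathrm dt/(t(\log t)^{4})$ and $\int_{2}^{x}\mathrm dt/(t(\log t)^{5})$, both convergent on $[2,\infty)$ with tails from $x$ to $\infty$ of size $\ll 1/(\log x)^{3}$. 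Replacing these integrals by their infinite counterparts absorbs each into an absolute constant plus an $O(1/(\log x)^{3})$ error, and collecting everything produces
\[
\sum_{p^k\le x}\frac{1}{kp^k}=\log_2 x+C+\bigoo{\frac{1}{(\log x)^{3}}}
\]
for some absolute constant $C$, expressible in closed form via $\log 2$ and convergent integrals of $\psi(t)-t$.

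The final step is to identify $C=\gamma$. For this I would invoke the classical Mertens third theorem $\prod_{p\le x}(1-1/p)^{-1}=e^{\gamma}\log x\,(1+o(1))$, whose logarithm combined with the Taylor expansion $-\log(1-1/p)=\sum_{k\ge 1}1/(kp^k)$ gives $\sum_{p\le x}\sum_{k\ge 1}1/(kp^k)=\log_2 x+\gamma+o(1)$. The discrepancy with $\sum_{p^k\le x}1/(kp^k)$ is the tail $\sum_{p\le x,\,p^k>x}1/(kp^k)$, which I would bound by splitting at $p=\sqrt{x}$: the range $p>\sqrt{x}$ contributes $\ll\sum_{p>\sqrt{x}}1/p^{2}\ll 1/\sqrt{x}$, while for $p\le\sqrt{x}$ the least admissible exponent $K_p$ satisfies $K_p p^{K_p}\ge x\log x/\log p$, giving a contribution $\ll\log p/(x\log x)$ per prime and a total $\ll 1/(\sqrt{x}\log x)$. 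Hence the tail is $o(1)$, so both expansions match and $C=\gamma$.

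The main difficulty is precisely this identification: the partial-summation route only produces $C$ as a combination of $\log 2$, $1/\log 2$, and $\int_{2}^{\infty}(\psi(t)-t)/(t^{2}(\log t)^{j})\,\mathrm dt$ for $j\in\{1,2\}$, and it is not transparent that this combination equals $\gamma$. Invoking the classical Mertens third theorem as an oracle for the value of the constant is the simplest workaround; a fully self-contained identification would require relating those integrals to the Laurent expansion $\zeta(s)=1/(s-1)+\gamma+O(s-1)$ at $s=1$, which is a separate layer of analysis I would prefer to avoid here.
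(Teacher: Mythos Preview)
Your proposal is correct and follows essentially the same route as the paper's proof: partial summation against $\psi$ combined with Proposition~\ref{pnt} to obtain $\log_2 x + C + O(1/(\log x)^3)$, followed by the identification $C=\gamma$ via Mertens' third theorem and the observation that the tail $\sum_{p\le x,\,p^k>x}1/(kp^k)$ is $o(1)$. The paper compresses the partial-summation step into a single line and writes the constant identification using the identity $\sum_{p^k\le x}1/(kp^k)+\log\prod_{p\le x}(1-1/p)=-\sum_{p\le x,\,p^k>x}1/(kp^k)$, but the argument is the same as yours.
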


	\begin{proof}
		Using the Prime Number Theorem (Proposition \ref{pnt}), there exists a constant $c_0$ satisfying 
		\[
			\sum_{p^k \le x} \frac{1}{kp^k} = \int_{2^-}^x \frac{\dee \psi(t)}{t\log t} = \log_2 x + c_0 + \bigoo{\frac{1}{(\log x)^3}}.
		\]
		for $x \ge 2$. For the proof of $\gamma = c_0$, see the proof of \cite[Theorem 2.7]{MV07}.
	\end{proof}

	In the proof of Theorem \ref{thm:fact into k parts}, we need the following estimate:
	\begin{prop}
		\label{prel:partial summation bounds}		
		Uniformly for $\alpha\in[0,1]$ and $x,y\ge2$, we have
		\[
			\sum_{n \le x} \alpha^{\omega(n)} \ll x(\log x)^{\alpha-1}
			\quad \text{and} \quad \sum_{x/y<n \le xy} \frac{ \alpha^{\omega(n)}}{n} \ll (\log y)(\log(xy))^{\alpha-1} .
		\]
	\end{prop}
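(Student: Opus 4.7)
The plan is to establish the two inequalities separately, with the second reduced to the first via a case split on the size of $y$ relative to $\sqrt{x}$.

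For the first estimate, I would apply a standard upper bound for sums of non-negative multiplicative functions (Shiu's theorem, or equivalently the Hall--Tenenbaum inequality). The function $f(n)=\alpha^{\omega(n)}$ is multiplicative and satisfies $f(p^k)=\alpha\in[0,1]$ for every prime power, so these tools yield
\[
\sum_{n\le x}\alpha^{\omega(n)} \ll \frac{x}{\log x}\exp\!\Big(\alpha \sum_{p\le x}\frac{1}{p}\Big) \ll x(\log x)^{\alpha-1},
\]
where the final estimate uses Mertens's theorem and is uniform in $\alpha\in[0,1]$.

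For the second estimate, the starting point is an Euler-product comparison: since every integer $n\le t$ is $t$-smooth,
\[
\sum_{n\le t}\frac{\alpha^{\omega(n)}}{n} \le \prod_{p\le t}\Big(1+\frac{\alpha}{p-1}\Big) \ll (\log t)^\alpha
\]
uniformly in $\alpha\in[0,1]$ and $t\ge 2$, again by Mertens. If $y\ge\sqrt{x}$, then $\log(xy)\le 3\log y$, and applying this trivial bound at $t=xy$ gives the claim at once, since $(\log(xy))^\alpha=\log(xy)\cdot(\log(xy))^{\alpha-1}\ll(\log y)(\log(xy))^{\alpha-1}$. In the complementary regime $y<\sqrt{x}$, we have $x/y\ge 2$ and $\log(x/y)\ge\tfrac13\log(xy)$. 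I would then decompose the range $(x/y,xy]$ into $O(\log y)$ dyadic pieces $(2^j,2^{j+1}]$ and bound each using the first estimate:
\[
\sum_{2^j<n\le 2^{j+1}}\frac{\alpha^{\omega(n)}}{n} \le \frac{1}{2^j}\sum_{n\le 2^{j+1}}\alpha^{\omega(n)} \ll (\log 2^j)^{\alpha-1}.
\]
Since each $2^j\ge x/y$ and $u\mapsto u^{\alpha-1}$ is decreasing on $(0,\infty)$ for $\alpha\le1$, every term is $\ll(\log(x/y))^{\alpha-1}\ll(\log(xy))^{\alpha-1}$, and summing over the dyadic intervals gives the desired bound.

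The main subtlety is maintaining uniformity in $\alpha\in[0,1]$, especially near the endpoint $\alpha=0$. A more direct partial-summation approach would introduce a factor of $1/\alpha$ through the antiderivative $\int u^{\alpha-1}\,du$, which is unacceptable. The case split above circumvents this: the Euler-product comparison handles the regime where the interval $(x/y,xy]$ is short on a logarithmic scale, while the dyadic decomposition handles the long regime where all the pieces have comparable logarithmic size and the offending factor of $1/\alpha$ never materializes.
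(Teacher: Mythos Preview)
Your proposal is correct and follows essentially the same approach as the paper: cite a standard mean-value bound for the first estimate, then for the second split at $y=\sqrt{x}$, using the Euler-product comparison in the large-$y$ case and a dyadic (the paper uses $e$-adic) decomposition together with the first estimate in the small-$y$ case. One tiny quibble: the smallest dyadic block intersecting $(x/y,xy]$ may have $2^j$ just below $x/y$ rather than $2^j\ge x/y$, but since $2^j\ge x/(2y)$ this does not affect the bound.
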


\begin{proof}The first bound follows readily with Theorem 14.2 in \cite{Koukoulo19}. Let us now prove the second one. 
	
When $y\ge \sqrt{x}$, we have
\[
 \sum_{x/y<n \le xy} \frac{ \alpha^{\omega(n)}}{n}\le  \sum_{p|n\ \Rightarrow p \le y^3} \frac{ \alpha^{\omega(n)}}{n} =  \prod_{p\le y^3} \bggg(1+\frac{\alpha}{p-1}\bgg) \ll (\log y)^\alpha 
\]
by the inequality $1+t\le e^t$ and Mertens's theorem (Proposition \ref{prop:mertens}). This completes the proof in this case. Finally, assume that $y\le \sqrt{x}$. We then have
\[
 \sum_{x/y<n \le xy} \frac{ \alpha^{\omega(n)}}{n}
 	\le \sum_{\substack{m\in\ZZ \\ e^m\in[x/y,exy]}}  \sum_{n\in(e^{m-1},e^m]} \frac{ \alpha^{\omega(n)}}{n}
 	\le \sum_{\substack{m\in\ZZ \\ e^m\in[x/y,exy]}} e^{1-m} \sum_{n\le e^m} \alpha^{\omega(n)}.
\]
For every $m$ as above, the innermost sum is $\ll e^m m^{\alpha-1} \asymp e^m (\log(xy))^{\alpha-1}$ by the first part of the lemma and our assumption that $y\le \sqrt{x}$. Since there are $\le1+\log y\ll \log y$ choices for $m$, the needed estimate follows in this last case too.
\end{proof}

\section{Tools from Probability Theory}

	\subsection{The Total Variation Distance}
	
	The \textit{total variation distance} is a metric between two probability distributions. Let $\mu$ and $\nu$ be two probability measures on $\CC$. Then
	\begin{equation}
		\label{eq:dtv dfn}
		\dtv(\mu, \nu) \coloneqq \sup \left|\mu(A) - \nu(A)\right|,
	\end{equation}
	where the supremum is taken over all Lebesgue-measurable subsets of $\CC$. For any real number $a$, let
	\[
		a^+ \coloneqq \max\{a, 0\} \quad \text{and} \quad a^- \coloneqq \max\{-a, 0\}.
	\]When $\mu$ and $\nu$ are supported on $\ZZ_{\ge 1}$, here are some alternative definitions of the total variation distance:
	\begin{lem}
		\label{lem:equiv-def-dtv}
		Let $\mu$ and $\nu$ be two probability measure supported on $\NN$. Then
		\[
			\dtv(\mu, \nu) = \sum_{i \ge 1} (\mu(i) - \nu(i))^+ = \sum_{i \ge 1} (\mu(i) - \nu(i))^-
		\]
	\end{lem}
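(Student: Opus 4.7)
The plan is to prove the two equalities separately, leveraging the fact that both $\mu$ and $\nu$ are probability measures. The first equality, namely $\sum_{i\ge1}(\mu(i)-\nu(i))^+=\sum_{i\ge1}(\mu(i)-\nu(i))^-$, follows immediately from the identity $a=a^+-a^-$ applied to $a=\mu(i)-\nu(i)$ and summed over $i\ge1$: since $\sum_i\mu(i)=\sum_i\nu(i)=1$, the sum $\sum_i(\mu(i)-\nu(i))$ vanishes, giving the claim.

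For the second equality $\dtv(\mu,\nu)=\sum_{i\ge1}(\mu(i)-\nu(i))^+$, I would prove matching upper and lower bounds. For the upper bound, given any measurable $A\subseteq \CC$, note that since both measures are supported on $\NN$ we may replace $A$ by $A\cap\NN$ without affecting $\mu(A)-\nu(A)$. Then
\[
\mu(A)-\nu(A)=\sum_{i\in A\cap \NN}(\mu(i)-\nu(i))\le \sum_{i\ge1}(\mu(i)-\nu(i))^+,
\]
and symmetrically $\nu(A)-\mu(A)\le \sum_{i\ge1}(\mu(i)-\nu(i))^-$, which equals $\sum_{i\ge1}(\mu(i)-\nu(i))^+$ by the first part. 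Taking absolute values and then the supremum over $A$ yields $\dtv(\mu,\nu)\le \sum_{i\ge1}(\mu(i)-\nu(i))^+$.

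For the matching lower bound, I would exhibit an explicit maximizer: take $A^\star\coloneqq \{i\in\NN: \mu(i)>\nu(i)\}$. Then
\[
\mu(A^\star)-\nu(A^\star)=\sum_{i:\mu(i)>\nu(i)}(\mu(i)-\nu(i))=\sum_{i\ge1}(\mu(i)-\nu(i))^+,
\]
so $\dtv(\mu,\nu)\ge \sum_{i\ge1}(\mu(i)-\nu(i))^+$, completing the proof. No step here is a genuine obstacle; the only minor subtlety is remembering that the supremum in the definition of $\dtv$ is over Lebesgue-measurable subsets of $\CC$, but this reduces harmlessly to subsets of $\NN$ because both measures are discrete and supported on $\NN$.
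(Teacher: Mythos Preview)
Your proof is correct and follows essentially the same approach as the paper: both arguments use the set $A^\star=\{i:\mu(i)>\nu(i)\}$ as the explicit maximizer and the identity $a=a^+-a^-$ together with $\sum_i(\mu(i)-\nu(i))=0$ to equate the positive- and negative-part sums. The only cosmetic difference is that the paper bounds $\xi(A)$ above by $\xi(A^\star)$ and below by $\xi((A^\star)^c)$ in one stroke, whereas you split the argument into separate upper and lower bounds.
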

	
	\begin{proof}
		Let $E \coloneqq \{i \in \NN \, : \, \mu(i) > \nu(i)\}$ and let $\xi\coloneqq \mu - \nu$. Note that for any $B\subseteq \NN$, we have $\xi(B\cap E) \ge 0 \ge \xi(B\cap E^c)$. Therefore, for any $A \subseteq \NN$, we have
		\[
		\xi(A) = \xi(E) + \xi(A \cap E^c) - \xi(A^c \cap E) \le \xi(E).
		\]
		and 
		\[
		\xi(A) = \xi(E^c) + \xi(A \cap E) - \xi(A^c \cap E^c) \ge \xi(E^c)
		\]
		Therefore, $\dtv(\mu, \nu) = \sup_{A \subseteq \NN} |\xi(A)| = \max\{\xi(E), -\xi(E^c)\}$. Note that $\xi(E) = \sum_{i \ge 1} (\mu(i) - \nu(i))^+$, that $-\xi(E^c) = \sum_{i \ge 1} (\mu(i) - \nu(i))^-$ and that
		\[
		\sum_{i \ge 1} (\mu(i) - \nu(i))^+ = \sum_{i \ge 1} \bg[(\mu(i) - \nu(i)) + (\mu(i) - \nu(i))^-\bg] = \sum_{i \ge 1} (\mu(i) - \nu(i))^-.
		\]
		The lemma follows.
	\end{proof}
	
	The total variation distance will be especially useful in this paper because of the following proposition. In \cite[Section 3.8]{Arratia02}, Arratia proved that for any two random variables $X$ and $Y$ returning positive integers, we can always construct $X'$ and $Y'$ within the same probability space such that:
	\begin{itemize}
		\item $X'$ and $Y'$ have the same distribution as $X$ and $Y$, respectively;
		
		\item $Y'$ is a function of $X', U, V$ where $(U, V)$ is a point uniformly chosen in the unit square independent of $X'$;
		
		\item $\PP[X' \ne Y'] = \dtv(X, Y)$. 
	\end{itemize}
	
	We repeat his proof here. Let $\mu$ and $\nu$ be two probability measures supported on $\NN$. Let $z_j \coloneqq \sum_{i \le j} \frac{(\mu(i) - \nu(i))^-}{\dtv(\mu, \nu)}$ (with $z_0 \coloneqq 0$). We consider the function $f_{\mu, \nu}\colon \NN \times (0, 1)^2 \to \NN$ defined as
	\[
		f_{\mu, \nu}(m; a, b) \coloneqq \begin{cases}
				m &\text{if $a \cdot \mu(m) \le \nu(m)$,} \\
				\sum_{i \ge 1} i \cdot \one_{z_{i-1} < b \le z_i} &\text{otherwise.}
			\end{cases}
	\]
		This is the function used in Section \ref{sec:coupling} for the extraction of $N_x$. 
	Note that if $a,b\in(0,1)$ and $m\in\NN$, then we have the equivalency 
	\begin{equation}
		\label{eq:f TV property}
		f_{\mu, \nu}(m; a, b) \ne m\quad\iff\quad a\cdot \mu(m) > \nu(m).
	\end{equation}
	Indeed, the direction ``$\Rightarrow$'' is obvious. To see the converse direction, note that if $a\cdot\mu(m)>\nu(m)$, then $(\mu(m)-\nu(m))^-=0$, and hence the interval $(z_{m-1},z_m]$ is empty. 

	\begin{lem}[Arratia, \cite{Arratia02}]
		\label{lem:coupling-dtv}
		Let $\mu$ and $\nu$ be two probability measures supported on $\NN$, let $X$ be a random variable with law $\mu$, and let $U$ and $U'$ be two uniform random variables in $(0, 1)$ such that $X, U, U'$ are independent. Let $Y \coloneqq f_{\mu, \nu}(X; U, U')$ with $f_{\mu, \nu}$ defined as above. Then $\PP[X \ne Y] = \dtv(\mu, \nu)$ and $\PP[Y \in A] = \nu(A)$.
	\end{lem}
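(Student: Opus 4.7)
The plan is to condition on the value of $X$ and exploit the independence of $X$, $U$, $U'$, using the key equivalency \eqref{eq:f TV property}. I would first assume without loss of generality that $\dtv(\mu, \nu) > 0$ (otherwise $\mu = \nu$ and, by \eqref{eq:f TV property}, $Y = X$ almost surely, so both assertions are trivial).

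For the first claim, the equivalency \eqref{eq:f TV property} gives $\{X \ne Y\} = \{U \cdot \mu(X) > \nu(X)\}$. Conditioning on $X = m$ and using that $U$ is uniform on $(0,1)$ and independent of $X$, we have
\[
\PP[X \ne Y \mid X = m] = \PP\bg[U \cdot \mu(m) > \nu(m)\bg] = \bg(1 - \nu(m)/\mu(m)\bg)^+
\]
whenever $\mu(m) > 0$, so
\[
\PP[X \ne Y] = \sum_{m \ge 1} \mu(m)\cdot \bg(1 - \nu(m)/\mu(m)\bg)^+ = \sum_{m \ge 1}(\mu(m) - \nu(m))^+ = \dtv(\mu, \nu),
\]
where the last equality is Lemma \ref{lem:equiv-def-dtv}.

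For the second claim, I would fix $k \in \NN$ and split $\PP[Y = k]$ according to whether $X = k$ or $X \ne k$. When $X = k$, the equivalency gives $Y = k$ iff $U\cdot \mu(k) \le \nu(k)$, so this case contributes $\mu(k) \cdot \min(\nu(k)/\mu(k), 1) = \min(\mu(k), \nu(k))$. When $X = m \ne k$, we need $U \cdot \mu(m) > \nu(m)$ together with $z_{k-1} < U' \le z_k$; by independence and the definition of $z_j$, this contributes
\[
\sum_{m \ne k} \mu(m) \cdot \bg(1 - \nu(m)/\mu(m)\bg)^+ \cdot \frac{(\mu(k) - \nu(k))^-}{\dtv(\mu, \nu)} = \frac{(\mu(k) - \nu(k))^-}{\dtv(\mu,\nu)} \sum_{m \ne k} (\mu(m) - \nu(m))^+.
\]
Since $(\mu(k) - \nu(k))^+ \cdot (\mu(k) - \nu(k))^- = 0$, the missing $m = k$ term in the sum is zero when multiplied by the prefactor, so the sum over $m \ne k$ can be extended to all $m$, yielding $(\mu(k) - \nu(k))^-$ by Lemma \ref{lem:equiv-def-dtv}. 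Adding the two contributions gives $\PP[Y = k] = \min(\mu(k), \nu(k)) + (\mu(k) - \nu(k))^- = \nu(k)$ by a trivial case analysis on the sign of $\mu(k) - \nu(k)$.

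There is no real obstacle here: the proof is a bookkeeping exercise with the positive and negative parts of $\mu - \nu$. The one point that requires attention is checking that the edge case $\mu(m) = 0$ is handled consistently (such $m$ contribute zero to every sum, and the convention $0 \cdot a \le \nu(m)$ makes $f_{\mu,\nu}(m;a,b) = m$ for these values, but these $m$ never arise as values of $X$), and the zero-TV case noted at the start.
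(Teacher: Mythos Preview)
Your proof is correct and follows essentially the same approach as the paper's. Both arguments use the equivalency \eqref{eq:f TV property} to identify $\{X\ne Y\}$ and then compute $\PP[Y=k]$ by splitting into the two branches of $f_{\mu,\nu}$; your partition by $\{X=k\}$ versus $\{X\ne k\}$ differs only cosmetically from the paper's partition by which branch is taken, and your observation that $(\mu(k)-\nu(k))^+\cdot(\mu(k)-\nu(k))^-=0$ is precisely what the paper uses implicitly when it allows the second event to include $X=n$.
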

	
	\begin{proof}
		Using \eqref{eq:f TV property}, we find that 
		\begin{equation}
			\label{eq:event X neq Y}
			\{X \ne Y\}=\{U \cdot \mu(X) > \nu(X)\}. 
		\end{equation}
		Furthermore, we directly compute that 
		\begin{equation}
			\label{eq:prob X eq m and U large}
			\PP\bg[U\cdot \mu(m)> \nu(m), X = m\bg] = (\mu(m) - \nu(m))^+.
		\end{equation}
		Therefore,
		\begin{equation}
			\label{eq:prob X neq Y}
			\PP[X \ne Y] = \PP[U \cdot \mu(X) > \nu(X)] = \sum_{m \ge 1} (\mu(m) - \nu(m))^+ = \dtv(\mu, \nu)
		\end{equation}
		with Lemma \ref{lem:equiv-def-dtv}.
		
		Next, we prove that $\PP[Y = n] = \nu(n)$ for any $n \in \NN$. Note that $Y = n$ if, and only if, one of two disjoint events happen: either we have $U \cdot \mu(n)\le \nu(n)$ with $X = n$, or we have $U\cdot \mu(X) > \nu(X)$ with $z_{n-1} < U' \le z_n$. Therefore, with \eqref{eq:prob X eq m and U large} and \eqref{eq:prob X neq Y}, we have
		\begin{align*}
			\PP[Y = n] 
			&= \PP\bg[U \cdot \mu(n)\le \nu(n), X = n\bg]  + \PP\bg[U\cdot \mu(X) > \nu(X)\bg] \cdot \PP\bg[z_{n-1} < U' \le z_n\bg] \\
			&= \bgg(\mu(n) - \bg(\mu(n) - \nu(n)\bg)^+\bgg) + \dtv(\mu, \nu) \cdot \frac{(\mu(n) - \nu(n))^-}{\dtv(\mu, \nu)} = \nu(n),
		\end{align*}
		where we used \eqref{eq:event X neq Y} and \eqref{eq:prob X neq Y} to show that $\PP\bg[U\cdot \mu(X) > \nu(X)\bg]=\dtv(\mu,\nu)$. 
		This concludes the proof of the lemma.
	\end{proof}

	\subsection{Poisson Processes}
	
	The following definition and propositions are borrowed from Kingman's book on Poisson processes \cite{Kingman93}. Let $(S, \mathcal S)$ be a measurable space with $S$ being a subset of $\RR^d$ for some $d \ge 1$. A \textit{Poisson process} on a state space $S$ with mean measure $\mu$ is a random countable subset $\Pi \subseteq S$ such that:
	\begin{itemize}
		\item for any disjoint measurable subsets $A_1, \ldots, A_n$ of $S$, the random variables $\#(\Pi \cap A_1)$, \dots, $\#(\Pi \cap A_n)$ are independent;
		
		\item the random variable $\#(\Pi \cap A)$ is a Poisson random variable of parameter $\mu(A)$ for any $A\subset S$ measurable.
	\end{itemize}
	We can see $\Pi$ as an element of the measurable space $(\Omega_S, \mathcal F_S)$ where $\Omega_S$ is the set of countable subsets of $S$ and $\mathcal F_S$ is the smallest $\sigma$-algebra for which the map $\Pi \mapsto \#\{\Pi \cap B\}$ is measurable for all $B \in \mathcal S$. If $\mu$ has no atoms, meaning no singleton with positive probability, and is $\sigma$-finite, meaning that $S$ is a countable union of measurable sets with finite measure, then a Poisson process with mean measure $\mu$ always exists (see \cite{Kingman93}, the Existence Theorem in section 2.5 for a proof). If $\mu$ is absolutely continuous with respect to the Lebesgue measure, then the function $\lambda \colon S \to \RR_{\ge 0}$ such that $\mu(A) = \riemint{A}{}{\lambda(x)}{x}$ for all measurable subsets $A \subseteq S$ is called the \textit{intensity} of the Poisson process. Here are a few important propositions about Poisson processes that we will use in the paper and proved in \cite{Kingman93}:
	\begin{prop}[Mapping Theorem, \cite{Kingman93} section 2.3]
		\label{prel:map}
		If $\Pi$ is a Poisson process with mean measure $\mu$ on $S$, and $f:S \to T$ is a measurable function such that $\mu^*(B) \coloneqq \mu(f^{-1}(B))$ has no atoms, then $f(\Pi)$ is a Poisson process on $T$ with measure $\mu^*$.
	\end{prop}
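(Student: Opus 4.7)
The plan is to verify directly the two defining properties of a Poisson process for the random set $f(\Pi)$: independence of counts on disjoint measurable subsets of $T$, and the Poisson distribution of each individual count. The key observation that drives the argument is the identity
\[
\#\bg(f(\Pi)\cap B\bg) = \#\bg(\Pi\cap f^{-1}(B)\bg),
\]
valid for every measurable $B\subseteq T$ \emph{provided that $f$ is injective on $\Pi$}. The no-atoms hypothesis on $\mu^*$ will be used precisely to guarantee that this injectivity holds almost surely, so that the point-counting on $T$ faithfully reflects the point-counting on $S$.

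Granting this injectivity for the moment, the two Poisson-process properties transfer easily. Given disjoint measurable sets $B_1,\dots,B_n\subseteq T$, the preimages $f^{-1}(B_1),\dots,f^{-1}(B_n)$ are pairwise disjoint measurable subsets of $S$, and applying the definition of a Poisson process to $\Pi$ yields that the counts $\#(\Pi\cap f^{-1}(B_i))$ are independent Poisson random variables with parameters $\mu(f^{-1}(B_i))=\mu^*(B_i)$. Combined with the identity above, this gives exactly the required distributional properties for $f(\Pi)$.

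To establish almost-sure injectivity, I would use the $\sigma$-finiteness of $\mu$ (built into the definition of a Poisson process used in the paper) to write $S$ as a countable disjoint union of measurable sets $A_1,A_2,\ldots$ with $\mu(A_i)<\infty$. Conditioning on the independent finite counts $\#(\Pi\cap A_i)=k_i$, the points of $\Pi$ lying inside $A_i$ are i.i.d.\ with law $\mu|_{A_i}/\mu(A_i)$, so their images under $f$ are i.i.d.\ with the push-forward of this law. This push-forward is dominated by $\mu^*/\mu(A_i)$ and therefore inherits the no-atoms property. Since finitely many i.i.d.\ draws from a non-atomic distribution on $T$ are almost surely pairwise distinct (each pair collides with probability $\int \mu^*(\{t\})\,\mu^*(\mathrm dt)/\mu^*(A_i)^2 = 0$), no collisions occur within a single $A_i$; applying the same argument to the joint distribution over pairs $(i,j)$ with $i\ne j$ rules out cross-collisions as well, and a countable union completes the injectivity claim.

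The main obstacle is this last injectivity step: translating the pointwise no-atoms condition on $\mu^*$ into almost-sure injectivity of $f$ on the random and typically infinite set $\Pi$ requires precisely the i.i.d.\ conditional structure of Poisson processes on finite-measure regions. Once injectivity is secured, the rest of the argument is mechanical bookkeeping about preimages of disjoint sets.
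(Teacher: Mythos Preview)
The paper does not actually prove this proposition: it is stated in the appendix with the header ``proved in \cite{Kingman93}'' and no argument is given beyond the citation to Kingman's book. Your sketch is correct and is essentially the standard proof one finds there --- first securing almost-sure injectivity of $f$ on $\Pi$ via the no-atoms assumption on $\mu^*$, then reading off the count identity $\#(f(\Pi)\cap B)=\#(\Pi\cap f^{-1}(B))$ and transferring the independence and Poisson-distribution properties through preimages.

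Two small points of care. First, $\sigma$-finiteness of $\mu$ is not literally part of the paper's definition of a Poisson process (it is mentioned only for the existence theorem), so if you want to be precise you should either add it as a standing hypothesis or note that the processes actually used in the paper all have $\sigma$-finite mean measures. Second, in your collision calculation the expression $\mu^*(A_i)$ is a type error since $A_i\subseteq S$ while $\mu^*$ lives on $T$; you mean the push-forward $f_*(\mu|_{A_i})/\mu(A_i)$, which is indeed dominated by $\mu^*/\mu(A_i)$ and hence non-atomic. Neither issue affects the substance of the argument.
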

	
	\begin{prop}[Colouring Theorem, \cite{Kingman93} section 5.1]
		\label{prel:colour}
		If $\Pi$ is a Poisson process with mean measure $\mu$ on $S$, and the points are randomly coloured with $k$ colours such that the probability of a point receiving the colour $i$ is $p_i$, and such that the colour of a point is independent of different points and of the position of the point. Let $\Pi_i$ be the subset of $\Pi$ with colour $i$. Then all the $\Pi_i$ are independent Poisson processes with mean measures $\mu_i = p_i\mu$.
	\end{prop}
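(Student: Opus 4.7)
The plan is to verify the Colouring Theorem directly from the definition of a Poisson process by computing finite-dimensional distributions of the counts $N_i(A) \coloneqq \#(\Pi_i \cap A)$ and then bootstrapping to joint independence of the processes $\Pi_1,\dots,\Pi_k$. The key algebraic fact driving everything is that if one first samples a Poisson number of points and then independently assigns each of them a colour, the resulting colour counts are independent Poisson random variables. I would state and prove this as a lemma first, since it is the only non-trivial combinatorial ingredient.

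The first step is the single-set calculation. Fix a measurable $A\subseteq S$ with $\mu(A)<\infty$. Conditional on $N(A)\coloneqq\#(\Pi\cap A)=n$, the independence of the colouring implies that $(N_1(A),\dots,N_k(A))$ is multinomial with parameters $(n;p_1,\dots,p_k)$. Since $N(A)$ is Poisson with mean $\mu(A)$, a direct computation gives
\[
\PP[N_1(A)=n_1,\dots,N_k(A)=n_k] = e^{-\mu(A)}\frac{\mu(A)^{n_1+\cdots+n_k}}{(n_1+\cdots+n_k)!}\cdot \binom{n_1+\cdots+n_k}{n_1,\dots,n_k}\prod_{i=1}^k p_i^{n_i} = \prod_{i=1}^k e^{-p_i\mu(A)}\frac{(p_i\mu(A))^{n_i}}{n_i!},
\]
which is precisely the joint law of independent Poissons with means $p_i\mu(A)$.

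The second step extends this to any finite collection of pairwise disjoint measurable sets $A_1,\dots,A_m$. Because $\Pi$ is a Poisson process, the counts $N(A_1),\dots,N(A_m)$ are independent, and the colouring is performed independently across points, so the families $\{(N_i(A_j))_{i=1}^k\}_{j=1}^m$ are mutually independent. Applying the single-set identity to each $A_j$ separately shows that the $km$ random variables $N_i(A_j)$ are jointly independent Poissons with means $p_i\mu(A_j)$. In particular, for each fixed $i$ the counts $N_i(A_1),\dots,N_i(A_m)$ are independent Poissons with means $p_i\mu(A_j)$, which is the defining property of a Poisson process with mean measure $p_i\mu$; moreover, the full independence of the array across both indices gives independence of the sub-processes when evaluated on disjoint sets.

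The third and final step is the upgrade from finite-dimensional independence to independence of the $\Pi_i$ as random elements of $(\Omega_S,\mathcal F_S)$. Since $\mathcal F_S$ is generated by the count maps $\Pi\mapsto \#(\Pi\cap B)$ for $B\in\mathcal S$, it suffices to check independence on the $\pi$-system of finite collections of disjoint measurable sets, which is exactly what Step 2 supplies; a standard monotone class / Dynkin argument then extends this to all of $\mathcal F_S$. The main conceptual obstacle, and the only non-routine part of the proof, is precisely this last passage from cylinder events to the full product $\sigma$-algebra; however, under the $\sigma$-finiteness hypothesis on $\mu$ one may reduce everything to a countable exhaustion of $S$ by sets of finite measure, after which the monotone class argument is completely standard.
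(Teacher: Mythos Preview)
Your argument is correct and is essentially the standard proof of the Colouring Theorem: condition on the total count in a set to reduce to a multinomial, use the Poisson--multinomial identity to factor the joint law into independent Poissons, extend to disjoint collections using the independence built into the definition of a Poisson process, and then pass to the full $\sigma$-algebra by a $\pi$--$\lambda$ argument. Each step is sound as written.

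Note, however, that the paper does not give its own proof of this proposition: it is stated in the appendix purely as a quoted result with a reference to \cite{Kingman93}, Section 5.1, and no argument is supplied. So there is no ``paper's own proof'' to compare against; your write-up simply fills in what the paper leaves to the reference, and it does so in the same spirit as Kingman's treatment.
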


	\begin{prop}[Campbell's Theorem, \cite{Kingman93} section 3.2]
		\label{prel:campbell}
		Let $\Pi$ be any Poisson process on $S$ with mean measure $\mu$. Let $f\colon S \to \RR$ be a measurable function. Then 
		\[
		\Sigma = \sum_{X \in \Pi} f(X)
		\]
		is absolutely convergent almost surely if, and only if, 
		\[
		\riemint{S}{}{\min\{f, 1\}}{\mu} < \infty.
		\]
		If this condition holds, then
		\[
		\EE[e^{s\Sigma}] = \exp\left(\riemint{S}{}{(e^{sf} - 1)}{\mu}\right)
		\]
		for any complex $s$ for which the integral converges. Moreover,
		\[
		\EE[\Sigma] = \riemint{S}{}{f}{\mu}
		\]
		if the integral converges. In the case where it converges, we also have
		\[
		\vard[\Sigma] = \riemint{S}{}{f^2}{\mu}.
		\]
	\end{prop}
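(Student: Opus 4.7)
The plan is to first reduce to the case where $\mu$ is a finite measure, say $\mu(S) = \lambda < \infty$, by exploiting $\sigma$-finiteness. In the finite-measure case, one constructs $\Pi$ concretely: let $N$ be a Poisson random variable of parameter $\lambda$, and, independently, let $X_1, X_2, \ldots$ be i.i.d.\ random variables in $S$ with common law $\mu/\lambda$; then $\Pi \coloneqq \{X_1, \ldots, X_N\}$ satisfies the defining properties of a Poisson process on $S$ with mean measure $\mu$ (this is a standard construction that can be checked by computing the joint distribution of $\#(\Pi \cap A_1), \ldots, \#(\Pi \cap A_n)$ for disjoint measurable sets). Once we have this representation, we simply compute
\[
\EE[e^{s\Sigma}] = \EE\bgg[\prod_{i=1}^N e^{sf(X_i)}\bgg] = \sum_{n \ge 0} \frac{e^{-\lambda}\lambda^n}{n!} \bgg(\frac{1}{\lambda}\int_S e^{sf}\dee\mu\bgg)^n = \exp\bgg(\int_S (e^{sf}-1)\dee\mu\bgg),
\]
valid for all complex $s$ for which the integral on the right converges (both sides being analytic in $s$ in the natural strip of convergence). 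This establishes the moment generating function identity in the finite case.

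To pass to the $\sigma$-finite case, I would write $S = \bigsqcup_{n \ge 1} S_n$ with $\mu(S_n) < \infty$. The restrictions $\Pi_n \coloneqq \Pi \cap S_n$ are independent Poisson processes with mean measures $\mu|_{S_n}$ (this follows directly from the definition). Writing $\Sigma = \sum_n \Sigma_n$ with $\Sigma_n = \sum_{X \in \Pi_n} f(X)$ and using independence, the finite-case identity multiplies over $n$ to give
\[
\EE[e^{s\Sigma}] = \prod_n \exp\bgg(\int_{S_n}(e^{sf}-1)\dee\mu\bgg) = \exp\bgg(\int_S (e^{sf}-1)\dee\mu\bgg),
\]
whenever the right-hand integral converges, where the interchange of product and limit is justified by monotone or dominated convergence applied to partial sums of the absolutely convergent series. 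The formulas $\EE[\Sigma] = \int f \dee\mu$ and $\vard[\Sigma] = \int f^2\dee\mu$ are then obtained by differentiating the moment generating function at $s = 0$ (or, alternatively, by taking $f \ge 0$ and using monotone convergence to deduce the linear formula, then polarizing for the quadratic one).

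For the convergence criterion, I would split $f = f\one_{|f| > 1} + f\one_{|f| \le 1}$ and treat each piece separately. The number of points $X \in \Pi$ with $|f(X)| > 1$ is Poisson of parameter $\mu(\{|f| > 1\})$, hence is almost surely finite iff this parameter is finite, so the corresponding contribution to $\Sigma$ is automatically absolutely convergent in that case. For the remaining sum $\Sigma' = \sum_{X \in \Pi}f(X)\one_{|f(X)|\le 1}$, the finite-measure computation above (applied with $sf$ replaced by, say, $|f|\one_{|f|\le 1}$) yields $\EE[e^{|\Sigma'|}] \le \exp(\int \min\{|f|,1\}(e-1)\dee\mu)$ up to constants, giving a.s.\ absolute convergence when $\int \min\{|f|,1\}\dee\mu < \infty$; the converse uses that a sum of independent nonnegative random variables converges a.s.\ iff its expectation is finite (Kolmogorov's three-series theorem, or a direct moment generating function argument).

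The main subtlety, I expect, will be justifying the interchanges of sum and integral (and the product over the pieces $S_n$) in the $\sigma$-finite reduction without assuming the integrand is nonnegative, which is why I would first prove the result for nonnegative $f$ using monotone convergence, deduce the convergence criterion in that setting, and only then extend to general $f$ via the positive/negative part decomposition inside the set $\{|f| \le 1\}$.
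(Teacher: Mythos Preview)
The paper does not give its own proof of this proposition; it simply cites Kingman's book \cite{Kingman93}, section~3.2, as the source. Your outline is essentially the standard argument one finds there: construct the process explicitly in the finite-measure case as $\{X_1,\ldots,X_N\}$ with $N$ Poisson and $X_i$ i.i.d., compute the moment generating function by conditioning on $N$, and then pass to the $\sigma$-finite case by superposition of independent pieces. So your approach matches the referenced proof and there is nothing further to compare.

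One small remark: the convergence criterion as stated in the paper reads $\int \min\{f,1\}\,\mathrm d\mu < \infty$, but for signed $f$ the correct condition is $\int \min\{|f|,1\}\,\mathrm d\mu < \infty$, which is indeed what you use in your argument.
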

	
	Many probabilities or expectations that involve Poisson processes in this paper can be reformulated as 
	\[
	\EE \sum_{X \in \Pi} f(\Pi\setminus \{X\}, X).
	\]
	In these cases, there is a generalization of the formula for $\EE[\Sigma]$ in Campbell's Theorem, called the Mecke equation, allowing us to compute these objects:
	\begin{prop}[Mecke equation, \cite{LastPenrose18} Theorem 4.5]
		\label{prel:mecke}
		Let $\Pi$ be a Poisson process on $S$ with a $\sigma$-finite mean measure $\mu$, and let $f: \Omega_S \times S^k \to [0, \infty)$ be measurable. Then we have
		\[
		\EE \sum_{\substack{X_1, \ldots, X_k \\ \text{all distinct}}} f(\Pi\setminus \{X_1, \ldots, X_k\}; X_1, \ldots, X_k) = \int_S \cdots \int_S \EE[f(\Pi; x_1, \ldots, x_k)] \, \mathrm d\mu(x_1)\, \cdots\, \mathrm d\mu(x_k).
		\]
	\end{prop}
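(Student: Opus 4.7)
The plan is to establish Mecke's identity by reducing to the fundamental case $k=1$ and then iterating. The $k=1$ case crystallizes the probabilistic content: a point $X$ of $\Pi$ chosen from a bounded region $B$ behaves, after its removal, like a fresh realization of $\Pi$ itself. To make this precise, I would first prove the identity for integrands of the product form $f(\pi;x)=F(\pi)\one_B(x)$, where $B\subseteq S$ satisfies $\mu(B)<\infty$ and $F\colon \Omega_S\to\RR_{\ge 0}$ is bounded measurable (say, the indicator of a cylinder event). The structural input is that $\Pi$ decomposes as a union $\Pi_B\cup \Pi_{B^c}$ of two independent Poisson processes, and conditional on $\#\Pi_B=n$, the points of $\Pi_B$ are i.i.d.\ with law $\mu(\cdot\cap B)/\mu(B)$. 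By symmetry, the conditional expectation of $\sum_{X\in\Pi_B}F(\Pi\setminus\{X\})$ given $\#\Pi_B=n$ equals $n\cdot\EE[F(\{Y_1,\ldots,Y_{n-1}\}\cup\Pi_{B^c})]$.

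Weighting by the Poisson probabilities $e^{-\mu(B)}\mu(B)^n/n!$, the factor $n$ absorbs a factorial and re-indexes the sum so that the $(n-1)$-point configuration becomes itself a Poisson sample of the same parameter. The total collapses to $\mu(B)\cdot\EE[F(\Pi)]$, which matches $\int_B\EE[F(\Pi)]\dee\mu(x)$. From this base case, the usual measure-theoretic machinery takes over: linearity handles finite linear combinations of product cylinders, the monotone class theorem extends the identity to all bounded measurable $f$, monotone convergence removes boundedness, and $\sigma$-finiteness of $\mu$ allows one to approximate $S$ by an increasing sequence of sets of finite $\mu$-measure before a final monotone convergence.

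For $k\ge 2$, I would induct on $k$: fixing $x_1,\ldots,x_{k-1}$, treat the inner sum over $X_k\in\Pi\setminus\{X_1,\ldots,X_{k-1}\}$ as a $k=1$ application to the function $\pi\mapsto f(\pi;x_1,\ldots,x_{k-1},\cdot)\one_{x_k\notin\{x_1,\ldots,x_{k-1}\}}$, with the ``all distinct'' constraint absorbed as an indicator in the integrand. Fubini, justified by nonnegativity, unpacks the iterated integral into the claimed $k$-fold integral against $\mu^{\otimes k}$, and the indicator factor integrates to $1$ since $\mu$ is $\sigma$-finite and the diagonals are $\mu^{\otimes k}$-null whenever $\mu$ is non-atomic (which is the relevant case here, since the paper's Poisson processes are all given by continuous intensities).

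The main obstacle is bookkeeping rather than conceptual depth: the probabilistic heart of the argument is the symmetrization-plus-Poisson-resummation step in the base case, which itself rests on the standard conditional description of $\Pi$ given its point count on a bounded set. Care is required at two points, namely justifying the measurability of the inner expectations in the outer variables (needed to apply Fubini in the inductive step), and making sure the extension by monotone class is applied to an algebra that generates the full $\sigma$-algebra $\mathcal F_S$ on $\Omega_S$ so that the identity transfers to all of $\mathcal F_S\otimes \mathcal S^{\otimes k}$. Neither of these obstructs the strategy, and together they give the full proposition.
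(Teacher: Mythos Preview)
The paper does not prove this proposition: it is quoted from Last--Penrose \cite{LastPenrose18} as a black box and no argument is given. Your sketch is essentially the standard proof found there, so there is nothing in the paper to compare against.

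Your outline is correct in substance. One small point: in the inductive step you do not need the indicator $\one_{x_k\notin\{x_1,\dots,x_{k-1}\}}$ or any non-atomicity hypothesis. The clean induction peels off $X_1$ first: writing the left side as $\EE\big[\sum_{X_1\in\Pi} g(\Pi\setminus\{X_1\};X_1)\big]$ with $g(\pi;x_1)=\sum_{X_2,\dots,X_k\in\pi\ \text{distinct}} f(\pi\setminus\{X_2,\dots,X_k\};x_1,X_2,\dots,X_k)$, the $k=1$ case gives $\int_S \EE[g(\Pi;x_1)]\,\mathrm d\mu(x_1)$, and the inductive hypothesis applied to $g(\Pi;x_1)$ for each fixed $x_1$ yields the $k$-fold integral directly. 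The ``all distinct'' constraint is absorbed into the structure of the nested sums, so no diagonal set ever appears and the formula holds even when $\mu$ has atoms.
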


\end{document}